\def\vartheta{Z}
\newtheorem{theorem}{Theorem}[section]
\newtheorem{lemma}[theorem]{Lemma}
\def\Id{\mathrm{Id}}
\def\diag{\mathrm{diag}}
\def\eps{\varepsilon}
\numberwithin{equation}{section}
\def\calH{\mathcal{H}}
\def\calM{\mathcal{M}}
\def\N{\mathbb{N}}
\def\R{\mathbb{R}}
\def\Z{\mathbb{Z}}
\DeclareMathOperator{\dist}{dist}
\DeclareMathOperator{\SO}{SO}
\DeclareMathOperator{\OO}{O}
\newcommand{\vii}[2]{\ensuremath{\begin{pmatrix}#1 \\ #2 \end{pmatrix}}}
\newcommand{\mii}[4]{\ensuremath{\begin{pmatrix}#1&#2 \\ #3&#4 \end{pmatrix}}}
\newcommand{\norm}[1]{\ensuremath{|| #1 ||}}
\begin{document}
\begin{center}
{ \LARGE
Energy scaling and branched microstructures in a model for
  shape-memory   alloys with   $\SO(2)$ invariance
 \\[5mm]}
{October 20, 2014}\\[5mm]
Allan Chan and Sergio Conti\\
{\em  Institut f\"ur Angewandte Mathematik,
Universit\"at Bonn\\ 53115 Bonn, Germany }
\\[3mm]
\begin{minipage}[c]{0.8\textwidth}\small
Domain branching near the boundary  appears in many
singularly perturbed models for microstructure in materials and was first
demonstrated  mathematically
by Kohn and Müller for a scalar problem representing the elastic behavior of
shape-memory alloys. 
We study here a model for shape-memory alloys based on
the full vectorial problem of nonlinear elasticity, including
 invariance under rotations, in the case of two wells in two dimensions. 
We show that, for two wells with two rank-one connections, the energy
is proportional to the power $2/3$ of the surface energy, in agreement
with the scalar model. In a case where only one rank-one
connection is present, we show that the energy exhibits a different behavior, 
proportional to the power $4/5$ of the surface energy. This lower energy is
achieved by a suitable interaction of the two components of the deformations
and hence cannot be reproduced by the scalar model. Both scalings are proven
by explicit constructions and matching lower bounds.
\end{minipage}
\end{center}

%%%%%%%%%%%%%%%%%%%%%%%%%%%%%%%%%%%%%%%%%%%%%%%%%%

\section{Introduction}
The peculiar elastic properties of shape-memory alloys have attracted a large
interest in the mechanics literature, as discussed for example in \cite{B9_Bhatta,OtsukaWayman1999},
starting with the works 
of Ball and James \cite{B9_BallJames87,B9_BallJames92} also in the vectorial
calculus of variations, see also \cite{ChipotKinderlehrer88,Dacorogna1989,MuellerLectureNotes} and the 
references therein.  
The typical model, based on nonlinear elasticity, takes the form 
\begin{equation}\label{eqfctnonlineaelast}
\int_\Omega W(Du) dx\,,
\end{equation}
where $u:\Omega\to\R^n$ is the elastic deformation. The energy density 
 $W:\R^{n\times n}\to\R$  is
minimized by several copies of the set of proper rotations $\SO(n)$, which
correspond to the several phases of the material. The macroscopic material
behavior can be analyzed using the theory of relaxation, which -- under
appropriate continuity and growth conditions -- leads to the study of the
quasiconvex envelope of $W$ \cite{Dacorogna1989,MuellerLectureNotes}. Partial
results in this direction have been obtained for example in   
\cite{B9_BallJames92,Sverak1993,DolzmannLN,DolzmannKirchheim2003,ContiDolzmannKirchheim2007,ContiDolzmanntwowell}. 
The theory of relaxation does not, however, make 
any prediction on the length scale of the microstructure, as  the
functional (\ref{eqfctnonlineaelast}) is scale invariant. 

A more precise analysis of the microstructure is possible if one includes a
singular perturbation, typically of the form $\varepsilon|D^2u|(\Omega)$ (in the
sense of the total variation of the gradient of the $BV$ function $Du$) or
$\varepsilon^2 \|D^2u\|_{L^2}^2$, where $\varepsilon$ represents a (scaled)
surface energy. A scalar simplification of this type of model  was
proposed by Kohn and Müller in 1992
\cite{B9_KohnMuller92,B9_KohnMuller94}. They have shown that the minimum 
energy scales as $\varepsilon^{2/3}$, and that this scaling is achieved by a
self-similar microstructure, which refines close to the boundary. Finer results
were later obtained in \cite{B9_Conti00,B9_Conti2006,B9_Zwicknagl}, including
in particular the statement of 
asymptotic self-similarity for the minimizers of the Kohn-Müller functional.
Their approach was extended to many other physical problems, including for
example micromagnetism \cite{CK98,CKO99,Viehmanndiss}, magnetic structures in
superconductors \cite{B9_CKO2004,B9_CCKO2008}, dislocation structures in
plasticity \cite{B9_ContiOrtiz05}, coarsening in thin film growth
\cite{ContiOrtiz2008}. 

We consider in this work the full vectorial problem of nonlinear elasticity with
surface energy, namely, 
 the functional
\begin{equation}\label{intro:1}
  \int_{\Omega}W(Du)\,dx + \varepsilon |D^2u|(\Omega)\,.
\end{equation}
Here, $u:\Omega\subset\R^n\rightarrow\R^n$ represents the elastic deformation,
$W:\R^{n\times n}\rightarrow\R$ the energy density, which is minimized on a
set  $K\subset\R^{n\times n}$ and grows quadratically. For definiteness, we
shall focus on
\begin{equation}\label{intro:2}
 W(F) = \dist^2(F,K)= \inf\{|F-G|^2: \ G\in K\}\,,
\end{equation}
although all results hold for any energy with the same set of minimizers and
the same growth. The small positive parameter $\varepsilon$ in
(\ref{intro:1}) represents the surface energy, and the admissible 
deformations are those $u\in
W^{1,2}(\Omega;\R^n)$ with $Du\in BV(\Omega;\R^{n\times n})$ which obey
specific boundary conditions. By $|D^2u|(\Omega)$ we denote the total
variation of the measure $D^2u$ over the set $\Omega$.
The set $K\subset\R^{n\times n}$ of energy-minimizing deformation
gradients, as used in (\ref{intro:2}), 
depends on the crystallography of the  phase transformation 
considered. 

In this paper we focus on the two-dimensional case, $n=2$, and study 
 two different crystallographic settings.
The first situation we address, denoted case $K_1$, is 
\begin{equation}\label{assumption:K1}
 A_1= \mii{1}{-\alpha}{0}{1}, \quad B_1=\mii{1}{\alpha}{0}{1}, \quad K_1=\SO(2)A_1\cup\SO(2)B_1
\end{equation}
for some $\alpha>0$.
These matrices have the same determinant and are representative of
volume-preserving phase transformations in which the two wells have two rank-one
connections, in the sense that $\det(A_1-QB_1)=0$ has two solutions $Q\in
\SO(2)$. This is the vectorial situation that was represented by
the  scalar model of Kohn and Müller; indeed, one can obtain a model similar
to the one they used if 
$K_1$ is reduced to the two matrices $\{A_1, B_1\}$, see
\cite{B9_Zwicknagl}. The two components of $u$ are in this case 
decoupled, only the first one gives a nontrivial energy contribution. 

We show that for this problem the energy is  proportional to
$\varepsilon^{2/3}$ if the aspect ratio of
the domain is of order 1 (Theorem \ref{Theorem:K1}),
thereby extending the Kohn-Müller result to the present vectorial model. 
The presence of a second rank-one
connection becomes, however, important in the case of long and thin domains, in
which microstructure 
is generated along vertical oscillations instead of horizontal ones, as we
discuss after Theorem \ref{Theorem:K1} and in Section \ref{secubtheo1} below.

The second situation we consider, denoted case $K_2$, is
\begin{equation}\label{assumption:K2}
A_2=\mii{1}{0}{0}{1-\alpha}, \quad B_2=\mii{1}{0}{0}{1+\alpha}, \quad K_2=\SO(2)A_2\cup\SO(2)B_2
\end{equation}
for some $\alpha\in(0,1)$. For $\alpha=1$, the matrix $A_2$ becomes singular; 
treating this unphysical case would generate some complications in the proofs,
for clarity we restrict in the relevant parts to $\alpha<1/2$. Clearly, any
other condition $\alpha<\alpha_*$, for any fixed $\alpha_*<1$, would lead to the same results. 
Physically relevant values of the spontaneous strain are typically of a few percent, up to about $10\%$ \cite{B9_Bhatta,OtsukaWayman1999}, and therefore well within the  range $(0,1/2)$.

The matrices in (\ref{assumption:K2}) have a different determinant and are representative of 
phase transformations with one rank-one
connection, in the sense that $\det(A_2-QB_2)=0$ has a unique solution $Q\in 
\SO(2)$.  In this case, the invariance under rotations and the vectorial nature
of the deformation  
can interact to reduce the energy cost of domain branching. 
The
resulting scaling, proportional to
$\varepsilon^{4/5}$, differs from the one derived for the
 scalar problem. This difference
can be understood as a consequence of the degeneracy of the rank-one
connection (in the same sense as for  double roots of polynomials). 
To see this, let us consider a typical branching pattern, as illustrated in the top right part of Figure \ref{fig:PD2}, which consists of
 a fine-scale mixture of regions with $Du$ close to $\SO(2)A_j$ and regions with $Du$ close to $\SO(2)B_j$, with interfaces
approximately horizontal.
Since $u$ does not jump across the interface, if $u$ is sufficiently regular, 
the tangent component of the gradient does not jump either. In other words, given a vector  $v\in S^1$  tangent to the interface, then the directional derivative
$Duv$ must be the same on the two sides of the interface. 
If $Du\in \SO(2)A_j$ on one side, and $Du\in \SO(2)B_j$ on the other side, then
necessarily $|A_jv|=|B_jv|$. For $j=1$, this equation is equivalent to
\begin{equation*}
  (v_1-\alpha v_2)^2+v_2^2 = 
  (v_1+\alpha v_2)^2+v_2^2 \,,
\end{equation*}
which simplifies to $\alpha v_1v_2=0$. For $j=2$ the
condition $|A_jv|=|B_jv|$  is equivalent to
\begin{equation*}
  v_1^2 + (1-\alpha)^2 v_2^2
=  v_1^2 + (1+\alpha)^2 v_2^2\,,
\end{equation*}
which simplifies to $\alpha v_2^2=0$. It is easy to see that $v_2=0$ (corresponding to $v=e_1$) is a simple solution in the first case, but a degenerate solution in the second case. Correspondingly, 
\begin{equation*}
\text{$|A_1v|-|B_1v|=O(\alpha|v-e_1|)$ but $|A_2v|-|B_2v|=O(\alpha|v-e_1|^2)$.  }
\end{equation*}
 Therefore, small deviations of the interfacial direction from $v=e_1$ generate less elastic energy in the case $K_2$ than in the case $K_1$, leading to a lower global energy.
If rotations are neglected, which is the case if one treats the scalar case,
then the two terms are of the same order,
\begin{equation*}
A_1v-B_1v=-2\alpha v_2 e_1=O(\alpha |v-e_1|)
\end{equation*}
and
\begin{equation*}
A_2v-B_2v=-2\alpha v_2 e_2=O(\alpha |v-e_1|)\,.
\end{equation*}
Therefore, the difference between $K_1$ and $K_2$ is not present in scalar models.
The detailed exploitation of this effect in the construction requires an appropriate interaction between the two components and 
 will be discussed in Section \ref{secUBtheo2}, see in particular Lemma \ref{eqconstronerecttheo2}.

From a mechanical viewpoint, in most materials the phases are symmetry-related and in particular one expects the eigenstrains to have the same determinant 
\cite{B9_BallJames87,B9_Bhatta,OtsukaWayman1999}, 
which corresponds to our case $j=1$. 
The two-dimensional geometry we discuss corresponds  to a three-dimensional setting in which both eigendeformations coincide in the third dimension.
This would be,  for example, the case for the three-dimensional eigenstrains
\begin{equation*}
 A_1'= 
 \begin{pmatrix}
   1 & -\alpha & 0\\
 0 & 1 & 0 \\
0 & 0 &\gamma
 \end{pmatrix} \text{ and }
 \quad B_1'=
 \begin{pmatrix}
   1 & \alpha & 0\\
 0 & 1 & 0 \\
0 & 0& \gamma
 \end{pmatrix}
\end{equation*}
for some $\gamma>0$, the three-dimensional deformation would then be
of the form $\varphi(x_1,x_2,x_3)=(u_1(x_1,x_2),u_2(x_1,x_2),\gamma x_3)$.

In thin films, however, a different relation between the two- and the three-dimensional problem is possible. Indeed, if the film is sufficiently thin, then incompatibility in the transverse direction leads to minor energy contributions. It is therefore reasonable to expect microstructure which is only compatible in-plane \cite{ChaudhuriMueller2006,Hornung2008}. In this case, the two eigenstrains have the same determinant as $3\times 3$ matrices, but there is no reason for the $2\times 2$ blocks which determine the in-plane microstructure to have the same determinant,
as for example with  the three-dimensional eigenstrains
\begin{equation*}
 A_2'= 
 \begin{pmatrix}
   1 & 0 & 0\\
 0 & 1-\alpha & 0 \\
0 & 0 &1/(1-\alpha)
 \end{pmatrix} \text{ and }
 \quad B_2'=
 \begin{pmatrix}
   1 & 0 & 0\\
 0 & 1+\alpha & 0 \\
0 & 0& 1/(1+\alpha)
 \end{pmatrix}\,.
\end{equation*}
It would be interesting to see if the difference between $K_1$ and $K_2$ results in a different scaling in the thin-film geometry studied in  \cite{ChaudhuriMueller2006,Hornung2008}. 

If no rank-one connection is
present, then 
the  energy is much larger, and typically no microstructure is
formed. 
 
Before stating the main results we introduce the key parameters and the set of
admissibile functions. We focus on a rectangle, and assume throughout
\begin{equation}\label{assumption:1}
\Omega= (0,L)\times (0,H), \quad H,L>0, \quad \alpha\in(0,1),\quad \varepsilon>0\,,
\end{equation}
in some parts additionally $\alpha< 1/2$.
We shall impose Dirichlet boundary conditions corresponding to the  average of
$A_j$ and $B_j$, which, with the present choices, is in both cases 
the  identity, $\frac12(A_j+B_j)=\Id$. Precisely, the
 admissible deformations are in the set
\begin{equation}\label{assumption:2}
\calM=\{u\in W^{1,2}(\Omega;\R^2): \ Du\in BV(\Omega;\R^{2\times 2}), \ u(x)=x \ \forall x\in \partial\Omega \}.
\end{equation}
For $j\in\{1,2\}$, we are interested in the scaling of the minimum of
\begin{equation}\label{assumption:E}
E_j^\varepsilon[u,\Omega] = \int_{\Omega}\dist^2(Du,K_j)dx + \varepsilon
|D^2u|(\Omega)
\end{equation}
over all $u\in\calM$. Existence of minimizers follows immediately from the
direct method of the calculus of variations and will not be further addressed
here. 
In the following, $c$ will denote a generic positive constant, which may change 
its value from line to line but in particular does not depend on 
the four parameters $\alpha$, $\varepsilon$, $H$ and $L$. We shall denote by
$E_j^\eps[v,\omega]$  the integral of the  expression in (\ref{assumption:E})
 over a subset $\omega\subset \Omega$.

\begin{theorem}\label{Theorem:K1}
There is $c>0$ such that, 
under the assumptions
(\ref{assumption:K1}), (\ref{assumption:1}), (\ref{assumption:2}) and
 (\ref{assumption:E}), one has
\begin{equation*}
\frac{1}{c}f(\alpha,\varepsilon,L,H) \leq \min_{u\in\calM} E_1^\eps[u,\Omega] \leq c
f(\alpha,\varepsilon,L,H)\,,
  \end{equation*}
where
  \begin{alignat}1\nonumber
    f(\alpha,\varepsilon,L,H)=\min&\Bigl\{
\alpha^{4/3}\varepsilon^{2/3}L^{1/3}H+\alpha\varepsilon L,\\
\nonumber
&\phantom{\Bigl\{}
\alpha^{4/3}\varepsilon^{2/3}LH^{1/3}+\alpha^4LH+\alpha\varepsilon H,\\
&\phantom{\Bigl\{}
\alpha^2 LH\Bigr\}\,.
\label{eqdefFtheo1}
  \end{alignat}
The constant $c$ does not depend on $\alpha$, $\varepsilon$, $L$
and $H$. 
\end{theorem}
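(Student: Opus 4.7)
The plan is to establish matching upper and lower bounds: the upper bound by displaying three explicit admissible competitors, one realizing each of the three candidates in $f$, and the lower bound by reducing the vectorial functional to a scalar Kohn--M\"uller problem using the decoupling feature of $K_1$ highlighted in the introduction.

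\textbf{Upper bound.} I would construct three comparison deformations and take the envelope. First, the affine map $u(x)=x$ satisfies the boundary data and yields $E_1^\eps[u,\Omega]\le c\alpha^2 LH$, which is the third term of $f$. Second, a Kohn--M\"uller self-similar branched laminate built from the rank-one connection $B_1-A_1=-2\alpha\,e_1\otimes e_2$: horizontal interfaces separate bulk stripes on which $Du\in\{A_1,B_1\}$, with period decreasing geometrically as one approaches the vertical walls $x_1\in\{0,L\}$; a thin interpolation layer at the walls reconciles the oscillation with the datum $u(x)=x$. Optimizing the bulk period and the number of branching generations in the standard way gives the first term $c\alpha^{4/3}\eps^{2/3}L^{1/3}H+c\alpha\eps L$, where the second summand accounts for the coarsest horizontal interface(s) spanning the full width $L$. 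Third, the analogous construction based on the second rank-one connection of $K_1$, in which the interfaces are vertical and one side of each interface carries a rotation $Q\in\SO(2)$ with $|Q-\Id|=O(\alpha)$; branching toward the horizontal walls $x_2\in\{0,H\}$ produces $c\alpha^{4/3}\eps^{2/3}LH^{1/3}+c\alpha\eps H$, and the residual mismatch between the rotated two-well average and the identity yields the bulk elastic penalty $c\alpha^4 LH$, giving the second term of $f$. The minimum of the three energies is $\le cf$.

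\textbf{Lower bound.} I would exploit the decoupling observation from the introduction: on any interface admissible in $K_1$, the tangent is horizontal or vertical, and $Du\,e_1\in\SO(2)e_1$, so the first component $u_1$ essentially carries the double-well structure. Setting $v:=u_1-x_1$ (which vanishes on $\partial\Omega$), I would show that wherever $Du$ is close to $K_1$, $\partial_2 v$ is close to $\{-\alpha,+\alpha\}$ up to a global rotation that the Dirichlet boundary data and a quantitative rigidity estimate (of Friesecke--James--M\"uller type) pin to the identity in $L^2$. The classical scalar Kohn--M\"uller lower bound of \cite{B9_KohnMuller92,B9_KohnMuller94}, applied along horizontal slices, then produces the first candidate in $f$; the same argument applied to $u_2-x_2$ along vertical slices, now using the second rank-one connection of $K_1$ to identify $\partial_1(u_2-x_2)$ as the double-well variable, gives the second candidate; and a direct elastic bound $\int_\Omega\dist^2(Du,K_1)\ge c\alpha^2 LH$ valid whenever $Du$ is far from $K_1$ on a fixed positive fraction of $\Omega$ provides the third. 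A case distinction on which regime is active yields $E_1^\eps[u,\Omega]\ge c^{-1}f(\alpha,\eps,L,H)$.

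\textbf{Main obstacle.} The principal technical difficulty is to make the scalar reduction rigorous in the presence of the global $\SO(2)$-invariance: a priori $Du$ may be close to $K_1$ with a rotation field that varies over $\Omega$, in which case the naive scalar problem for $v=u_1-x_1$ is not a genuine double-well functional. Pinning the rotation down requires a Friesecke--James--M\"uller-type rigidity estimate combined with the Dirichlet condition on all four sides of $\Omega$; the resulting $L^2$-error must be absorbed into either the elastic or the surface term of $E_1^\eps$ without spoiling the exponents $4/3$ and $2/3$. A secondary difficulty is that the two orientations of slicing (horizontal for the first candidate, vertical for the second) must be handled in parallel to recover both the $L^{1/3}H$ and the $LH^{1/3}$ bounds, which requires the rigidity-plus-slicing scheme to be robust across all aspect ratios $H/L$ and all ranges of $\alpha$ and $\eps$.
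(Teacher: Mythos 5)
Your upper bound is essentially the paper's: affine competitor, a Kohn--M\"uller branched pattern built on $B_1-A_1=-2\alpha\,e_1\otimes e_2$, and a $90$-degree rotated pattern whose conjugation error produces the $\alpha^4LH$ term (Lemmas \ref{eqconstronerecttheo1}, \ref{eqconstronerecttheo1bl}, \ref{lemmaUBK1}); no objection there.

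The lower bound, however, has a genuine gap. You propose to pin the rotation field to the identity by a Friesecke--James--M\"uller-type rigidity estimate and then apply the scalar Kohn--M\"uller lower bound to $v=u_1-x_1$. FJM rigidity is a single-well statement; here $Du$ is close to the \emph{two-well} set $K_1=\SO(2)A_1\cup\SO(2)B_1$, and two-well configurations with low energy need not have a nearly constant rotation. Worse, the vertical-branching competitor (gradients near $ZA_1Z$, $ZB_1Z$, i.e.\ near $\SO(2)A_1$, $\SO(2)B_1$ with rotations at distance of order $\alpha$ from $\Id$, alternating in sign) shows that any pinning of the rotation can only hold up to errors of order $\alpha$ --- exactly the amplitude of the double-well signal $|A_1-B_1|\sim\alpha$ you want to detect in $\partial_2 v$. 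So the reduction to a genuine scalar double-well functional collapses precisely in the regime responsible for the second candidate, and your scheme cannot produce the $\alpha^4LH$ term at all. The paper avoids rigidity entirely: it localizes to low-energy stripes $S,S'$ and a square $Q=S\cap S'$ (Lemma \ref{Lemma:Epart}), finds a single affine approximation $Fx+a$ with $F\in K_1$ on $Q$ by Poincar\'e, exploits the boundary data through the elementary Lemma \ref{lemmaaverage0} (if $|v|\le 1+d$ and $v\cdot e$ averages to $1$, then $v\approx e$ in $L^1$ --- an $L^1$-average substitute for pinning the rotation), and decides the orientation of the microstructure by a case distinction on $F_{21}$; the $\alpha^4LH$ contribution then comes from an $L^1$ bound on $\partial_1u_2$ (Lemma \ref{Lemma:dux} and (\ref{eqcase2alpha4})). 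Two further points in your sketch are not right as stated: the third candidate $\alpha^2LH$ is not obtained from ``$Du$ far from $K_1$ on a fixed fraction of $\Omega$'' (a competitor can be close to $K_1$ a.e.\ and pay only interfacial energy); in the paper it comes, together with the $\alpha\eps L$ and $\alpha\eps H$ summands that your lower bound never addresses, from a separate thin-domain trace argument (Lemma \ref{lemmathin}) combined with the branching bound as in Lemma \ref{lemmaLBK1}.
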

\begin{proof}
  Follows from  Lemma \ref{lemmaUBK1} and Lemma \ref{lemmaLBK1}.
\end{proof}
\begin{figure}[t]
  \centering
  \includegraphics[width=11cm]{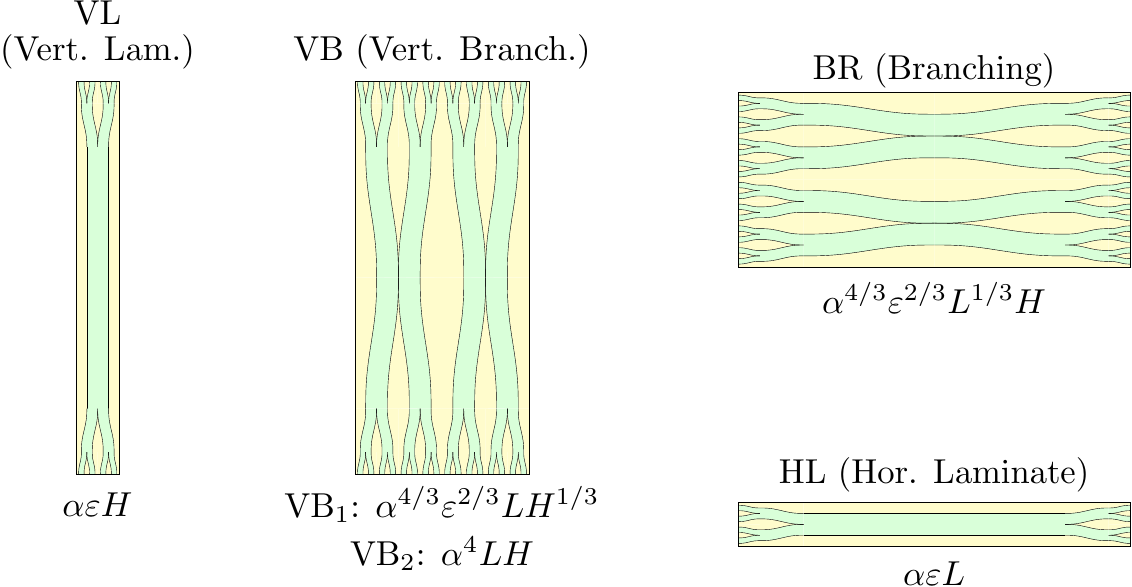}
   \caption{Sketches of the deformation $u$ in the different phases 
appearing in Theorem \ref{Theorem:K1}.}
  \label{fig:PD2}
\end{figure}
Theorem \ref{Theorem:K1} shows that, depending on the values of the four
parameters, different types of microstructure give the optimal energy
scaling. In 
Figure \ref{fig:PD2} we sketch the different microstructures
(using the constructions from the upper bound) and in 
Figure \ref{fig:PD1} the corresponding phase diagram.
 We now briefly discuss the different phases.
\begin{figure}[t]
  \centering
  \includegraphics[width=8cm]{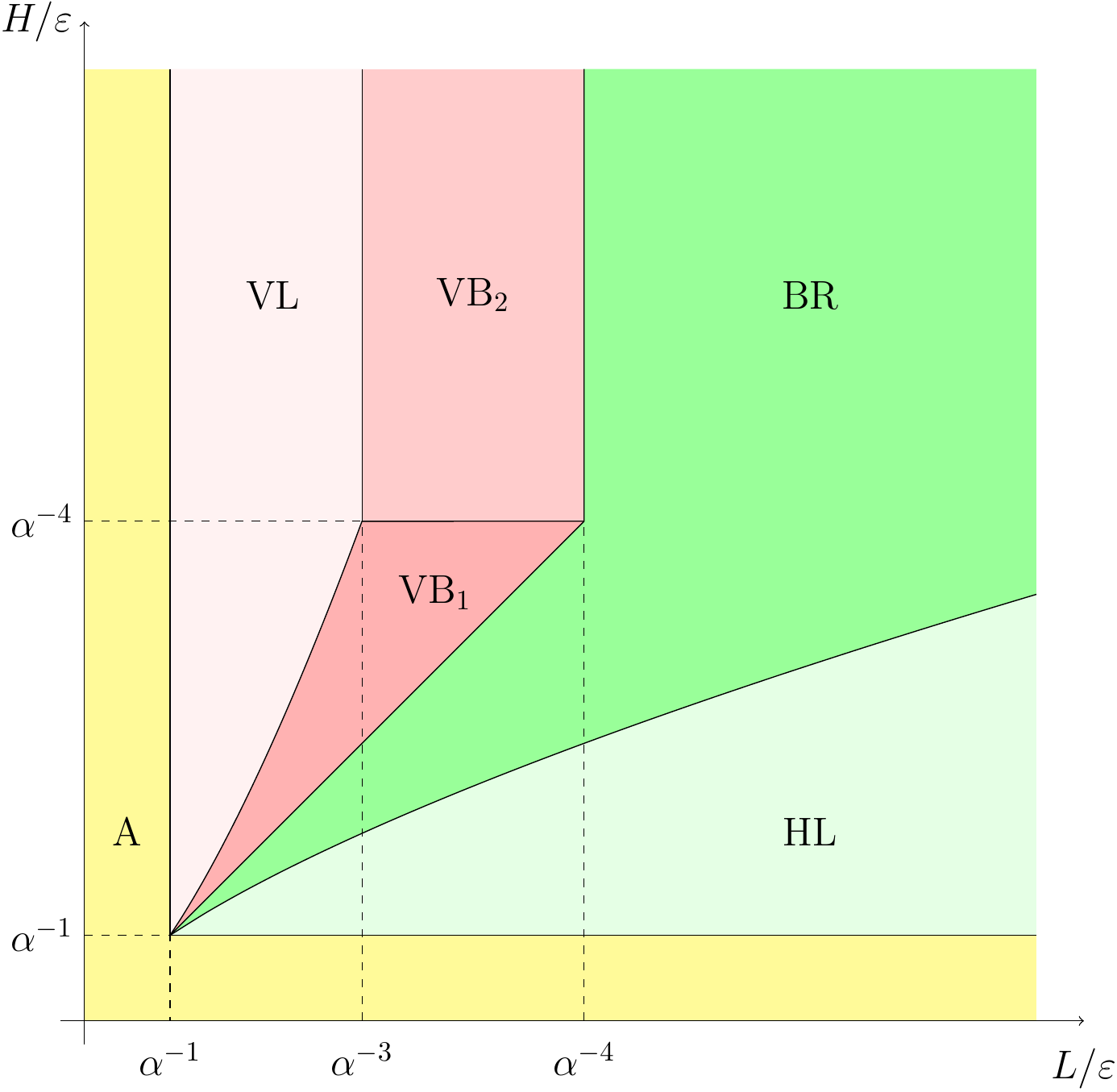}
  \caption{Schematic phase diagram in case $K_1$, in the $L/\eps$, $H/\eps$
    plane.}
  \label{fig:PD1}
\end{figure}
\begin{enumerate}
\item[A] If the domain is very small,
then the boundary data and the regularization term dominate the picture
and the transformation to martensite is not energetically convenient. The energy
scales as 
$\alpha^2 LH$, which is easily attained by the deformation $u(x)=x$. We call
this austenitic (A) phase. From (\ref{eqdefFtheo1}), one easily sees that this
phase is the optimal one if $L/\eps<\alpha^{-1}$ or $H/\eps<\alpha^{-1}$.
\item[BR]
In the branching regime (BR) the deformation gradient $Du$ oscillates  between
a value close to $A_1$ 
and a value close to $B_1$ in the interior of the sample; the boundaries are approximately horizontal
and the oscillations
become finer approaching the left and right boundary. The total energy, if $L$
and $H$ are sufficiently large compared to $\eps$, is proportional to
$\alpha^{4/3}\eps^{2/3}L^{1/3}H$, as predicted by Kohn and Müller. 
\item[HL] Horizontal laminate. If the sample is long and thin, then it is convenient to have a
  single oscillation in most of the sample, which then branches only in small
  regions close to the left and right boundary. The dominant energy contribution
 then originates from the single long interface and is  proportional to $\alpha\eps 
  L$. 
\item[VB] Vertical branching. If $H$ is larger than $L$, then it may be more
  convenient to use the second rank-one connection and have a vertical
  branching, qualitatively similar to a 90-degrees rotation of regime BR. The
  rotation, since we are dealing with a fully nonlinear model, 
  does not bring the matrices $A_1$, $B_1$ to the matrices  $\Id\pm \alpha
  e_1\otimes e_2$, but instead   leaves a second-order perturbation, which
  generates an additional energy   cost proportional to $\alpha^4LH$ (see discussion in Lemma \ref{lemmaUBK1}
  for details). Therefore, this regime can be subdivided into two parts: for
  small $H$, the $\alpha^{4/3}\eps^{2/3}LH^{1/3}$ term dominates (regime
  VB$_1$), for large $H$, the $\alpha^4LH$ term dominates (regime VB$_2$). 
\item[VL] As in the case of phase HL, if $L$ is very small, then a long
  vertical interface will dominate.
\end{enumerate}

The boundaries between the different regions can be easily obtained from 
(\ref{eqdefFtheo1}), and are sketched in Figure \ref{fig:PD1}.

\begin{figure}[t]
  \centering
  \includegraphics[width=11cm]{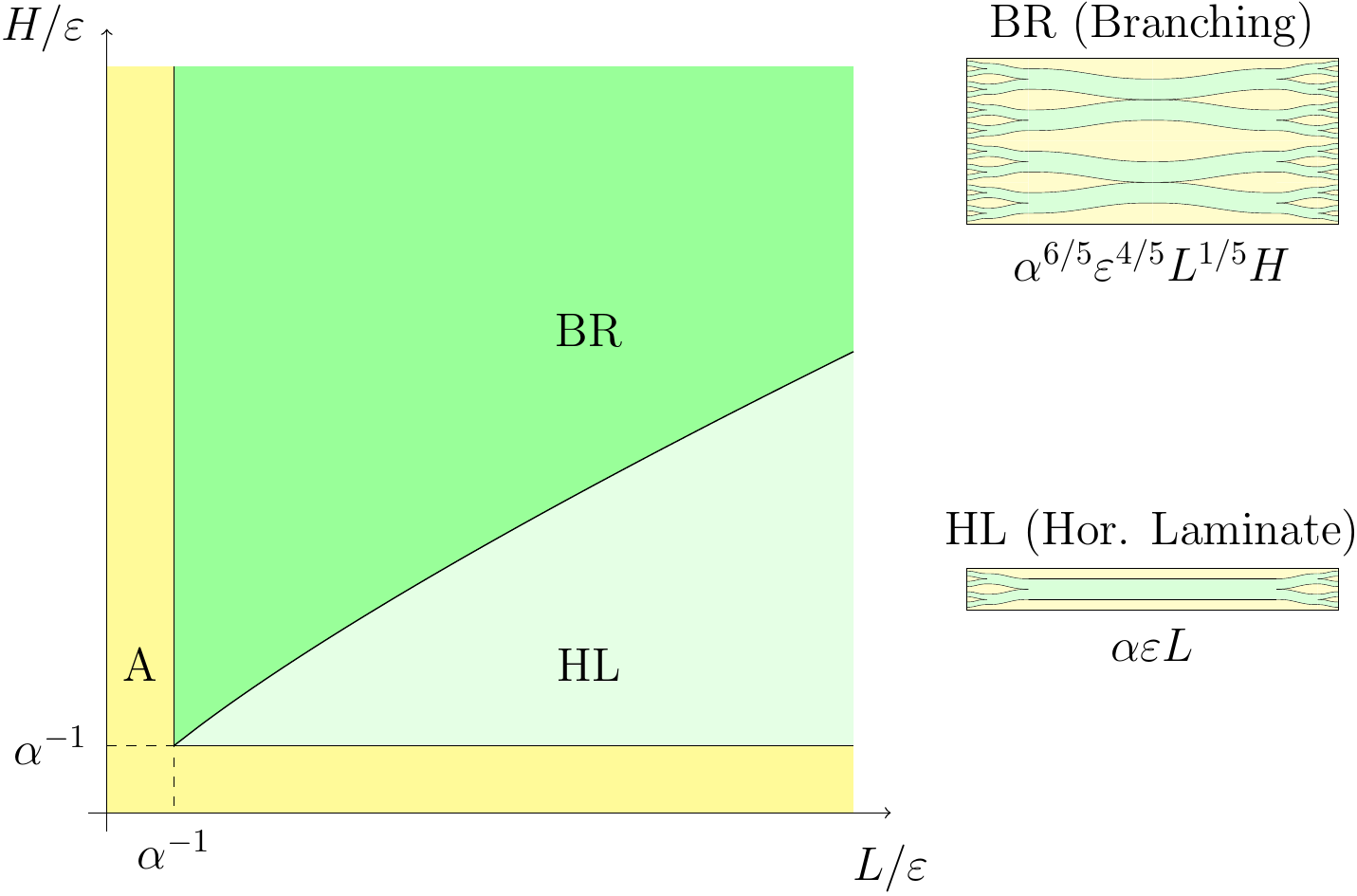}
  \caption{Schematic phase diagram in case $K_2$, in the $L/\eps$, $H/\eps$
    plane.}
  \label{fig:PD3}
\end{figure}

In the case of a single rank-one connection the energy scaling is different,
and the phase diagram simpler, see Figure \ref{fig:PD3}. Here, only the three regimes A, HL and BR play a role.
\begin{theorem}\label{Theorem:K2}
There is $c>0$ such that, 
under the assumptions
(\ref{assumption:K2}), (\ref{assumption:1}), (\ref{assumption:2}) and
 (\ref{assumption:E}), and for $\alpha<1/2$,   one has
 \begin{equation*}
 \frac{1}{c}g(\alpha,\eps,L,H) \leq \min_{u\in\calM} E_2^\eps[u,\Omega]  \leq c g(\alpha,\eps,L,H) \,,
 \end{equation*}
where
\begin{equation*}
g(\alpha,\eps,L,H)=\min\Bigl\{\alpha^{6/5}\varepsilon^{4/5}L^{1/5}H+\alpha\eps L
,\alpha^2 LH\Bigr\}\,.
\end{equation*}
The constant $c$ does not
depend on $\alpha$, $\varepsilon$, $H$ 
and $L$. 
\end{theorem}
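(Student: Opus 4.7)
Following the strategy of Theorem \ref{Theorem:K1}, I would establish matching upper and lower bounds on $\min E_2^\eps$ separately. The two terms in $g$ correspond to two regimes: $\alpha^2 LH$ for the trivial (austenitic) phase $u(x)=x$, and $\alpha^{6/5}\eps^{4/5}L^{1/5}H+\alpha\eps L$ for a branched horizontal laminate with boundary correction. The essential novelty compared to Theorem \ref{Theorem:K1} is the exponent $\eps^{4/5}$ in place of the Kohn--M\"uller $\eps^{2/3}$: it arises from the degeneracy $|A_2v|^2-|B_2v|^2=-4\alpha v_2^2$ (rather than $-4\alpha v_1v_2$ for $K_1$) discussed in the introduction, which allows small tilts of the interfaces at only quadratic, not linear, elastic cost.

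\textbf{Upper bound.} The trivial deformation $u(x)=x$ gives $E_2^\eps[u,\Omega]\le c\alpha^2 LH$. The nontrivial construction is a branched horizontal laminate. In a central core, $Du$ alternates between values close to $A_2$ and $B_2$ on horizontal strips; since $A_2e_1=B_2e_1=e_1$, the laminate is rank-one compatible and carries no bulk elastic energy, while the associated long horizontal interfaces contribute surface energy of order $\alpha\eps L$. Near the vertical edges $x_1=0$ and $x_1=L$ the strips refine via self-similar branching so as to match the Dirichlet data $u(x)=x$. Inside each branching cell I would adapt the construction in Lemma \ref{eqconstronerecttheo2}: by tilting the strip interfaces by a small angle $\theta$ and compensating the induced change of $u_1$ via the $\SO(2)$-constraint $(\partial_{x_1}u_1)^2+(\partial_{x_1}u_2)^2=1$, which forces $\partial_{x_1}(u_1-x_1)\approx -\tfrac12(\partial_{x_1}u_2)^2$, one achieves an elastic cost of order $\alpha\theta^2$ per unit area rather than $\alpha\theta$ as in $K_1$. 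Balancing $\eps/h$ against this quadratic cost and optimizing the branching rate (geometric decrease of strip widths $h_k$ with depth into the boundary layer) yields a boundary-layer contribution of order $\alpha^{6/5}\eps^{4/5}L^{1/5}H$, proving the claimed upper bound.

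\textbf{Lower bound.} If $E_2^\eps[u,\Omega]<\delta\alpha^2 LH$ for some small absolute $\delta$, then $Du$ is close to $K_2$ on most of $\Omega$. I would apply geometric rigidity on appropriate subdomains to approximate the rotations acting on each phase, reducing $u$ to a small perturbation of a rotated laminate with a phase indicator $\chi\in BV(\Omega;\{0,1\})$ whose surface energy is controlled, after division by $\alpha$, by $\eps|D^2u|(\Omega)$. The crucial step is a slicing/interpolation inequality on vertical slices $\{x_1\}\times(0,H)$: modulo elastic error, $u_2$ restricted to such a slice satisfies $\partial_{x_2}u_2\in\{1-\alpha,1+\alpha\}$, and its horizontal partner $u_1$ is constrained quadratically in $\partial_{x_1}u_2$ by the $\SO(2)$-relation. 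Combining this with the Dirichlet condition on the vertical edges and a Kohn--M\"uller-type interpolation, in which the effective interfacial penalty is $\alpha v_2^2$ instead of $\alpha|v_1 v_2|$, yields the lower bound $\alpha^{6/5}\eps^{4/5}L^{1/5}H$; the $\alpha\eps L$ term is recovered from the unavoidable surface energy of at least one long horizontal interface. Combined with the trivial bound $E_2^\eps\ge 0$ one obtains the stated minimum.

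\textbf{Main obstacle.} The hardest step is making the lower bound sharp. One must propagate the pointwise degeneracy $|A_2v|-|B_2v|=O(\alpha|v-e_1|^2)$ through the geometric rigidity approximation and convert it into a rigorous one-dimensional interpolation with the quadratic penalty, thereby ruling out constructions that exploit further rotational freedom to beat $\eps^{4/5}$. Handling the Dirichlet condition on the horizontal edges $x_2=0,H$, which cuts through the laminate, is a secondary technical point, absorbed into the $\alpha\eps L$ interface term.
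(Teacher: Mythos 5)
Your upper bound plan is essentially the paper's: the identity map gives $\alpha^2LH$, and a branched pattern whose refinement cells exploit the second component and the $\SO(2)$ invariance to reduce the cost of a tilted interface from $\dist\sim\alpha h/\ell$ (the $K_1$ value) to $\dist\sim\alpha h^2/\ell^2$, i.e.\ an elastic energy $\alpha^2h^5/\ell^3$ per cell, which after summing the branching tree and optimizing the number of central oscillations gives $\alpha^{6/5}\eps^{4/5}L^{1/5}H+\alpha\eps L$; this is exactly Lemmas \ref{eqconstronerecttheo2}, \ref{eqconstronerecttheo2bl} and \ref{lemmaUBK2}. One detail to repair: prescribing only $\partial_1u_1\approx 1-\tfrac12(\partial_1u_2)^2$ does not suffice, because the dangerous term is the off-diagonal combination $\partial_2u_1+(1-\alpha)\partial_1u_2$, which is of order $\alpha h/\ell$ unless it is compensated; the paper defines $v_1$ by integrating precisely $\partial_2v_1+(1-\alpha)\partial_1v_2=0$ in $y$, and the quadratic cost then comes from the residual diagonal error $\partial_1v_1-1\sim\alpha h^2/\ell^2$ together with the linearization error $(\partial_1v_2)^2$. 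This is fixable and does not change the scaling.

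The genuine gap is the lower bound, and you identify it yourself as the unresolved ``main obstacle''. The route you sketch (geometric rigidity per phase, a $BV$ phase indicator, then a Kohn--M\"uller-type slicing/interpolation on vertical lines with an ``effective interfacial penalty $\alpha v_2^2$'') is not a proof and, more importantly, its naive version proves the wrong statement: slicing with $\partial_2u_2\in\{1\pm\alpha\}$ and the Dirichlet data on the vertical edges is exactly the scalar Kohn--M\"uller argument and yields an $\eps^{2/3}$-type bound, which is false here since your own construction achieves $\eps^{4/5}\ll\eps^{2/3}$. The whole content of the lower bound is to quantify precisely how much the coupling of $u_1$ and $u_2$ through rotations weakens that argument, and ``quadratic penalty'' is a heuristic about the construction, not an inequality one can slice with; it is also unclear that two-well rigidity estimates would give the sharp dependence on $\alpha$. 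The paper's mechanism is concrete and different: localize on a square $Q=S\cap S'$ of side $\lambda$ with low energy and an approximating matrix $F\in K_2$ (Lemma \ref{Lemma:Epart}); use Lemma \ref{lemmaaverage0}, based on $|Du\,e_1|\le 1+\dist(Du,K_2)$ and the boundary data, to get $\|\partial_1u_1-1\|_{L^1(S)}\lesssim\lambda L^{1/2}E^{1/2}/H^{1/2}$ and $\|\partial_1u_2\|_{L^1(S)}\lesssim\lambda L^{3/4}E^{1/4}/H^{1/4}$; then test $F_{22}-1$ (or $F_{12}$) against a smooth $f(x_2)$ and integrate by parts \emph{twice}, using the algebraic identity $G_{12}+(1+\sigma\alpha)G_{21}=0$ for $G\in K_2$ to trade $\partial_1u_2$ for $\partial_2u_1$, so that the leading term is controlled through $\|\partial_1u_1-1\|_{L^1}$ and the weaker $\partial_1u_2$ estimate only enters with a prefactor $\alpha$; optimizing $\lambda$ gives $\alpha^{6/5}\eps^{4/5}L^{1/5}H$, while the $\alpha\eps L$ term is obtained not from an ``unavoidable long interface'' (which you cannot assert for a general competitor) but from the thin-domain trace estimate of Lemma \ref{lemmathin} combined with an arithmetic-geometric mean absorption. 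Without this double integration by parts and rotation identity, or a genuine substitute for it, your lower bound does not go through.
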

\begin{proof}
  Follows from  Lemma \ref{lemmaUBK2} and Lemma \ref{lemmaLBK2}.
\end{proof}

%%%%%%%%%%%%%%%%%%%%%%%%%%%%%%%%%%%%%%%%%%%%%%%%%%
The situation in which the dependence on $\alpha$ is not analyzed is
substantially simpler, in particular for the lower bounds. Indeed, a result
similar to Theorem \ref{Theorem:K1} without 
dependence on $\alpha$ and with $H=L$ was first proven in
\cite{B9_Chan,B9_ContiOWJun2007}.
These results  are  discussed in  the PhD thesis
\cite{B9_ChanDiss} and, in the simplified case $\alpha\sim 1$ and 
 $H=L$, are presented in
\cite{ChanContiSFBBericht}. 
Here we focus for simplicity on the case that $\Omega$ is a rectangle, keeping the two lengths separated to emphasize, in the case of Theorem \ref{Theorem:K1}, the competition between the two orientations of the microstructure. Although the detailed comparison between the different patterns is geometry-dependent, the
leading-order scaling in $\eps$  in the  small-$\eps$ limit is not expected to depend on the shape of $\Omega$. This is addressed (for similar models) 
in \cite{B9_KohnMuller94} for parallelograms and in \cite{B9_Diermeier}
for general polygons. The case of piecewise smooth domains is treated \cite{B9_BCDM2002} for a model of thin-film blistering,
without obtaining significant differences with the case of a rectangle.

In closing this introduction, we briefly review the literature on energy scaling in martensitic phase transformations. The scalar model
\begin{equation*}
  E_\mathrm{KM}[u]=
  \begin{cases}\displaystyle
    \int_{\Omega} (\partial_1 u)^2 dx + \eps |\partial_2\partial_2 u|(\Omega) & \text{ if } \partial_2 u \in\{-1,1\}\text{ a.e.,}\\
\infty & \text{ otherwise,}
  \end{cases}
\end{equation*}
corresponding to a rigid version of our $K_1$ case, was proposed by Kohn and Müller in 1992. 
They have shown that, if the Dirichlet boundary data $u(0,\cdot)=0$ are imposed, for small $\eps$, the energy scales as $\eps^{2/3}L^{1/3}H$ \cite{B9_KohnMuller92,B9_KohnMuller94}, the same result as in Theorem \ref{Theorem:K1}. 
It was later shown that the minimizers of their functional are asymptotically self similar \cite{B9_Conti00}.
Kohn and Müller  also studied a softer problem, in which the Dirichlet boundary condition is eliminated and instead the term $\beta \|u(0,\cdot)\|_{H^{1/2}((0,H))}^2$ is added to the energy $E_\mathrm{KM}$, where $u(0,\cdot)$ is the  trace of the deformation on the $x_1=0$ boundary. Physically, this corresponds to the elastic energy of the material ``outside the domain''. Their results, which were completed in \cite{B9_Conti2006,B9_Zwicknagl},  state that, for small $\eps$ and $\beta$, the optimal energy scales as 
\begin{equation}\label{eqscalingKMbeta}
  \min\{\eps^{2/3}L^{1/3}H, \eps^{1/2}\beta^{1/2}L^{1/2}H\}\,.
\end{equation}
The first option corresponds to a branched pattern, the second one to a laminar, one-dimensional pattern, in which the interfaces are exactly parallel to $e_1$ and the incompatibility is accommodated by the boundary term.
We remark that the minimum in the assertion of
 Theorem 1 in \cite{B9_Conti2006} misses a term proportional 
to $\eps^{1/2}$. This term,
which  was correctly identified in 
 \cite{B9_Zwicknagl}, arises since,
at the end of the proof, one can only choose $l'=\min\{1/\beta, \eps^{-1/2}\}$
and is only relevant for large values of $\eps$.
The correct bound is
$J_{\eps,\beta}\ge c \min\{\eps^{1/2}\beta^{1/2}L^{1/2}H,
\eps^{2/3}L^{1/3}H, \eps^{1/2}H^{3/2}, \beta H^2\}$.

Variants, in which the sharp condition $\partial_2 u\in\{-1,1\}$ is replaced by a penalization and using different norms for the singular perturbation, were studied by Zwicknagl \cite{B9_Zwicknagl}, leading   for small $\eps$ to 
a result similar to (\ref{eqscalingKMbeta}).
The geometrically nonlinear version was addressed by Dolzmann and Müller
\cite{B9_DolzmannMueller1995}, who proved a rigidity result that implies 
that the energy scaling of the present problem is superlinear in $\eps$. 
Lorent \cite{Lorent2006} has shown that the energy scaling of the singularly perturbed problem and the one of a finite-element approximation are the same,
$\Gamma$-convergence to a sharp-interface model (after dividing the 
energy by $\eps$)  was derived in \cite{B9_ContiSchweizer2006b},
quantitative rigidity estimates for low-energy states of  singularly
perturbed multiwell problems
were obtained in \cite{B9_Lorent2005,B9_ContiSchweizer2006b,B9_ChermisiConti2010,JerrardLorent2013}.
The energy scaling for the geometrically nonlinear model was obtained in \cite{B9_Chan,B9_ContiOWJun2007}, a version of Theorem
\ref{Theorem:K1} with a geometrically linear model and without the explicit
dependence on $H$ and $L$ was given in \cite{B9_Diermeier}.
A somewhat different approach, in which not only the deformation but also the shape of the inclusion was optimized, 
was followed in \cite{KnuepferKohnOtto2013} in a geometrically linear setting, mimicking nucleation of martensite and leading to different scalings.

The rest of this paper discusses the proofs of Theorem \ref{Theorem:K1} and Theorem \ref{Theorem:K2}. The upper bounds are proven in Section \ref{sectUB}, the 
 lower bounds  in Section \ref{seclowerbound}.

%%%%%%%%%%%%%%%%%%%%%%%%%%%%%%%%%%%%%%%%%%%%%%%%%%
\section{Upper bounds}\label{sectUB}
The upper bounds are proven by explicit constructions. Whereas the basic
construction in the proof of Theorem \ref{Theorem:K1} is a direct
generalization of the one by Kohn and Müller, the different scaling in Theorem
\ref{Theorem:K2} arises from a somewhat different  construction, which 
exploits the vectorial nature of the problem and the invariance under
rotations  of the energy density. Therefore, we start with a discussion of the latter, and postpone
the proof of  Theorem \ref{Theorem:K1} to Section \ref{secubtheo1} below.
In what follows,
$c$  denotes a positive constant which does not depend on $\alpha$,
$\varepsilon$, $L$  and $H$  and may be different from line to line.  In this
section we write $(x,y)$ for generic points in $\R^2$.

\subsection{Proof of the upper bound in Theorem \ref{Theorem:K2}}
\label{secUBtheo2}
The key idea is to use a number of period-doubling steps which join fine-scale
oscillations close to the boundary with coarse-scale oscillations in the
interior of the sample. The difference between the two cases considered and
the main novelty in the present construction reside in the treatment of the
period-doubling step, which we now present.

In order to fix the boundary conditions on the internal boundaries,
we fix a continuous $1$-periodic function $\vartheta:\R\to\R$ such that $\vartheta(0)=0$, 
$\vartheta'=1$ on $(-1/4,1/4)$, $\vartheta'=-1$ on $(1/4,3/4)$
(equivalently, $\vartheta(t)=\dist(t+\frac14,\Z)-\frac14$)
and scale it according  to $\vartheta_h(t)=h\vartheta(t/h)$. Then $\vartheta_h$ is $h$-periodic
and has derivative $\pm1$ almost everywhere. We observe that the function
\begin{equation*}
  w_h(x,y)=
  \begin{pmatrix}
    x\\y +\alpha\vartheta_h(y)
  \end{pmatrix}
\end{equation*}
fulfills $Dw_h\in\{A_2,B_2\}$ almost everywhere.
Further, on each line
$\{(x,y): y\in h\Z\}$ we have $w_h(x,y)=(x,y)$, corresponding to
the prescribed boundary conditions. The construction will be based on using $w_h(x,y)$
for selected values of $x$, with different values of $h$, larger in the center
of the domain and smaller close to $x=0$ and $x=L$. The key ingredient in the
proof is the following period-doubling construction, which permits to join different values of
$h$ with small energetic cost.
\begin{lemma}\label{eqconstronerecttheo2}
  Let $0<h\le\ell$, $\omega=(x_0,x_0+\ell)\times(y_0,y_0+h)$, $\alpha\in(0,1)$, 
  $\eps>0$. Then there is a function 
  $v:\omega\to\R^2$ such that
  \begin{equation*}
    E_2^\eps[v, \omega]\le 
    c \left(\alpha^2\frac{h^5}{\ell^3} +  \alpha \eps \ell\right)\,,
  \end{equation*}
with $\|Dv-\Id\|_{L^\infty}\le c\alpha$ and fulfilling
 the boundary conditions $v(x,y)=(x,y)$ for $y\in\{y_0,y_0+h\}$,
$v(x_0,y)=(x_0,y+\alpha \vartheta_{h/2}(y-y_0))$, 
$v(x_0+\ell,y)=(x_0+\ell,y+\alpha\vartheta_{h}(y-y_0))$.
The constant $c$ is universal.
\end{lemma}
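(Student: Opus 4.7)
My plan is to build $v$ by pairing the auxiliary oscillation $v_2=y+\alpha\phi$, with $|\partial_y\phi|=1$ a.e., with a cleverly chosen $v_1$ that exploits rotation invariance: as explained in the introduction, the bound $|A_2v|-|B_2v|=O(\alpha|v-e_1|^2)$ means that the first-order elastic penalty across a slightly tilted interface (slope $\delta$) can be absorbed by a small rotation, so that only a second-order residual $O(\alpha^2\delta^4)$ remains per unit area. With $\delta$ of order $h/\ell$ and total area $\ell h$, this produces bulk energy of order $\alpha^2 h^5/\ell^3$, matching the announced bound; the same construction with $v_1\equiv x$ would give only the scalar scaling $\alpha^2 h^3/\ell$, which is strictly worse.

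Concretely, I would decompose $\omega$ into five affine pieces by four interfaces of slope $\pm h/(8\ell)$: two outer interfaces link $(x_0,y_0+h/8)$ to $(x_0+\ell,y_0+h/4)$ and the top mirror, while two inner interfaces leave the left side at $y_0+3h/8$ and $y_0+5h/8$ and meet at $(x_0+\ell,y_0+h/2)$, enclosing the extra $+$ tongue carried by $\vartheta_{h/2}$ that is absent from $\vartheta_h$. On each piece, $\phi$ is affine with $\partial_y\phi=\pm 1$; continuity across interfaces together with the boundary values $\vartheta_{h/2}$, $\vartheta_h$, $0$ uniquely determine $\phi$. On the three horizontal pieces (top and bottom strips, and the inner tongue), $\partial_x\phi=0$ and one sets $v_1=x+\mathrm{const}$, so that $Dv\in\{A_2,B_2\}$ exactly. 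On the two tilted $-$ pieces (where $\partial_x\phi=\pm 2\delta$), take $v_1$ affine with $\partial_y v_1=\mp 2\alpha(1-\alpha)\delta$ and $\partial_x v_1=1+2\alpha(1-\alpha)\delta^2$, the two correction terms being chosen so that the tangent derivative of $v$ is continuous across each of the four interfaces. A direct Taylor expansion against the closest element of $\SO(2)A_2$ (which has rotation angle $\theta$ with $\sin\theta=2\alpha\delta$) gives $\dist(Dv,K_2)\le c\alpha\delta^2$ pointwise on the tilted pieces, so the total bulk cost is bounded by $c\alpha^2\delta^4\cdot\ell h\le c\alpha^2 h^5/\ell^3$; the surface cost along the four interfaces (total length $O(\ell)$, jump magnitude $O(\alpha)$) contributes at most $c\alpha\eps\ell$.

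The delicate point, and the one I expect to be the main obstacle, is matching the prescribed flat boundary data $v_1\equiv x_0$ and $v_1\equiv x_0+\ell$ on the parts of the left and right vertical sides that lie in the tilted $-$ pieces, where the above choice of $v_1$ leaves a linear-in-$y$ residual of amplitude $O(\alpha h^2/\ell)$. I would resolve this by inserting a thin boundary strip adjacent to each vertical side, within which the rotation correction is gradually turned off by an interpolation that preserves continuity of $v$ and keeps the introduced $BV$ jumps small. The bookkeeping in these strips is the technical heart of the proof: one must verify that the extra bulk and surface contributions together still fit within $c(\alpha^2 h^5/\ell^3+\alpha\eps\ell)$, which is what pins down the strip geometry. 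The pointwise bound $\|Dv-\Id\|_{L^\infty}\le c\alpha$ is automatic once every entry of $Dv-\Id$ is seen to be $O(\alpha)$, both in the interior pieces and in the corrective strips.
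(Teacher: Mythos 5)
Your interior construction is essentially the one in the paper: the same five-region decomposition, $v_2$ built from $\partial_2 v_2=1\pm\alpha$, and a corrected $v_1$ enforcing $\partial_2 v_1+(1-\alpha)\partial_1 v_2=0$ together with a second-order adjustment of $\partial_1 v_1$, which gives $\dist(Dv,K_2)\le c\,\alpha h^2/\ell^2$ on the tilted pieces and hence the bulk term $c\,\alpha^2 h^5/\ell^3$. The genuine gap is exactly the step you defer: matching the flat trace of $v_1$ on the vertical sides. This is not a boundary-layer technicality but the point that dictates the geometry of the whole construction. The paper resolves it by \emph{curving} the interfaces, $y=\mathrm{const}+\frac h8\gamma(x/\ell)$ with a smooth $\gamma$ (e.g.\ $\gamma(x)=10x^3-15x^4+6x^5$) satisfying $\gamma'(0)=\gamma'(1)=0$: then $\partial_1 v_2$ and the correction in $v_1$ are both proportional to $\gamma'(x/\ell)$ and vanish identically at $x=x_0$ and $x=x_0+\ell$, so the prescribed boundary values hold exactly, with no extra strip, while the curvature only enters through $\gamma''$ and preserves the pointwise bound $c\,\alpha h^2/\ell^2$.

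Your proposed fix --- keep straight interfaces and switch the rotation correction off in a thin strip of width $w$ next to each vertical side --- does not fit the energy budget under the natural estimates. If the interfaces keep their tilt in the strip, switching off the correction reintroduces the off-diagonal residual $(1-\alpha)\partial_1 v_2\sim\alpha h/\ell$ at cost $\sim\alpha^2 h^3 w/\ell^2$, while absorbing the $O(\alpha h^2/\ell)$ trace mismatch of $v_1$ through $\partial_1 v_1$ over width $w$ costs $\sim\alpha^2 h^5/(\ell^2 w)$; if instead you also flatten the interfaces inside the strip, the $x$-derivative of the $v_1$-correction is of order $\alpha h^2/(\ell w)$ and the same term $\alpha^2 h^5/(\ell^2 w)$ appears. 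Balancing gives at best an extra contribution of order $\alpha^2 h^4/\ell^2$ (at $w\sim h$), which exceeds the target $\alpha^2 h^5/\ell^3$ by the unbounded factor $\ell/h$ and is not covered by $\alpha\eps\ell$ for small $\eps$. The conclusion is that the transition must be spread over a length of order $\ell$ and coordinated between $v_1$ and $v_2$ --- and once you do that smoothly you have reproduced the paper's $\gamma$-construction. So the proposal identifies the right mechanism and the right obstacle, but the part you call the technical heart is missing, and the thin-strip route you sketch for it would not yield the stated bound.
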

\begin{proof}
  We assume without loss of generality $x_0=y_0=0$. 
  We fix a smooth interpolation function  $\gamma:[0,1]\rightarrow [0,1]$ with
$\gamma(0)=0$, $\gamma(1)=1$ and the first two derivatives vanishing at both
points, say  $\gamma(x)=10x^3-15x^4+6x^5$. We subdivide  $\omega$ into the five sets
\begin{equation*}%\label{eqdefomega15}
 \begin{split}
  \omega_1 &= \left\{ (x,y)\in\omega: \ 0< y <
    \frac{h}{8}+\frac{h}{8}\gamma\left(\frac x\ell\right)\right\}, \\
  \omega_2 &= \left\{ (x,y)\in\omega: \
    \frac{h}{8}+\frac{h}{8}\gamma\left(\frac
      x\ell\right)<y<\frac{3h}{8}+\frac{h}{8}\gamma\left(\frac x\ell\right)\right\}, \\ 
  \omega_3 &= \left\{ (x,y)\in\omega: \
    \frac{3h}{8}+\frac{h}{8}\gamma\left(\frac
      x\ell\right)<y<\frac{5h}{8}-\frac{h}{8}\gamma\left(\frac x\ell\right)\right\}, \\ 
  \omega_4 &= \left\{ (x,y)\in\omega:  \ \frac{5h}{8}-\frac{h}{8}\gamma\left(\frac x\ell\right)<y<\frac{7h}{8}-\frac{h}{8}\gamma\left(\frac x\ell\right)\right\}, \\
  \omega_5 &= \left\{ (x,y)\in\omega: \ \frac{7h}{8}-\frac{h}{8}\gamma\left(\frac x\ell\right)<y<h\right\},
 \end{split}
\end{equation*}
see Figure \ref{fig:2}. In the domains $\omega_1$,  $\omega_3$ and $\omega_5$
we shall construct $v$ so that $Dv$ is close to $B_2=\diag(1,1+\alpha)$, in the other two so that
it is close to $A_2=\diag(1,1-\alpha)$. Since the boundaries are curved, there
will necessarily be deviations from $A_2$ and $B_2$, and in particular 
the $x$-derivative will not be identically $e_1$.

\begin{figure}[t]
  \centering
  \includegraphics[width=10cm]{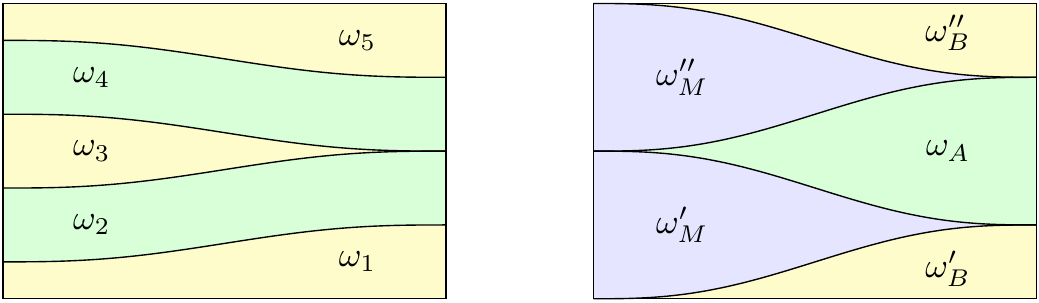}
  \caption{Left panel: subdivision of the domain $\omega$ used in Lemma
    \ref{eqconstronerecttheo2}. Right
    panel: subdivision of the domain used in Lemma
    \ref{eqconstronerecttheo2bl}.}
  \label{fig:2}
\end{figure}

The basic idea is to  set $v_2(\cdot,0)=0$, according to the boundary conditions,
and then to construct $v_2(x,y)$
by integrating $\partial_2 v_2(x,\cdot)$ from $0$ to $y$, with $\partial_2
v_2=1+\alpha$ in $\omega_1$,  $\omega_3$,  $\omega_5$,  and $\partial_2
v_2=1-\alpha$ on the other two sets. This leads to
\begin{equation*}%\label{eqdefv2th2}
v_2(x,y)=
\begin{cases}
{(1+\alpha)y} & \text{on $\omega_1$}, \\
{(1-\alpha)y+\alpha h(\frac{1}{4}+\frac{1}{4}\gamma\left(\frac x\ell\right))} &
\text{on $\omega_2$}, \\ 
{(1+\alpha)y-\frac{\alpha h}{2}} & \text{on $\omega_3$}, \\
{(1-\alpha)y+\alpha h (\frac{3}{4}-\frac{1}{4}\gamma\left(\frac x\ell\right))} & \text{on $\omega_4$}, \\
{(1+\alpha)y-\alpha h} & \text{on $\omega_5$}.
\end{cases}  
\end{equation*}
Computing $\partial_1 v_2$, we easily see that it is of order $\alpha
h/\ell$ in $\omega_2$ and $\omega_4$. In particular, if we would simply set
$v_1(x,y)=x$, we would obtain
\begin{equation}\label{eqdval}
  |Dv-A_2|\sim \frac{\alpha h}{\ell} \text{  in  $\omega_2$ and $\omega_4$}\,,
\end{equation}
and therefore an energy at least of order $\alpha^2h^3/\ell$ in $\omega$.

It is, however, possible, via a more careful construction of $v_1$, to  reduce the
energy further. The idea is to use rotations, in a way similar to what was done in the
study of folding pattern in blistered thin films
\cite{B9_BCDM2000,B9_BCDM2002}. To understand the key idea it is helpful to
first consider  the 
linearized model, in which $\dist(Dv, \SO(2) A_2)$ is replaced by
$|(Dv+Dv^T)/2-A_2|$. The easier choice for $v_1$, namely, $v_1(x,y)=x$, would
make the first diagonal entry in the gradient, $\partial_1v_1$, vanish. The other natural option, namely, to
make the off-diagonal entries (which are equal) vanish, leads to the
possibility of defining  $v_1$ by solving
$\partial_2 v_1+\partial_1v_2=0$. 
By the same estimate as in (\ref{eqdval}), the term $\partial_1v_2$ is of
order $\alpha h/\ell$, hence integrating in $y$ we  would obtain a component $v_1$ of order $\alpha
h^2/\ell$. 
The diagonal entry in the gradient
 $\partial_1 v_1$  would then be of
 order $\alpha (h/\ell)^2$. The additional factor $h/\ell$ in $\partial_1v_1$
 with respect to $\partial_2v_1$ arises from the
 fact that we
took an integral in the $y$ direction and a derivative in the
$x$-direction. On most of our domain, $h$ will be substantially smaller 
than $\ell$. Therefore, this construction leads to a substantial reduction in energy. 

Since we are dealing with a geometrically nonlinear
model, there will be  an additional error term
of order
$(\partial_1 v_2)^2\sim \alpha^2 (h/\ell)^2$,
 arising from the linearization. We also need to take care of the
fact that we are not linearizing around the identity, but instead around the
matrix $A_2$. Therefore, we solve 
$\partial_2 v_1+(1-\alpha)\partial_1v_2=0$ instead of $\partial_2
v_1+\partial_1v_2=0$, see (\ref{eqlinearize}) below.

A detailed computation based on this idea  motivates the definition 
\begin{equation*}%\label{eqdefv1}
v_1(x,y)=
\begin{cases}
{x} & \text{on $\omega_1$}, \\
{x+\alpha(1-\alpha)\frac{h}{4\ell}\gamma'\left(\frac
    x\ell\right)(\frac{h}{8}+\frac{h}{8}\gamma\left(\frac x\ell\right)-y)} & \text{on $\omega_2$},
\\ 
{x-\alpha(1-\alpha)\gamma'\left(\frac x\ell\right)\frac{h^2}{16\ell}}& \text{on $\omega_3$}, \\
{x-\alpha(1-\alpha)\frac{h}{4\ell}\gamma'\left(\frac
    x\ell\right)(\frac{7h}{8}-\frac{h}{8}\gamma\left(\frac x\ell\right)-y)} &
\text{on $\omega_4$}, \\ 
{x} & \text{on $\omega_5$}.
\end{cases}  
\end{equation*}
This concludes the definition of $v$. We remark that $v_1$ satisfies the boundary conditions at $x=0$ and $x=\ell$ in the regions $\omega_2$, 
 $\omega_3$ and
 $\omega_4$ only if $\gamma'(0)=\gamma'(1)=0$.  Therefore, in this case (at variance with the $K_1$ case, see below) the simple choice  $\gamma(s)=s$ would not have been possible.
The boundary conditions on $\gamma''$, which give continuity of $Dv$ across vertical boundaries, are only a matter of convenience.

In the rest of the proof, we estimate the energy of the function we constructed.
First, we easily check that $v$ is continuous and obeys the boundary conditions.
By construction, $Dv=B_2\in K_2$ in the sets $\omega_1$ and $\omega_5$. A short
computation shows that $|Dv-B_2|=|\partial_1v_1-1|\le c \alpha h^2/\ell^2$ in $\omega_3$.
In order to estimate the energy in $\omega_2$ and $\omega_4$,
we compute
\begin{equation}\label{eqlinearize}
  \begin{split}
 \min_{Q\in \SO(2)} |Dv-QA_2|
\le& \min_{\varphi\in\mathbb{R}}
|\partial_1v_1-\cos\varphi|+|\partial_2v_2-(1-\alpha)\cos\varphi|\\
&
+|\partial_1v_2-\sin\varphi|+|\partial_2v_1+(1-\alpha)\sin\varphi|\\
  \le& |\partial_1 v_1-1|+|\partial_2 v_2-(1-\alpha)| \\
&+ 
|\partial_2 v_1+(1-\alpha)\partial_1  v_2| + c |\partial_1 v_2|^2    \,,
  \end{split}
\end{equation}
where we have chosen $\varphi$ such that $\sin\varphi=\partial_1v_2$ if
$\partial_1v_2\in[-1,1]$, $\varphi=0$ otherwise. 
Inserting the definition of $v$, one sees that the second and third term
vanish,  the remainder can be estimated by
\begin{alignat*}1
\mathrm{dist}(Dv,K_2)\le& 
c\alpha \frac{h^2}{\ell^2} \|\gamma''\|_{L^\infty}
+ c\alpha\frac{h^2}{\ell^2} \|\gamma'\|_{L^\infty}^2
+ c \Bigl(\alpha \frac{h}{\ell} \|\gamma'\|_{L^\infty}\Bigr)^2\\
\le&
c\alpha \frac{h^2}{\ell^2}\quad \text{ in }\omega_2\cup\omega_4\,,
\end{alignat*}
and therefore,
\begin{equation*}%\label{US:Ende:-1}
 \int_{\omega_2\cup\omega_4}\dist^2(Dv(x,y),K_2)\, dxdy  \leq
 c\alpha^2\frac{h^5}{\ell^3}. 
\end{equation*}
A simple computation shows that $|Dv-\Id|\le c\alpha$ on $\omega$.

We finally turn to the interfacial part of the energy. 
In each of the $\omega_i$ we see by inspection that $|D^2v|\le c \alpha
h/\ell^2$ (we use repeatedly that $h\le\ell$ to simplify expressions). The jump
contributions can be estimated by $2\|Dv-\Id\|_{L^\infty}$ 
times the length of the jump set; since $\gamma$ is monotone, this is bounded
by $4(h+\ell)$. We conclude that
\begin{equation*}%\label{US:Ende:01}
\varepsilon |D^2v|(\omega)\leq c\varepsilon\left(\alpha (\ell+h) + \alpha
  h\ell \frac{h}{\ell^2}\right)\leq c\varepsilon \alpha \ell\,.
\end{equation*}
This completes the proof.
\end{proof}

We now turn to the boundary layer. 
One could use a linear interpolation as in \cite{ChanContiSFBBericht}. For 
the sake of variety, we
present an alternative construction, which is based on a variant of the
construction of  Lemma \ref{eqconstronerecttheo2} and  does not use gradients far
from $A$, $B$ and $\Id$.
\begin{lemma}\label{eqconstronerecttheo2bl}
  Let $0<h\le\ell$, $\omega=(x_0,x_0+\ell)\times(y_0,y_0+h)$, $\alpha\in(0,1)$, $\eps>0$.
 Then there is a function    $v:\omega\to\R^2$ such that
  \begin{equation*}
    E_2^\eps[v, \omega]\le 
 c \left(\alpha^2h\ell +  \alpha \eps \ell\right)\,,
  \end{equation*}
  with the boundary conditions $v(x,y)=(x,y)$ for $y\in\{y_0,y_0+h\}$ or $x=x_0$,
$v(x_0+\ell,y)=(x_0+\ell,y+\alpha\vartheta_{h}(y-y_0))$ and
$\|Dv-\Id\|_{L^\infty}\le c\alpha$.
The constant $c$ is universal.
\end{lemma}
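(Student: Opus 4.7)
The plan is to construct $v$ by subdividing $\omega$ into three wedge-shaped subregions whose common boundaries fan out from the single vertical line $\{x=x_0\}$, where the boundary datum is the identity, to the three horizontal strips $[0,h/4]$, $[h/4,3h/4]$, $[3h/4,h]$ at $x=x_0+\ell$, where the datum $y+\alpha\vartheta_h(y-y_0)$ is a $B$-$A$-$B$ stack. Reusing the smooth cutoff $\gamma$ from Lemma \ref{eqconstronerecttheo2} and writing $s=(x-x_0)/\ell$, $y_1(s)=\tfrac{h}{4}\gamma(s)$, I would set
\begin{equation*}
\omega_1=\{0<y-y_0<y_1\},\quad \omega_2=\{y_1<y-y_0<h-y_1\},\quad \omega_3=\{h-y_1<y-y_0<h\},
\end{equation*}
so that at $x=x_0$ the outer regions are empty and $\omega_2$ fills the whole strip, whereas at $x=x_0+\ell$ the subdivision coincides with the prescribed $B$-$A$-$B$ strips (right panel of Figure \ref{fig:2}).

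I would then set $v_1(x,y)=x$ and define $v_2$ by prescribing $\partial_2v_2=1+\alpha$ on $\omega_1\cup\omega_3$ and $\partial_2v_2=1-\beta(s)$ on $\omega_2$, choosing the constants of integration so that $v_2$ is continuous across the two interfaces and satisfies $v_2(x,y_0)=y_0$. Imposing also $v_2(x,y_0+h)=y_0+h$ forces $\beta(s)=\alpha\gamma(s)/(2-\gamma(s))$, which obeys $\beta(0)=0$ (so $v=\Id$ on the left edge) and $\beta(1)=\alpha$ (so the middle strip on the right edge carries the $A_2$-derivative); a direct computation then confirms the matching with $y+\alpha\vartheta_h(y-y_0)$ on the entire right edge. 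All remaining boundary conditions are immediate, as is $\|Dv-\Id\|_{L^\infty}\le c\alpha$ from inspection of the three piecewise values of $\partial_2 v_2$ and of $\partial_1 v_2\sim \alpha h/\ell$.

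For the elastic energy, on $\omega_1\cup\omega_3$ the diagonal $\partial_2v_2$ already coincides with that of $B_2$, and only the off-diagonal term $\partial_1v_2\sim\alpha h/\ell$ has to be absorbed by an $\SO(2)$-rotation through the same linearization as in (\ref{eqlinearize}), contributing at most $c\alpha^2 h^3/\ell\le c\alpha^2 h\ell$; on $\omega_2$ the mismatch $\alpha-\beta(s)=2\alpha(1-\gamma(s))/(2-\gamma(s))\le c\alpha$ between $\partial_2v_2$ and the $(2,2)$-entry of $A_2$ is the dominant term and already gives the expected $c\alpha^2 h\ell$ contribution. For the surface energy, the smooth part of $D^2v$ in the interior of each region is bounded by $c\alpha/\ell$ and thus contributes $c\alpha h\le c\alpha\ell$, while the two curved interfaces carry jumps of $Dv$ of size $\alpha$ over length $c\ell$, adding a further $c\alpha\ell$; multiplying by $\eps$ yields the required $c\alpha\eps\ell$.

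The step I expect to be most delicate is the verification that this single choice of $\beta(s)$ really reconciles the identity boundary data on three sides with the full period of $\vartheta_h$-oscillation on the fourth while keeping every piecewise value of $\partial_2 v_2$ within $c\alpha$ of $1$; once this algebraic matching is written out, the elastic and surface estimates follow exactly the pattern of Lemma \ref{eqconstronerecttheo2}.
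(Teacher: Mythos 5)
Your construction is correct and follows essentially the same route as the paper's proof: keep $v_1(x,y)=x$, subdivide $\omega$ into $\gamma$-shaped wedges that fan out from the identity data at $x=x_0$ to the $B$-$A$-$B$ stack at $x=x_0+\ell$, accept a gradient at distance $O(\alpha)$ from $K_2$ on a transition region of area $O(h\ell)$ (giving the $\alpha^2h\ell$ term), and bound the surface term by $c\alpha\eps\ell$ exactly as in Lemma \ref{eqconstronerecttheo2}. The only difference is cosmetic — you use three regions with an $x$-dependent middle stretch $1-\beta(s)$, $\beta=\alpha\gamma/(2-\gamma)$, where the paper uses five regions with an exact $A_2$ wedge and two near-identity transition strips — and your matching computation indeed closes (in fact on $\omega_1\cup\omega_3$ one gets $Dv=B_2$ exactly, so no rotation is needed there).
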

\begin{proof}
  The construction is similar to the one in Lemma \ref{eqconstronerecttheo2}.
  Again, it suffices to treat the case $x_0=y_0=0$.
With   $\gamma$  as in  Lemma \ref{eqconstronerecttheo2}, we set
\[
\begin{split}
 \omega_B' &= \left\{ x\in (0,\ell), \ 0<y<\frac{h}{4}\gamma
   \left(\frac{x}{\ell}\right)\right\}, \\
 \omega_M' &= \left\{ x\in(0,\ell), \
   \frac{h}{4}\gamma\left(\frac{x}{\ell}\right)<y<\frac{h}{2}-\frac{h}{4}\gamma\left(\frac{x}{\ell}\right)
\right\}, 
 \\ 
 \omega_A &= \left\{x\in (0,\ell), \
   \frac{h}{2}-\frac{h}{4}\gamma\left(\frac{x}{\ell}\right)<y<
   \frac{h}{2}+\frac{h}{4}\gamma\left(\frac{x}{\ell}\right)
\right\}\,,\\
 \omega_M'' &= \left\{ x\in(0,\ell), \
\frac{h}2+\frac{h}{4}\gamma\left(\frac{x}{\ell}\right)<y<h-\frac{h}{4}\gamma\left(\frac{x}{\ell}\right)
\right\}, 
 \\ 
 \omega_B'' &= \left\{ x\in (0,\ell), \ h-\frac{h}{4}\gamma \left(\frac{x}{\ell}\right)<y<h\right\}, 
 \end{split}
\]
see Figure \ref{fig:2}.
We set $v_1(x,y)=x$ and
\begin{equation*}%\label{K2:RF}
  v_2(x,y)=
 \begin{cases}
 y+\alpha y & \text{on $\omega_B'$}, \\
 y+\frac{1}{4} \alpha h\gamma\left(\frac{x}{\ell}\right) & \text{on
   $\omega_M'$}, \\ 
 y+\alpha(\frac{h}{2}-y) & \text{on $\omega_A$},\\
 y-\frac{1}{4}\alpha h\gamma\left(\frac{x}{\ell}\right) & \text{on
   $\omega_M''$}, \\ 
 y+\alpha (y-h) & \text{on $\omega_B''$}.
 \end{cases}
\end{equation*}
One then checks that $v_2$ is continuous, $|D^2v_2|\le c\alpha h/\ell^2$
inside each of the five sets, $Dv=B_2$ on  $\omega_B'\cup\omega_B''$, $Dv=A_2$ in
$\omega_A$, and $|Dv-\Id|\le 
\alpha h \|\gamma'\|_{L^\infty}/\ell$ in $\omega_M'\cup\omega_M''$. Therefore, $\|Dv-\Id\|_{L^\infty}\le c\alpha$ and, treating the jump terms as above,
\[
 E_2^{\varepsilon}[v,(0,\ell)\times (0,h)] \leq c\alpha^2
h\ell+c\eps\alpha \ell \,.
\]
\end{proof}

\begin{figure}[t]
  \centering
  \includegraphics[width=12cm]{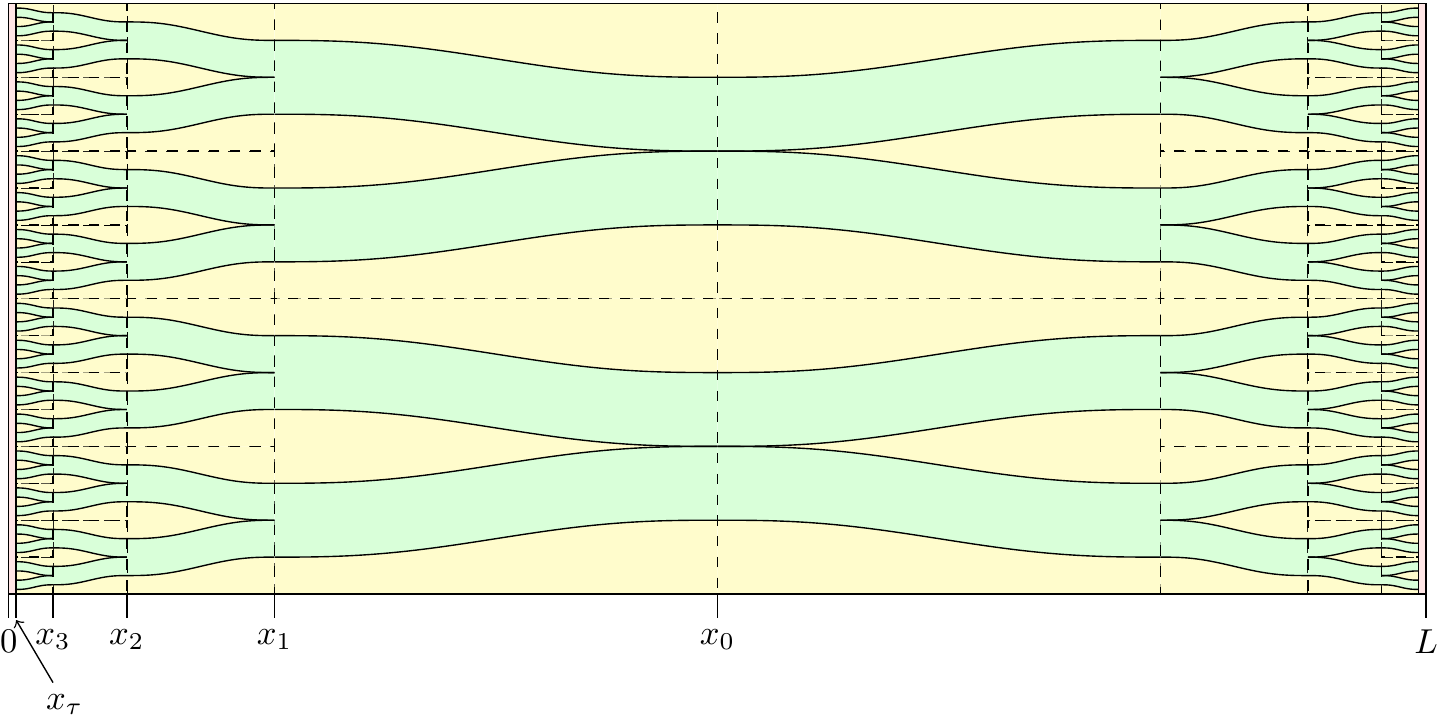}    
  \caption{Global pattern used in Lemma \ref{lemmaUBK2} for the construction of the upper bound of
    Theorem \ref{Theorem:K2}.}
  \label{fig:1}
\end{figure}

We are now ready to present the full construction.
\begin{lemma}\label{lemmaUBK2}
Under the assumptions of  Theorem \ref{Theorem:K2}, there is a deformation
$u\in \calM$ such that
\begin{equation*}
  E_2^\eps[u,\Omega]\le  c \min\Bigl\{ \alpha^{6/5}\eps^{4/5} L^{1/5}H + \alpha
  \eps L, \alpha^2 LH\Bigr\}\,.
\end{equation*}
The constant $c$ is universal.
\end{lemma}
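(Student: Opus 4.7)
I would exhibit two competitors and take the cheaper one.

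The first is the trivial $u(x,y)=(x,y)$: since $Du=\Id$, $\dist^2(\Id,K_2)\le|\Id-A_2|^2=\alpha^2$, and $|D^2u|(\Omega)=0$, this gives $E_2^\eps[u,\Omega]\le\alpha^2LH$, accounting for the second term in the stated minimum.

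For the branched competitor I would build a symmetric structure from the two cells just constructed. On the left half $(0,L/2)\times(0,H)$, mirrored to the right, I would partition into: (i) a thin boundary layer $(0,\ell_b)\times(0,H)$; (ii) a cascade of period-doubling strips of widths $\ell_k$ for $k=N,N-1,\ldots,1$; (iii) a central laminate region reaching $x=L/2$. In the boundary layer I would stack $H/h_N$ copies of Lemma~\ref{eqconstronerecttheo2bl} to interpolate from $\Id$ at $x=0$ to a sawtooth of period $h_N\sim\eps/\alpha$ at $x=\ell_b$. In strip $k$ I would stack $H/h_k$ copies of Lemma~\ref{eqconstronerecttheo2} on cells of size $\ell_k\times h_k$, refining the period from $h_{k-1}$ on its inner side to $h_k=h_{k-1}/2$ on its outer side. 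In the central region I would set $u(x,y)=(x,y+\alpha\vartheta_{h_0}(y))$, whose gradient lies in $K_2$ a.e.\ and whose trace matches the innermost strip's coarse sawtooth. I would choose $h_k=h_0\,2^{-k}$ and $\ell_k=\ell_0\,2^{-k\beta}$ with $\beta=5/4\in(1,4/3)$, so that the two relevant geometric sums converge. Continuity and the Dirichlet datum $u=\Id$ on $\partial\Omega$ would follow directly from the matching boundary values built into the two lemmas.

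Summing, the total energy would satisfy
\begin{equation*}
E_2^\eps[u,\Omega]\le c\left[\sum_{k=1}^{N}\frac{H}{h_k}\left(\alpha^2\frac{h_k^5}{\ell_k^3}+\alpha\eps\ell_k\right)+\alpha\eps\frac{LH}{h_0}+\alpha\eps H\right],
\end{equation*}
where the three groups correspond respectively to the branching strips, the central-laminate interface cost, and the two boundary layers. With $\beta=5/4$ the sum over $k$ is dominated by its $k=0$ term, giving $\alpha^2Hh_0^4/\ell_0^3+\alpha\eps H\ell_0/h_0$. Optimizing $\ell_0\sim(\alpha h_0^5/\eps)^{1/4}$ balances these to $\alpha^{5/4}\eps^{3/4}Hh_0^{1/4}$; then optimizing $h_0\sim\alpha^{-1/5}\eps^{1/5}L^{4/5}$ against the central-interface term produces the scaling $\alpha^{6/5}\eps^{4/5}L^{1/5}H$. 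When the unconstrained optimal $h_0$ exceeds $H$, I would instead take $h_0=H$ (a single central period): the central-interface term becomes $\alpha\eps L$, accounting for the additive $\alpha\eps L$ in the claimed bound.

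The main obstacle I anticipate is bookkeeping rather than algebra: I must check that (a) the scales match at every strip boundary and on $\partial\Omega$ so the global $u$ is continuous and admissible; (b) the geometric constraints $\sum_k\ell_k\le L/2$ and $\ell_k\ge h_k$ (the hypothesis of Lemma~\ref{eqconstronerecttheo2}) hold throughout the intended parameter range, which determines the cutoff $N$; and (c) the residual $\alpha\eps H$ from the boundary layers is absorbed into the two terms of the stated bound. For (c) one verifies that $\alpha\eps H\le\alpha^{6/5}\eps^{4/5}L^{1/5}H$ precisely when $\eps\le\alpha L$, while in the complementary regime the trivial bound $\alpha^2 LH$ already dominates $\alpha\eps H$; hence no additional term appears.
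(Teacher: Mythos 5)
Your proposal is correct and is essentially the paper's own construction: the trivial map $u(x)=x$ for the $\alpha^2LH$ bound, and on each half of $\Omega$ a self-similar cascade of period-doubling cells from Lemma \ref{eqconstronerecttheo2} with widths decaying by the factor $2^{-5/4}$, closed off by the boundary layer of Lemma \ref{eqconstronerecttheo2bl}, with the coarse period optimized to $h_0\sim\alpha^{-1/5}\eps^{1/5}L^{4/5}$ (equivalently the paper's $N\sim\alpha^{1/5}H/(\eps^{1/5}L^{4/5})$ oscillations) and the residual $\alpha\eps H$ absorbed exactly as in the paper via the observation that otherwise $\eps\ge\alpha L$ and $\alpha^2LH$ dominates. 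The only deviations — an explicit central laminate and stopping the cascade at $h_N\sim\eps/\alpha$ rather than when the cells become square (the two criteria coincide at the optimal parameters), plus the need to take $h_0=H/N$ with $N\in\N$ so the periods fit, which is the bookkeeping you flagged — are cosmetic and do not change the argument.
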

\begin{proof}
Inserting $u(x)=x$, we easily obtain a construction with energy $\alpha^2
LH$. Therefore, we only need to consider the case in which the first term in
the minimum is smaller than the second. Then, necessarily,
$\eps^{4/5}\alpha^{6/5} L^{1/5}H \le \alpha^2 LH$, which is equivalent to
\begin{equation}\label{eqepsalphaL}
  \eps \le \alpha L\,.
\end{equation}

We now start the construction. We will focus on $\Omega'=(0,L/2)\times (0,H)$, the other part is analogous
(with minor sign changes in the formulas).
The basic idea in the
construction is to have coarse folds in the central part, which then refine in
a self-similar way approaching to the boundary, up to a point where a
different construction for the boundary layer is needed, see Figure \ref{fig:1}.

Let $N\in\N\setminus\{0\}$ be the number of oscillations in the center, $\theta\in(1/4,1/2)$ a
geometric factor, both will be chosen below.
We cut the domain in vertical stripes, so that in each of them, say
$(x_{i+1},x_i)\times(0,H)$, the  construction will be $h_i$-periodic in the
$y$ direction. We define
\begin{equation*}
  x_i = \frac{L}{2}\theta^{i}\quad \text{ and } \quad  h_i= \frac{H}{2^{i}N}
\end{equation*}
and  denote  the width of the $i$-th stripe by
$\ell_i=\theta^{i}(1-\theta)\frac{L}{2}$. 

Since the construction in Lemma \ref{eqconstronerecttheo2} requires
$h_i \leq \ell_i$, we need to stop
this procedure at a finite $\tau$, defined as the largest integer $i$ with $h_i\leq \ell_i$.
Further, we need to choose $N$ such that $H/N\le (1-\theta)L/2$, to be able to start.

For $i\in [0,\tau]\cap \N$ and $y\in (0,H)$, we set
\begin{equation}\label{eqvperiodiclines}
  v(x_i,y)=\vii{x_i}{y+\alpha \vartheta_{h_i}(y)}\,.
\end{equation}
This permits to treat each refinement step separately. In each stripe 
of the form $(x_{i+1},x_i)\times(0,H)$, $i=0, \dots, \tau-1$, we use
$N2^i$ times the construction of Lemma \ref{eqconstronerecttheo2}.  
In the set $(0,x_\tau)\times (0,L)$ we use the same procedure with Lemma
\ref{eqconstronerecttheo2bl}. The result is continuous thanks to 
(\ref{eqvperiodiclines}).

It remains to estimate the energy. This is given by the sum over the stripes
of $N2^i$ times the energy in each rectangle plus the boundary energy, which
on the boundary of each rectangle is controlled by $\eps\|Dv-\Id\|_{L^\infty}
(h_i+\ell_i)\le c \alpha \eps \ell_i$. We obtain
\begin{equation*}
 E_2^\eps[u,\Omega'] \leq c\sum_{i=0}^{\tau-1} 2^iN \left( 
\alpha^2 \frac{h_i^5}{\ell_i^3} + \alpha \eps \ell_i\right) + c N2^\tau
\left(\alpha^2 h_\tau\ell_\tau + \alpha \eps \ell_\tau\right) \,.
\end{equation*}
By definition of $\tau$, we have $h_\tau\le\ell_\tau\le 2 h_\tau$, therefore, the
boundary term is comparable to the $i=\tau$ term in the series and can be
included in the series. 
Using 
the definitions of $h_i$ and $\ell_i$, the estimate becomes
\begin{equation*}
 E_2^\eps[u,\Omega'] \leq c\sum_{i=0}^{\tau}\left(
  \frac{\alpha^2H^5}{2^{4i}N^4\theta^{3i}L^3(1-\theta)^3} +
  \alpha\varepsilon2^{i}N\theta^{i}(1-\theta)L \right)\,.
\end{equation*}
If $\theta\in
(2^{-4/3},2^{-1})$, both $\sum_i 1/(2^4\theta^3)^{i}$ and $\sum_i (2\theta)^i$
are converging geometric series.
The precise choice of $\theta$ in this interval does not influence the scaling, but only the prefactor, which we did not compute explicitly. In principle, one could choose the value of $\theta$ that makes the energy smallest, as was discussed in \cite{B9_ChanDiss}. However, this is not the only parameter that one could minimize. For example,
it is not clear that  the optimal aspect ratios $\ell_i/h_i$ should take the form $\ell_i/h_i=\theta^i$. Further, in  converging geometric series, the initial terms play a dominant role, and it is reasonable to expect the first ones to be special. The energy of the  $i=0$  step, for example, could be lowered by using a laminate instead of a branching pattern (branching helps to reduce the energy of the ``next'' step, but for $i=0$ there is no ``next'' step). 
%This question has been analyzed in some detail in \cite{DondlRumpf}.
Since we are only interested in the scaling,  we ignore the optimization of $\theta$ and,  for definiteness,   choose an arbitrary  value. 

Setting $\theta = 2^{-5/4}$, we obtain
\begin{equation*}%\label{US:K2:Ende:2}
 E_2^\eps[u,\Omega']\leq c \left(\frac{\alpha^2H^5}{N^4L^3} + \alpha\varepsilon NL\right) . 
\end{equation*}
We choose $N$ so that the sum is smallest,
subject to the constraints $N\ge 1$, $N\in\N$ and $N\ge 4H/L$ (as above, we only need to obtain the optimal exponents, not the prefactors). 
Precisely, we set
 $N=\lceil \alpha^{1/5}H/(\varepsilon^{1/5}L^{4/5})+4H/L\rceil$, where
$\lceil t\rceil=\min\{z\in\Z: z\ge t\}$. 
Estimating $N\ge \alpha^{1/5}H/(\varepsilon^{1/5}L^{4/5})$ in the first term
and $N\le \alpha^{1/5}H/(\varepsilon^{1/5}L^{4/5})+4H/L+1$ in the second one,
we 
conclude
\[
 E_2^{\varepsilon}[u,\Omega'] \leq c \alpha^{6/5}\varepsilon^{4/5}HL^{1/5} +c
 \alpha\eps L+ c\alpha\eps H\,.
\]
Including the set $\Omega\setminus\Omega'$ only doubles the energy. 
By (\ref{eqepsalphaL}), we see that the term $\alpha \eps H$ can be absorbed
into the first one, and the proof is concluded.
\end{proof}

%%%%%%%%%%%%%%%%%%%%%%%%%%%%%%%%%%%%%%%%%%%%%%%%%%
\subsection{Proof of the upper bound in Theorem \ref{Theorem:K1}}
\label{secubtheo1}
The proof is similar to the one of  Theorem \ref{Theorem:K2}, but the
construction of the period-doubling step in Lemma \ref{eqconstronerecttheo2} becomes simpler. We only
discuss the differences. The presence of two rank-one connections, however, leads
in this case to the presence of two competing constructions, which are
essentially 90-degrees rotated copies of each other. The relation is discussed
in the proof of Lemma \ref{lemmaUBK1} below.

\begin{lemma}\label{eqconstronerecttheo1}
  Let $0<h\le\ell$, $\omega=(x_0,x_0+\ell)\times(y_0,y_0+h)$, $\alpha\in(0,1)$, $\eps>0$. Then there is a function 
  $v:\omega\to\R^2$ such that
  \begin{equation*}
    E_1^\eps[v, \omega]\le 
    c \left(\alpha^2\frac{h^3}{\ell} +  \alpha \eps \ell\right)\,,
  \end{equation*}
  with 
$\|Dv-\Id\|_{L^\infty}\le c\alpha$  and with
the boundary conditions $v(x,y)=(x,y)$ for $y\in\{y_0,y_0+h\}$,
$v(x_0,y)=(x_0+\alpha \vartheta_{h/2}(y-y_0),y)$, 
$v(x_0+\ell,y)=(x_0+\ell+\alpha\vartheta_{h}(y-y_0)),y)$.
The constant $c$ is universal.
\end{lemma}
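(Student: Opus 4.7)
The plan is to mirror the strategy of Lemma \ref{eqconstronerecttheo2} with the roles of the two components exchanged, exploiting the structural difference that $A_1$ and $B_1$ now differ only in the $(1,2)$ entry (so oscillation is carried by $v_1$ rather than $v_2$). Since the wells share the second column, I will simply set $v_2(x,y)=y$ and build $v_1$ by prescribing $\partial_2 v_1=\pm\alpha$ on the familiar five curved strips.

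Concretely, after reducing to $x_0=y_0=0$, I reuse the interpolation function $\gamma$ and the sets $\omega_1,\dots,\omega_5$ from Lemma \ref{eqconstronerecttheo2}, and define
\begin{equation*}
v_1(x,y)-x=
\begin{cases}
\alpha y & \text{on }\omega_1,\\
-\alpha y+\alpha h\bigl(\tfrac14+\tfrac14\gamma(x/\ell)\bigr) & \text{on }\omega_2,\\
\alpha y-\tfrac{\alpha h}{2} & \text{on }\omega_3,\\
-\alpha y+\alpha h\bigl(\tfrac34-\tfrac14\gamma(x/\ell)\bigr) & \text{on }\omega_4,\\
\alpha y-\alpha h & \text{on }\omega_5,
\end{cases}
\end{equation*}
together with $v_2(x,y)=y$. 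Continuity across the four curved interfaces is automatic from the definition (the piecewise affine expressions match on each level curve $y=\frac{ih}{8}\pm\frac{h}{8}\gamma(x/\ell)$). The boundary conditions at $y\in\{0,h\}$ reduce to $v=(x,y)$ by inspection, while at $x=0$ (resp.\ $x=\ell$) I use $\gamma(0)=0$ (resp.\ $\gamma(1)=1$) to check that $v_1-x$ traces exactly the piecewise linear sawtooth $\alpha\vartheta_{h/2}(y)$ (resp.\ $\alpha\vartheta_h(y)$), noting that on $x=\ell$ the strip $\omega_3$ collapses so that $\omega_2$ and $\omega_4$ fit together.

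Next I estimate the elastic energy. On $\omega_1,\omega_3,\omega_5$ one has $Dv=B_1\in K_1$ exactly, contributing nothing. On $\omega_2$ and $\omega_4$ a direct computation gives $\partial_2 v_1=-\alpha$, $\partial_1 v_2=0$, $\partial_2 v_2=1$, and $\partial_1 v_1=1+O(\alpha h\gamma'/\ell)$, so $|Dv-A_1|\le c\alpha h/\ell$ pointwise. Unlike in the $K_2$ case, no rotation improves this: since $A_1-\Id$ is purely off-diagonal, rotating $A_1$ by $\varphi$ perturbs the $(1,2)$ entry by a first-order amount $\varphi$, so any attempt to absorb the $\alpha h/\ell$ diagonal error through a rotation would introduce an error of the same order in another entry. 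Integrating the square yields
\[
\int_{\omega_2\cup\omega_4}\dist^2(Dv,K_1)\,dxdy\le c\alpha^2\frac{h^2}{\ell^2}\cdot h\ell=c\alpha^2\frac{h^3}{\ell},
\]
the target scaling.

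For the interfacial term, $|D^2 v|\le c\alpha h/\ell^2$ inside each strip (through $\gamma''/\ell^2$), contributing at most $c\eps\alpha h^2/\ell\le c\eps\alpha\ell$ after integration because $h\le\ell$; the jump contributions along the four curved interfaces are controlled by $2\|Dv-\Id\|_{L^\infty}$ times a total length of order $\ell+h$, which is again $\le c\eps\alpha\ell$. The bound $\|Dv-\Id\|_{L^\infty}\le c\alpha$ follows from the explicit formula together with $\|\gamma'\|_{L^\infty}\le c$ and $h\le\ell$. Adding the two contributions gives $E_1^\eps[v,\omega]\le c(\alpha^2 h^3/\ell+\alpha\eps\ell)$. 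There is no genuine obstacle here beyond bookkeeping; the simplification over Lemma \ref{eqconstronerecttheo2} is precisely that the weaker target $h^3/\ell$ (as opposed to $h^5/\ell^3$) does not require the delicate linearization and interaction between components that was needed in the $K_2$ case.
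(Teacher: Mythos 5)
Your construction is exactly the one in the paper's proof: $v_2(x,y)=y$ together with the same piecewise-affine $v_1$ on the five strips $\omega_1,\dots,\omega_5$, the same pointwise bound $\dist(Dv,K_1)\le c\alpha h/\ell$ on $\omega_2\cup\omega_4$, and the same treatment of the jump and $\gamma''$ contributions to the surface term, so the proposal is correct and takes essentially the same approach. (Incidentally, your coefficient $\tfrac{\alpha h}{4}\gamma$ on $\omega_4$ is the one forced by continuity across $\partial\omega_3$, $\partial\omega_5$ and by the boundary datum $\alpha\vartheta_h$ at $x=x_0+\ell$; the factor $2$ in the paper's displayed formula for $\omega_4$ is a typo.)
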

\begin{proof}
Under the same assumptions as in Lemma \ref{eqconstronerecttheo2}, and with
the same domain subdivision, we define 
\[
v_1(x,y)=
\begin{cases}
 x+\alpha y & \text{ on }\omega_1, \\
 x-\alpha y +\alpha \frac{h}{4}+\alpha\frac h4\gamma(\frac x\ell) & \text{ on }\omega_2, \\
 x+\alpha y - \frac{\alpha h}{2} & \text{ on }\omega_3, \\
 x-\alpha y + \frac{3\alpha h}{4} -2 \alpha\frac h4 \gamma(\frac x\ell) & \text{ on }\omega_4, \\
 x+\alpha y -\alpha h & \text{ on }\omega_5.
\end{cases}
\]
In this case, the strain terms arising from the derivatives of $\gamma$ lie in 
the diagonal entry $\partial_1v_1$ 
 of $Dv$. Therefore their energy contribution cannot
be reduced using the other component and the rotations, hence we can
simply take $v_2(x,y)=y$.
The estimate (\ref{eqdval}) is optimal, and correspondingly we
obtain $\dist(Dv,K)\le c\alpha h/\ell$ everywhere. The strain gradient term
can be estimated in the same way  as in Lemma \ref{eqconstronerecttheo2}.
\end{proof}

The boundary layer is also similar.
\begin{lemma}\label{eqconstronerecttheo1bl}
  Let $0<h\le\ell$, $\omega=(x_0,x_0+\ell)\times(y_0,y_0+h)$, $\alpha\in(0,1)$, $\eps>0$. Then there is a function 
  $v:\omega\to\R^2$ such that
  \begin{equation*}
    E_1^\eps[v, \omega]\le 
    c \left(\alpha^2h\ell +  \alpha \eps \ell\right)\,,
  \end{equation*}
  with
$\|Dv-\Id\|_{L^\infty}\le c\alpha$  and with
 the boundary conditions $v(x,y)=(x,y)$ for $y\in\{y_0,y_0+h\}$ or $x=x_0$,
$v(x_0+\ell,y)=(x_0+\ell+\alpha\vartheta_{h}(y-y_0)),y)$.
The constant $c$ is universal.
\end{lemma}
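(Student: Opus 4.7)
The plan is to transfer the boundary-layer construction of Lemma \ref{eqconstronerecttheo2bl} from the $K_2$ setting to the $K_1$ setting. In the $K_2$ case the wells differ by $\pm\alpha\, e_2\otimes e_2$ and one builds the nontrivial oscillation into $v_2$; here the wells $A_1,B_1$ differ by the shear $\pm 2\alpha\, e_1\otimes e_2$, so the sawtooth prescribed at $x=x_0+\ell$ now sits in the first component $v_1$. I therefore set $v_2(x,y)=y$ throughout and build $v_1$ by a piecewise-affine (in $y$) formula on the same five-region subdivision $\omega_B',\omega_M',\omega_A,\omega_M'',\omega_B''$ used in Lemma \ref{eqconstronerecttheo2bl}.

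On the three ``pure'' regions I choose $v_1$ linear in $y$ with slope $\pm\alpha$, so that $Dv\in\{A_1,B_1\}$: namely $v_1=x+\alpha y$ on $\omega_B'$, $v_1=x-\alpha y+\alpha h/2$ on $\omega_A$, and $v_1=x+\alpha(y-h)$ on $\omega_B''$. On the two ``transition'' regions I take $v_1$ independent of $y$, namely $v_1=x+\alpha h\gamma(x/\ell)/4$ on $\omega_M'$ and $v_1=x-\alpha h\gamma(x/\ell)/4$ on $\omega_M''$. A short direct computation shows that $v_1$ is continuous across the four internal interface curves. At $x=x_0$ one has $\gamma(0)=0$, so only $\omega_M'$ and $\omega_M''$ survive and $v_1=x$, giving the Dirichlet datum; at $x=x_0+\ell$, using $\gamma(1)=1$, the regions reduce to a three-band laminate of widths $h/4,h/2,h/4$, and comparison with $\vartheta_h$ on the subintervals $(0,h/4)$, $(h/4,3h/4)$, $(3h/4,h)$ yields the prescribed trace $\alpha\vartheta_h(y-y_0)$. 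The trivial boundary conditions $v(x,y_0)=(x,y_0)$ and $v(x,y_0+h)=(x,y_0+h)$ also hold by inspection.

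For the energy, on $\omega_B',\omega_A,\omega_B''$ the gradient lies in $K_1$, so the elastic density vanishes. On $\omega_M'\cup\omega_M''$ we have $Dv=\diag(1+\alpha h\gamma'(x/\ell)/(4\ell),1)$, which differs from the identity by at most $c\alpha$ (using $h\le\ell$); since $\dist(\Id,K_1)=\alpha$, this gives $\dist^2(Dv,K_1)\le c\alpha^2$, and integration over an area bounded by $h\ell$ produces the elastic contribution $c\alpha^2 h\ell$. The bound $\|Dv-\Id\|_{L^\infty}\le c\alpha$ follows by inspection. The interfacial energy arises from jumps across the four interface curves of length $O(\ell)$ with jump size $O(\alpha)$, giving $c\alpha\eps\ell$, while the smooth contribution $|D^2v|=O(\alpha h/\ell^2)$ in the middle regions integrates to the subdominant term $O(\eps\alpha h^2/\ell)\le O(\eps\alpha\ell)$. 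There is no real obstacle beyond bookkeeping: unlike in Lemma \ref{eqconstronerecttheo2bl}, no rotational cancellation between the two components is needed, because in the $K_1$ case the incompatibility $|A_1v|-|B_1v|$ is already linear in $|v-e_1|$ rather than quadratic, which is the same reason that the period-doubling Lemma \ref{eqconstronerecttheo1} for $K_1$ can keep $v_2(x,y)=y$.
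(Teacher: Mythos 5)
Your construction is correct and coincides with the paper's own proof: the paper uses exactly the same five-region subdivision from Lemma \ref{eqconstronerecttheo2bl}, takes $v_2(x,y)=y$, and defines $v_1$ by precisely the formulas you give (affine in $y$ with slope $\pm\alpha$ on $\omega_B',\omega_A,\omega_B''$ and $x\pm\frac14\alpha h\gamma(x/\ell)$ on $\omega_M',\omega_M''$), with the same continuity, trace and energy bookkeeping. The only cosmetic remark is that $\dist(\Id,K_1)$ is of order $\alpha$ but not exactly $\alpha$; since you only use it as an upper bound ($\le\alpha$, from $|\Id-A_1|=\alpha$), this does not affect the argument.
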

\begin{proof}
We use the domain subdivision from Lemma \ref{eqconstronerecttheo2bl} with
\begin{equation*}%\label{K2:RF}
  v_1(x,y)=
 \begin{cases}
   x+\alpha y & \text{on $\omega_B'$}, \\
 x+\frac{1}{4} \alpha h\gamma\left(\frac{x}{\ell}\right) & \text{on
   $\omega_M'$}, \\ 
 x+\alpha(\frac{h}{2}-y) & \text{on $\omega_A$},\\
 x-\frac{1}{4}\alpha h\gamma\left(\frac{x}{\ell}\right) & \text{on
   $\omega_M''$}, \\ 
 x+\alpha (y-h) & \text{on $\omega_B''$}, 
 \end{cases}
\end{equation*}
and estimate the various energy terms as in Lemma \ref{eqconstronerecttheo1}.
\end{proof}
We remark that in both 
 Lemma \ref{eqconstronerecttheo1bl} and  Lemma \ref{eqconstronerecttheo1}  a
 simpler choice of $\gamma$ (for example, $\gamma(x)=x$) would have been
 possible, since no derivatives of $\gamma$ enter the definition of
 $v$. 
\begin{lemma}\label{lemmaUBK1}
  Under the assumptions of Theorem \ref{Theorem:K1}, there are two functions $u,v\in\calM$
  such that
\begin{equation*}
  E_1^\eps[u,\Omega]\le c \min\Bigl\{\alpha^{4/3} \eps^{2/3} H L^{1/3}+\alpha\eps
  L, \alpha^2 LH\Bigr\}
\end{equation*}
and
\begin{equation*}
  E_1^\eps[v,\Omega]\le c \min\Bigl\{\alpha^4 LH +  \alpha^{4/3} \eps^{2/3} L
  H^{1/3}+\alpha\eps H,
\alpha^2 L H\Bigr\}\,.
\end{equation*}
The constant $c$ is universal.
\end{lemma}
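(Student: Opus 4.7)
The bound splits into two independent constructions. For $u$ we mimic the proof of Lemma \ref{lemmaUBK2} essentially verbatim, with Lemma \ref{eqconstronerecttheo1} and Lemma \ref{eqconstronerecttheo1bl} playing the role of the $K_2$ building blocks; the Kohn--Müller scaling $\alpha^{4/3}\eps^{2/3}HL^{1/3}$ then comes from the same optimization. For $v$ we use a vertical variant of the same scheme, exploiting the fact that the matrices $M^\pm=\Id\pm\alpha e_2\otimes e_1$, which are the transposes of $B_1,A_1$, lie at distance $O(\alpha^2)$ from $K_1$; the resulting pointwise bulk discrepancy is responsible for the extra $\alpha^4 LH$ term that distinguishes regime VB from the purely branched ones.

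\emph{Construction of $u$.} Set $\Omega'=(0,L/2)\times(0,H)$, fix $\theta\in(1/4,1/2)$, and tile $\Omega'$ with vertical stripes of widths $\ell_i=(L/2)(1-\theta)\theta^i$, each subdivided into $N2^i$ rectangles of heights $h_i=H/(N2^i)$. Apply Lemma \ref{eqconstronerecttheo1} on each sub-rectangle, and Lemma \ref{eqconstronerecttheo1bl} on the innermost stripe $i=\tau$, where $\tau$ is the largest index with $h_i\le\ell_i$. The internal boundary prescription (\ref{eqvperiodiclines}) together with $h_i\mid H$ gives continuity and matches the datum $u=\Id$ on $\partial\Omega$. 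Summation yields
\begin{equation*}
E_1^\eps[u,\Omega']\le c\sum_{i=0}^{\tau}\left(\frac{\alpha^2 H^3}{2^{2i}N^2\theta^i L(1-\theta)}+\alpha\eps N 2^i\theta^i(1-\theta)L\right),
\end{equation*}
two geometric series converging precisely for $\theta\in(1/4,1/2)$ and bounded by $c\alpha^2 H^3/(N^2 L)+c\alpha\eps NL$. Choosing $N=\lceil \alpha^{1/3}H/(\eps^{1/3}L^{2/3})+4H/L\rceil$ (with $4H/L$ ensuring $h_0\le\ell_0$) yields $E_1^\eps[u,\Omega]\le c(\alpha^{4/3}\eps^{2/3}HL^{1/3}+\alpha\eps L+\alpha\eps H)$. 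When this is smaller than $\alpha^2 LH$ one easily verifies $\eps\le\alpha L$, which absorbs $\alpha\eps H$ into $\alpha^{4/3}\eps^{2/3}HL^{1/3}$; otherwise $u(x)=x$ attains the bound $\alpha^2 LH$ trivially.

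\emph{Construction of $v$.} A short algebraic check gives
\begin{equation*}
(M^\pm)^T M^\pm -B_1^T B_1=\pm\alpha^2\diag(1,-1),
\end{equation*}
so that $M^\pm B_1^{-1}$ has determinant $1$ and symmetric part $\Id+O(\alpha^2)$; by polar decomposition $\dist(M^\pm,K_1)\le c\alpha^2$. Let $J$ be the involution $(a,b)\mapsto(b,a)$ and let $\tilde u:(0,H)\times(0,L)\to\R^2$ be the construction from the first part carried out on the transposed rectangle. Define $v(x,y)=J\tilde u(y,x)$ on $\Omega$; then $Dv(x,y)=J\,D\tilde u(y,x)\,J$, so in every laminar sub-rectangle $Dv\in\{M^+,M^-\}$. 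Since $JK_1J$ differs pointwise from $K_1$ by at most $c\alpha^2$ (by $SO(2)$-invariance, this reduces to the estimate above on $M^\pm$), one has
\begin{equation*}
\dist(Dv,K_1)\le\dist(D\tilde u,K_1)+c\alpha^2.
\end{equation*}
Squaring, integrating, and summing as in the first part with $L\leftrightarrow H$ contributes $c(\alpha^{4/3}\eps^{2/3}LH^{1/3}+\alpha\eps H+\alpha\eps L)$, while the second term integrates to the uniform bulk cost $c\alpha^4 LH$. The surface energy is invariant under $J$ and contributes nothing new. As above, when the overall bound is below $\alpha^2 LH$ one has in particular $\eps\le\alpha H$, hence $\alpha\eps L\le\alpha^{4/3}\eps^{2/3}LH^{1/3}$, giving the claimed estimate; otherwise $v(x)=x$ attains $\alpha^2 LH$.

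\emph{Expected obstacle.} The single non-mechanical step is the $O(\alpha^2)$ bulk discrepancy between $JK_1J$ and $K_1$: this is the rotation-invariant signature of the second rank-one connection of $K_1$ and is exactly what produces the $\alpha^4 LH$ term in the bound for $v$. Once this polar-decomposition estimate on $M^\pm$ is isolated, the remainder of the argument is a direct adaptation of Lemma \ref{lemmaUBK2}, with the sole bookkeeping task of verifying that the extra boundary-layer term $\alpha\eps L$ generated by the finite constraint $N\ge 4L/H$ is dominated by $\alpha^{4/3}\eps^{2/3}LH^{1/3}$ whenever the nontrivial bound is the effective one.
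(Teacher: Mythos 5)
Your proposal is correct and follows essentially the same route as the paper: the construction of $u$ is the paper's branching scheme built from Lemmas \ref{eqconstronerecttheo1} and \ref{eqconstronerecttheo1bl} with the identical choice of $\theta$-range and $N$, and the construction of $v$ is the paper's conjugation by the swap map $Z$ on the transposed rectangle, with the extra $\alpha^4 LH$ term coming from $\dist(ZK_1Z,K_1)\le c\alpha^2$. The only cosmetic difference is that you verify $\dist(M^\pm,K_1)\le c\alpha^2$ via the identity $(M^\pm)^TM^\pm-B_1^TB_1=\pm\alpha^2\diag(1,-1)$ and polar decomposition, whereas the paper exhibits the explicit rotation $R_a$; these are equivalent computations.
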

\begin{proof}
The construction of $u$  is identical to the one discussed in Lemma \ref{lemmaUBK2},
with Lemma \ref{eqconstronerecttheo1bl} instead of Lemma
\ref{eqconstronerecttheo2bl}
and  Lemma \ref{eqconstronerecttheo1} instead of Lemma
\ref{eqconstronerecttheo2}, only the choices of $\theta$ and $N$ differ.
In particular, one  obtains
\begin{equation*}
 E_1^\eps[u,\Omega'] \leq c\sum_{i=0}^{\tau}\left(
  \frac{\alpha^2H^3}{2^{2i}N^2\theta^{i}L(1-\theta)} +
  \alpha\varepsilon2^{i}N\theta^{i}(1-\theta)L \right)\,.
\end{equation*}
In this case, the series converges for all $\theta\in (1/4,1/2)$.
We set,  arbitrarily, $\theta=1/3$ and obtain
\begin{equation*}
 E_1^\eps[u,\Omega']\leq c \left(\frac{\alpha^2H^3}{N^2L} + \alpha\varepsilon NL\right) . 
\end{equation*}
We choose  $N=\lceil \alpha^{1/3}H/(\varepsilon^{1/3}L^{2/3})+4H/L\rceil$ and conclude
\[
 E_1^{\varepsilon}[u,\Omega] \leq c \alpha^{4/3}\varepsilon^{2/3}HL^{1/3}+c\alpha\eps (L+H)\,.
\]
The treatment of the bound $\alpha^2LH$ and the elimination of the term
$\alpha \eps H$ are analogous to the other case.

We now turn to $v$. In the case $K_1$ there are two rank-one connections, and if $H$ is much
larger than $L$, it is convenient to use a rotated pattern with long thin stripes along $e_2$. 
This construction can be obtained by rotating the one just
derived. We set
\[
Z=\mii{0}{1}{1}{0}
\]
and identify it with a linear map from $\R^2$ to $\R^2$. 
Let $u$ be the function constructed above, using $\tilde
H=L$, $\tilde L=H$. Then $u$ is the identical map on the boundary of
$\tilde\Omega=(0,H)\times (0,L)$. We set
\begin{equation*}
  v(x,y)=Z u (Z(x,y))\,.
\end{equation*}
Then $v$ obeys the required boundary conditions on $\partial\Omega$ and 
$|D^2v|(\Omega)=|D^2u|(\tilde\Omega)$.  In order to estimate the strain
energy, we observe that $Dv = Z Du (Z(x,y))Z$. We assert that for all $F\in
\R^{2\times 2}$ one has
\begin{equation}\label{eqdistzfza}
  \dist(ZFZ,K)\le \dist(F,K)+\alpha^2\,.
\end{equation}
This  then implies
\begin{equation*}
  E_1^\eps[v,\Omega]\le 2 E_1^\eps[u,\tilde\Omega] + 2\alpha^4 LH\,,
\end{equation*}
as required.

It remains to prove (\ref{eqdistzfza}). 
We first show that there is a rotation $R_a\in \SO(2)$
such that $|R_aZA_1Z-A_1|\le\alpha^2$, and correspondingly for $B_1$.
We set
\begin{equation*}
  R_a=\frac{1}{\sqrt{1+\alpha^2}}\mii{1}{-\alpha}{\alpha}{1}\in\SO(2)
\end{equation*}
and compute
\begin{equation*}
  R_aZA_1Z=\mii{\sqrt{1+\alpha^2}}{-\alpha/\sqrt{1+\alpha^2}}{0}{1/\sqrt{1+\alpha^2}}\,,
\end{equation*}
which gives $|R_aZA_1Z-A_1|\le\alpha^2$ for all $\alpha\in (-1,1)$.
Let now $Q\in \SO(2)$, $F\in \R^{2\times 2}$ be such that
$\dist(F,K)=|F-QA_1|$ (the case with $B_1$ is the same). 
We compute
\begin{alignat*}1
\dist(ZFZ,K)&\le |ZFZ-ZQA_1Z| + \dist(ZQA_1Z, K)\\
&= |F-QA_1| + \dist (ZA_1Z,K)
\le \dist(F,K)+\alpha^2\,,
\end{alignat*}
where we used  that $Z\in \OO(2)\setminus \SO(2)$ implies
$ZQZ\in \SO(2)$, and obtain (\ref{eqdistzfza}). 
\end{proof}

%\clearpage
\section{Lower bounds}
\label{seclowerbound}
\subsection{General Strategy}
As in many singularly perturbed nonconvex problems, the 
 lower bound arises by the interaction of the following effects: the second-gradient
 term limits the oscillations of the gradient in the interior of the domain, 
 and therefore brings the deformation $u$ away from the interpolation of the
 boundary data. The strain term then makes a deviation of the deformation
 from the interpolation of the boundary data expensive. The main point in the proof is
 to make these effects  quantitative; in the present case, the main difficulty
 resides in the gradient term, which does not control individual derivatives
 but only the deviation of the  deformation gradient from the set of
 matrices $K_j$. 

 In order to make the strategy quantitative, it is convenient to localize at
 the appropriate length scale. We shall denote by $\lambda$ a length scale
 chosen at the end of the proof. Qualitatively, it can be understood as a
 typical length scale of the microstructure in the interior of the domain. 
 Since energy can concentrate, we shall choose (in Lemma \ref{Lemma:Epart}\ref{Lemma:Epart1})
 a square of side length $\lambda$ as well as a horizontal and a vertical stripe of width
 $\lambda$ on  which the energy is no higher than
on average. Then we shall show  (Lemma \ref{Lemma:Poincare}\ref{Lemma:Poincare1})
 that on this square any low-energy deformation is approximately affine, with
 gradient in $K_j$. In order to relate this to the  boundary data, exploiting
 the elastic energy on the stripes, we need to eliminate rotations from the
 picture. This is done in Lemma
 \ref{lemmaaverage0}, which shows that if a vector has length approximately
 unity, and one component averages (because of the boundary data) to 1, then
 the vector is approximately constant. We then separate the two different
 settings we consider. In the case
$K_1$, addressed in Section  \ref{seccaseK1}, the key remaining difficulty is
the treatment of the two possible orientations for the microstructure, which
requires a separation of two cases. In the case $K_2$, addressed in Section
\ref{seccaseK2}, the key difficulty is in the treatment of the
rotational invariance of the elastic energy and the degeneracy of the rank-one connection. As
we know from the upper bound, see the discussion in (\ref{eqlinearize}), the two
components can interact to substantially reduce the energy. 
Therefore,
an optimal lower
bound will require a careful treatment of this interaction. This
will be done using a suitable test function to integrate twice by parts, see
Lemma \ref{lemmaLBK2} below. In this section, we use $x=(x_1,x_2)$ for an
element of $\R^2$.
 
\begin{lemma}[Localization]\label{Lemma:Epart}\label{Lemma:Poincare}
Assume $H,L>0$, $j\in\{1,2\}$, $u\in \calM$, $\alpha\in (0,1)$, $\eps>0$.
For any $\lambda\in (0,\min\{L,H\}]$, there are stripes
$S=(0,L)\times (s,s+\lambda )\subset\Omega$, $S' = (s',s'+\lambda)\times
(0,H)\subset\Omega$ such that, writing for brevity
$E_j=E_j^\eps[u,\Omega]$, the following holds:
\begin{enumerate}
\item\label{Lemma:Epart1} 
\begin{alignat}1
\label{epart1}
E_j^{\varepsilon}[u,S]&\leq c\frac{\lambda}{H}  E_j, \\
\label{epart2}
E_j^{\varepsilon}[u,S']&\leq c \frac{\lambda}{L}  E_j, \\
\label{epart3}
E_j^{\varepsilon}[u,S\cap S']&\leq c  \frac{\lambda^2}{LH}  E_j,\\
\label{epart4}
\|\partial_1 u_2\|_{L^1(S\cap S')} &\leq c\frac{\lambda^2}{LH}  
\|\partial_1 u_2\|_{L^1(\Omega)}\,.
\end{alignat}
\item\label{Lemma:Poincare1}
There are  $ F\in
K_j$ and $a\in\R^2$ such that
\begin{equation}\label{Poincare}
\int_{S\cap S'}|Du-F|\,dx \leq
c\frac{\lambda^3}{\varepsilon LH } E_j
+ c \frac{\lambda^2}{L^{1/2}H^{1/2}} E_j^{1/2}
\end{equation}
and
\begin{equation}\label{Poincare2}
\int_{S\cap S'}|u(x)-Fx-a|\,dx \leq
c\frac{\lambda^4}{\varepsilon LH } E_j
+ c \frac{\lambda^3}{L^{1/2}H^{1/2}} E_j^{1/2}. 
\end{equation}
\end{enumerate}
The constant $c$ is universal.
\end{lemma}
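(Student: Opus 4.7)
The plan is: part (i) by a Fubini--Chebyshev averaging argument that simultaneously enforces four bounds; part (ii) by combining a $BV$--Poincar\'e inequality on the square $S\cap S'$ with the standard Poincar\'e inequality in $W^{1,1}$.

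For part (i), Fubini gives
\[
\int_0^{H-\lambda} E_j^\eps[u,(0,L)\times(s,s+\lambda)]\,ds\le \lambda E_j,
\]
with the analogous bound by $\lambda E_j$ for the integral over $s'\in(0,L-\lambda)$, by $\lambda^2 E_j$ for the iterated integral corresponding to $S\cap S'$, and by $\lambda^2\|\partial_1 u_2\|_{L^1(\Omega)}$ for the $L^1$-norm quantity. Chebyshev's inequality then shows that, for each of the four bounds, the set of parameters at which the desired inequality fails by a factor $K$ has measure at most a $1/K$ fraction of the admissible rectangle $(0,H-\lambda)\times(0,L-\lambda)$. Choosing $K$ large enough (say $K=20$), the union of the four bad sets has measure strictly less than that of the rectangle, provided $\lambda\le\tfrac12\min\{L,H\}$ (which ensures the rectangle has area at least $LH/4$), and any pair $(s,s')$ outside this union works. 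The residual regime $\lambda>\tfrac12\min\{L,H\}$ is handled trivially by taking $S$ and/or $S'$ to coincide with $\Omega$, absorbing the resulting factor into~$c$.

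For part (ii), set $\omega:=S\cap S'$, a square of side $\lambda$, and denote by $\bar G:=\lambda^{-2}\int_\omega Du\,dx$ the average gradient. From part (i) we have $M:=|D^2u|(\omega)\le c\lambda^2/(\eps LH)\cdot E_j$, and from \eqref{epart3} combined with Cauchy--Schwarz,
\[
\int_\omega \dist(Du,K_j)\,dx\le \lambda\Bigl(\int_\omega\dist^2(Du,K_j)\,dx\Bigr)^{1/2}\le \frac{c\lambda^2}{(LH)^{1/2}}E_j^{1/2}.
\]
The $BV$--Poincar\'e inequality on $\omega$, applied componentwise to $Du$, gives $\int_\omega|Du-\bar G|\,dx\le c\lambda M$. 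The triangle inequality then yields
\[
\lambda^2\dist(\bar G,K_j)\le \int_\omega|Du-\bar G|\,dx+\int_\omega\dist(Du,K_j)\,dx\le c\lambda M+\frac{c\lambda^2}{(LH)^{1/2}}E_j^{1/2}.
\]
Picking $F\in K_j$ with $|\bar G-F|=\dist(\bar G,K_j)$ and summing with the previous estimate proves \eqref{Poincare}. For \eqref{Poincare2}, apply the standard $W^{1,1}$ Poincar\'e inequality on $\omega$ to $w:=u-Fx$: with $a:=\lambda^{-2}\int_\omega w\,dx$ one has $\int_\omega|w-a|\,dx\le c\lambda\int_\omega|Du-F|\,dx$, and multiplying \eqref{Poincare} by $\lambda$ gives \eqref{Poincare2}.

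The main technical obstacle is the simultaneous selection in part (i): four distinct conditions must be imposed for the same pair $(s,s')$ in a two-dimensional parameter region, so the Chebyshev constant must be chosen carefully, and the degenerate range $\lambda\sim\min\{L,H\}$ must be separated out. In part (ii), the only subtlety beyond the standard Poincar\'e estimates is that $K_j$ is closed but nonconvex, so the nearest-point projection of $\bar G$ onto $K_j$ need not be unique; any measurable selection works.
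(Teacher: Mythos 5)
Your proposal is correct and follows essentially the paper's own argument: part (i) is the same averaging/pigeonhole selection (the paper does it over a discrete grid of $\lfloor H/\lambda\rfloor\times\lfloor L/\lambda\rfloor$ stripes rather than by continuous Fubini--Chebyshev, which is only a cosmetic difference), and part (ii) is the identical chain of average gradient, $BV$--Poincar\'e, H\"older plus \eqref{epart3}, nearest point $F\in K_j$, and a second Poincar\'e application for \eqref{Poincare2}. The one point to tighten is the regime $\lambda>\tfrac12\min\{L,H\}$: a stripe of width $\lambda<\min\{L,H\}$ cannot literally ``coincide with $\Omega$'', and even when, say, \eqref{epart1} becomes trivial the joint conditions \eqref{epart3}--\eqref{epart4} still require selecting the other coordinate, which is settled by the same one-parameter Chebyshev argument (or avoided altogether by the paper's discrete subdivision, which works uniformly for all $\lambda\le\min\{L,H\}$).
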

%%%%%%%%%%%%%%%%%%%%%%%%%%%%%%%%%%%%%%%%%%%%
\begin{figure}[t]
\centering
\includegraphics[width=5cm]{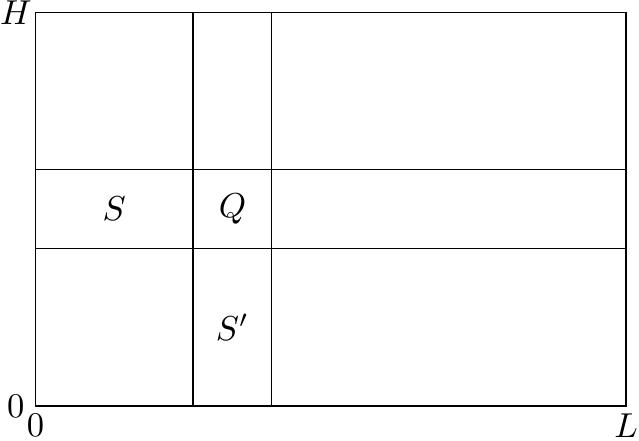}
\caption{Geometry in Lemma \ref{Lemma:Epart}.}\label{Bild:Epart}
\end{figure}
%%%%%%%%%%%%%%%%%%%%%%%%%%%%%%%%%%%%%%%%%%%% 

Before starting the proof, we recall the Poincar\'e inequality for $BV$ functions.   We shall use it only on squares, but exploit the explicit dependence of the constant on the size of the square. Precisely, there is a constant $c_P>0$ such that for any square $q_r=x_*+(0,r)^2$ and any function $v\in BV(q_r;\R^m)$ 
one has
\begin{equation}\label{eqpoincare}
  \int_{q_r} |v-\overline v| dx \le c_P \,r\, |Dv|(q_r)\,,
\end{equation}
where $\overline v\in \R^m$ is the average of $v$ over $q_r$. For $r=1$, this assertion is a special case, for example, of \cite[Th. 3.44]{AFP}; 
the same estimate for generic $r$ follows by applying the $r=1$ bound to  $v_r(x)=v(rx)$.
\begin{proof}
\ref{Lemma:Epart1}: 
We  subdivide $\Omega$ into  $M=\lfloor H/\lambda\rfloor\ge H/(2\lambda) $
disjoint horizontal stripes  
$S_k=(0,L)\times(k\lambda, (k+1)\lambda)$, where $\lfloor t\rfloor =
\max\{z\in\Z: z\le t\}$. Since
\begin{equation*}
  \sum_{k=0}^{M-1} E_j^\varepsilon[u, S_k]\le E_j\,,
\end{equation*}
we have
\begin{equation*}
  \# \Bigl\{ k\in [0,M)\cap \N: E_j^\varepsilon[u, S_k]\ge \frac5M E_j \Bigr\}\le \frac{M}{5}
\end{equation*}
(in particular, for $M<5$, the set is empty). 
Analogously, we have
$N=\lfloor L/\lambda\rfloor\ge L/(2\lambda) $
disjoint vertical stripes  
$S_i'=(i\lambda, (i+1)\lambda)\times(0,H)$, and at most  $N/5$ of them do not
satisfy the corresponding bound. 
Since for any of the ``bad'' choices of $k$ all choices of $h$ need to be eliminated, the bound  (\ref{epart1}) is satisfied by all choices of the pair $(k,i)$ except for $NM/5$. Analogously,  (\ref{epart2}) is violated 
for at most $N/5$ choices of $i$, which corresponds to at most $NM/5$ choices of the pair $(k,i)$.
Therefore, the total number of  choices
of $(k,i)$ which do not satisfy (\ref{epart1}) and (\ref{epart2}), with
$c=10$, is  no larger than $2NM/5$. 

A similar estimate leads to
\begin{equation*}
  \# \Bigl\{ (k,i)\in [0,M)\times[0,N)\cap \N^2: E_j^\varepsilon[u, S_k\cap S_i']\ge \frac5{MN} E_j \Bigr\}\le \frac{MN}{5}\,.
\end{equation*}
Hence, for all but 
$MN/5$ of the possible choices (\ref{epart3}) holds, with
$c=20$, and the same for $\|\partial_1 u_2\|_{L^1(S_k\cap S_i')}$. 
Since we have four
conditions, and each of them excludes at most $\lfloor MN/5\rfloor $ choices, there are
necessarily some left, for which all four conditions are valid.
The resulting geometry is illustrated in Figure \ref{Bild:Epart}.

\ref{Lemma:Poincare1}:
Let $F'\in\R^{2\times2}$ be the average of $Du$ over  $Q=S\cap S'$.
Writing $F'=F'-Du+Du$, we obtain  
\begin{alignat*}1
  \lambda^2 \dist(F',K)&\le \|Du-F'\|_{L^1(Q)}+ \|\dist(Du,K)\|_{L^1(Q)}\,.
\end{alignat*}
We define $F$ as the matrix in $K$ closest to $F'$ and 
write $|Du-F|\le |Du-F'|+\dist(F',K)$, so that
\begin{alignat*}1
\|Du-F\|_{L^1(Q)}&\le   \|Du-F'\|_{L^1(Q)}+\lambda^2 \dist(F',K)\\
&\le 2\|Du-F'\|_{L^1(Q)}+ \|\dist(Du,K)\|_{L^1(Q)}\,.
\end{alignat*}
Using Poincar\'e's inequality (\ref{eqpoincare}) on the first term and Hölder's inequality on the
second,  and then recalling 
 (\ref{epart3}), we obtain
\begin{alignat*}1
\|Du-F\|_{L^1(Q)}
&  \le   c\lambda |D^2u|(Q)
+\lambda\|\dist(Du,K)\|_{L^2(Q)}\\
& \le c \frac{\lambda^3}{LH\varepsilon}  E_j +
c  \lambda^2 \frac{E_j^{1/2}}{L^{1/2}H^{1/2}}\,,
\end{alignat*}
which concludes the proof of (\ref{Poincare}). 
To prove (\ref{Poincare2}), it suffices to define $a\in\R^2$ as the average of
$u(x)-Fx$ over $Q$ and apply Poincar\'e's inequality once more.
\end{proof}

The above estimates immediately give the lower bound in the case of very thin
domains. 
\begin{lemma}[Thin domains]\label{lemmathin}
Let $L,H>0$, $\alpha\in (0,1)$, $\eps>0$. 
Then for every $u\in \calM$ we have
$E_j\ge c \min\{\alpha \eps(L+H), \alpha^2 LH\}$.  
The constant $c$ is universal.
\end{lemma}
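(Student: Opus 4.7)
The plan is to apply Lemma \ref{Lemma:Epart}/Lemma \ref{Lemma:Poincare} at the scale $\lambda=\min\{L,H\}$ and then use the boundary data $u=x$ on $\partial\Omega$ together with the fact that $\dist(\Id,K_j)\ge c\alpha$ to force a lower bound on $E_j$. By the symmetry of the claim in $L$ and $H$, I may assume $H\le L$ and take $\lambda=H$; then $S=\Omega$, $S'$ is a vertical stripe of width $H$, and $Q:=S\cap S'$ is a square of side $H$ whose top and bottom edges lie on $\partial\Omega$. This produces $F\in K_j$ and $a\in\R^2$ satisfying the $L^1$-bounds (\ref{Poincare}) and (\ref{Poincare2}).

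Next I extract two estimates on the columns of $F$. First, vertical slicing: the boundary conditions give $\int_0^H\partial_2 u(x_1,t)\,dt=He_2$, so averaging in $x_1\in(s',s'+H)$ and comparing with $H^2 Fe_2$, together with (\ref{Poincare}), yields
\[
|Fe_2-e_2|\le c\,\frac{E_j}{\eps L}+c\,\frac{E_j^{1/2}}{(LH)^{1/2}}.
\]
Second, an affine-trace argument on the top edge of $Q$: the function $g(x):=u(x)-Fx-a$, restricted to $\{x_2=H\}$, is affine in $x_1$ with slope vector $e_1-Fe_1$, so its $L^1$-norm over $(s',s'+H)$ is bounded below by $\tfrac{H^2}{4}|Fe_1-e_1|$. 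Combining the pointwise inequality $|g(x_1,H)|\le |g(x_1,x_2)|+\int_{x_2}^{H}|\partial_2 u-Fe_2|\,dt$ with an average in $x_2$ and applying (\ref{Poincare}), (\ref{Poincare2}) produces the matching upper bound, of exactly the same form, with $Fe_1-e_1$ in place of $Fe_2-e_2$.

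The remaining ingredient is the incompatibility estimate $|Fe_1-e_1|+|Fe_2-e_2|\ge c\alpha$ for every $F\in K_j$. For $K_2$ this is immediate since $|Fe_2|\in\{1-\alpha,1+\alpha\}$. For $K_1$, writing $F=QA_1$ (or $QB_1$) with $Q=R_\phi$, one computes $|Fe_1-e_1|^2+|Fe_2-e_2|^2=\alpha^2+4(1-\cos\phi)\pm 2\alpha\sin\phi$, whose minimum over $\phi$ equals $\sqrt{4+\alpha^2}\bigl(\sqrt{4+\alpha^2}-2\bigr)\ge\alpha^2/2$. Combining the two upper bounds with this inequality and splitting into the two cases according to whether the $E_j/(\eps L)$ or the $E_j^{1/2}/(LH)^{1/2}$ term dominates, one obtains $E_j\ge c\min\{\alpha\eps L,\alpha^2 LH\}$. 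Since $L\ge(L+H)/2$ in this regime, this implies the stated bound; the case $L\le H$ is treated analogously with $\lambda=L$, exchanging the roles of horizontal and vertical slicing so that now the left and right edges of $Q$ lie on $\partial\Omega$.

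I expect the trace-type step to be the main technical hurdle: one only has $L^1$ information on $u-Fx-a$ in the interior of $Q$, yet one must recover the affine slope of the boundary trace on $\partial Q\cap\partial\Omega$. The combination of (\ref{Poincare}) and (\ref{Poincare2}) through the fundamental theorem of calculus in $x_2$ is what makes this transfer possible. This step is essential for the $K_1$ case, because vertical slicing alone only yields $|Fe_2-e_2|\gtrsim\alpha^2$ (the rank-one connection in the $e_2$-direction is degenerate); using both columns simultaneously restores the correct linear-in-$\alpha$ scaling.
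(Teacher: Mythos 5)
Your proposal is correct and follows essentially the same route as the paper: localization at scale $\lambda=\min\{L,H\}$, the estimates (\ref{Poincare})--(\ref{Poincare2}) to transfer interior $L^1$ information to the boundary where $u(x)=x$, and the observation that every $F\in K_j$ satisfies $|Fe_1-e_1|+|Fe_2-e_2|\ge c\alpha$, followed by the same two-term case split. The only differences are implementation-level: you replace the paper's citation of the $W^{1,1}$ trace theorem on $\partial Q$ by a fundamental-theorem-of-calculus slicing argument (and get $|Fe_2-e_2|$ directly by integrating $\partial_2 u$ across the square), and you prove the $K_1$ incompatibility by explicit minimization over the rotation angle, where the paper uses the shortcut $\alpha=|Fe_1\cdot Fe_2|\le 2|Fe_1-e_1|+|Fe_2-e_2|$.
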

Before starting, we recall that by the trace theorem in $W^{1,1}$, there is a universal constant $c_T>0$ such that for any
square $q_r=(0,r)^2$,
\begin{equation}\label{eqtrace}
  \int_{\partial q_r} |w| d\calH^1 \le c_T \int_{q_r}\left( \frac1r |w|+|Dw|\right) dx\,.
\end{equation}
The case $r=1$ is given, for example, in 
\cite[Th. 1, Sect. 4.3]{EvansGariepy}, the same estimate for generic $r$ follows by applying the $r=1$ bound to  $w_r(x)=w(rx)$.
\begin{proof}
  We use Lemma \ref{Lemma:Epart} with $\lambda=\min\{L,H\}$ and observe that
  $Q=S\cap S'$ is a square with at least 
  two sides on $\partial\Omega$. By the trace estimate (\ref{eqtrace}) applied
to $w(x)=u(x)-Fx-a$, (\ref{Poincare})--(\ref{Poincare2}) imply
  \begin{equation*}
    \int_{\partial Q} |u(x)-Fx-a|\, d\calH^1(x)\le 
c\frac{\lambda^3}{\varepsilon LH } E_j
+ c \frac{\lambda^2}{L^{1/2}H^{1/2}} E_j^{1/2}
\,.
  \end{equation*}
  On the other hand, we show below that there is a universal constant $c_*>0$
  such that 
  \begin{equation}\label{eqFIdrand}
    c_*\alpha\lambda^2 \le         \int_{\partial Q\cap\partial\Omega}
    |x-Fx-a|\, d\calH^1(x) \text{ for all $a\in \R^2$, $F\in K_j$}\,.
  \end{equation}
Since $u(x)=x$ on $\partial\Omega$, we obtain 
  \begin{equation*}
    c_*\alpha\lambda^2 \le c \frac{\lambda^3}{\varepsilon LH } E_j
+ c \frac{\lambda^2}{L^{1/2}H^{1/2}} E_j^{1/2}\,.
  \end{equation*}
At least one of the two addends must be larger than 
$    c_*\alpha\lambda^2/2$, therefore,
\begin{equation*}
  E_j\ge c \min\Bigl\{\frac{\alpha\varepsilon LH}{\lambda}, \alpha^2 LH\Bigr\}\,,
\end{equation*}
inserting $\lambda$ this  implies  the assertion.

It remains to prove (\ref{eqFIdrand}). 
Let one of the sides of $Q$ 
 contained in $\partial\Omega$ be of the form
 $q+(0,\lambda)e_i$, $i=1$ or $2$.
Since for any $v\in \R^2$, we have
\begin{equation*}
  \min_{a'\in \R^2} \int_0^\lambda |tv+a'| dt =\frac14  \lambda^2|v|\,,
\end{equation*}
we obtain (with $a'=q-Fq-a$, $v=e_i-Fe_i$)
\begin{equation*}
  \frac14\lambda^2 |Fe_i-e_i|\le\int_0^\lambda | (\Id-F)(q+te_i)-a| dt\,.
\end{equation*}
If there are two orthogonal sides of $\partial Q$ 
contained in $\partial\Omega$,
this gives control of the norm of $F-\Id$. If instead only two 
parallel sides of $\partial Q$  are contained in $\partial\Omega$, 
we need one more step. We assume for notational simplicity that they
are both horizontal, so that the previous equation gives control of $|Fe_1-e_1|$. 
We write
\begin{alignat*}1
 \lambda^2 |Fe_2-e_2|&=
  \int_0^\lambda |(\Id-F)\lambda e_2|dt\\
&\le
  \int_0^\lambda |(\Id-F)te_1-a|dt+\int_0^\lambda|(\Id-F)(te_1+\lambda e_2)-a| dt \,.
\end{alignat*}
The first integral corresponds to the bottom side, the last one to the top side.
In both cases, we conclude
\begin{equation*}
  \frac14\lambda^2 (|Fe_1-e_1|+|Fe_2-e_2|)   \le      2 
  \int_{\partial Q\cap\partial\Omega}     |x-Fx-a|\, d\calH^1(x)\,.
\end{equation*}
Finally, if $F\in K_2$, we have $|Fe_2|=1\pm \alpha$, which implies
$|Fe_2-e_2|\ge \alpha$ and concludes the proof of (\ref{eqFIdrand}) with $c_*=1/8$.
If $F\in K_1$, then
\begin{alignat*}1
  \alpha&=|Fe_1\cdot Fe_2| = | (Fe_1-e_1)\cdot Fe_2 + e_1\cdot (Fe_2-e_2)|
\\ &  \le 2 |Fe_1-e_1|+|Fe_2-e_2|\,,
\end{alignat*}
and the proof of (\ref{eqFIdrand})  is concluded with $c_*=1/16$.
\end{proof}

Before closing this general part, we present a lemma which permits to deal with
the boundary data.  
\begin{lemma}\label{lemmaaverage0}
  Let $\omega\subset\R^n$ be bounded and measurable, $v\in L^1(\omega;\R^2)$,
  $d\in L^2(\omega;[0,\infty))$ and $e\in 
  S^1$ be such  that $v\cdot e-1$ has average 0 and $|v|\le 1+d$ almost
  everywhere. Then 
  \begin{equation*}
    \|v\cdot e-1\|_{L^1}\le 2 |\omega|^{1/2}\|d\|_{L^2}
  \end{equation*}
  and
  \begin{equation*}
    \|v\cdot e^\perp\|_{L^1}\le
     3 |\omega|^{3/4}\|d\|_{L^2}^{1/2}+|\omega|^{1/2}\|d\|_{L^2}\,.
  \end{equation*}
All norms are taken over $\omega$.
\end{lemma}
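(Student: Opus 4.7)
The approach is elementary: decompose $v = (v\cdot e)e + (v\cdot e^\perp)e^\perp$, so that the pointwise constraint $|v|\le 1+d$ reads
\begin{equation*}
(v\cdot e)^2 + (v\cdot e^\perp)^2 \le (1+d)^2 = 1 + 2d + d^2\,.
\end{equation*}
Set $a := v\cdot e - 1$ and $b := v\cdot e^\perp$. The hypothesis says $\int_\omega a\,dx = 0$ and the inequality above becomes
\begin{equation*}
a^2 + 2a + b^2 \le 2d + d^2\,.
\end{equation*}

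First I would prove the bound on $\|v\cdot e -1\|_{L^1}$. From $|v\cdot e|\le|v|\le 1+d$ one has $a\le d$, so the positive part satisfies $a_+\le d$. Since $a$ has zero mean, $\|a_+\|_{L^1}=\|a_-\|_{L^1}$, hence
\begin{equation*}
\|a\|_{L^1} = 2\|a_+\|_{L^1} \le 2\|d\|_{L^1} \le 2|\omega|^{1/2}\|d\|_{L^2}
\end{equation*}
by Cauchy--Schwarz. This yields the first inequality.

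For the second bound, I would integrate the pointwise inequality $b^2 \le 2d + d^2 - a^2 - 2a$ over $\omega$. Dropping $-a^2\le 0$ and using $\int_\omega a\,dx =0$ gives
\begin{equation*}
\|b\|_{L^2}^2 \le 2\|d\|_{L^1} + \|d\|_{L^2}^2 \le 2|\omega|^{1/2}\|d\|_{L^2}+\|d\|_{L^2}^2\,.
\end{equation*}
Applying $\sqrt{x+y}\le \sqrt{x}+\sqrt{y}$ and then Cauchy--Schwarz (in that order) produces
\begin{equation*}
\|b\|_{L^1}\le |\omega|^{1/2}\|b\|_{L^2} \le \sqrt{2}\,|\omega|^{3/4}\|d\|_{L^2}^{1/2} + |\omega|^{1/2}\|d\|_{L^2}\,,
\end{equation*}
which is stronger than the stated bound (since $\sqrt{2}\le 3$).

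There is no genuine obstacle: the only non-routine observation is the trick of using the vanishing mean of $a$ to absorb the linear term $-2a$ when integrating the quadratic constraint, which is precisely what couples the control of $b$ (the tangential component) to the smallness of $d$. The rest is Cauchy--Schwarz and the elementary subadditivity of the square root.
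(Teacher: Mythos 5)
Your proof is correct, and your first inequality is obtained essentially as in the paper: the positive part of $v\cdot e-1$ is dominated pointwise by $d$, and the zero average doubles this into a bound on the full $L^1$ norm, followed by H\"older. For the second inequality your route genuinely differs from the paper's. The paper also starts from the Pythagorean identity $(v\cdot e^\perp)^2=|v|^2-(v\cdot e)^2\le (1+d)^2-(v\cdot e)^2$, but then bounds $\|v\cdot e\|_{L^2}$ from below by combining the already-proven $L^1$ estimate with H\"older, namely $|\omega|-2\|d\|_{L^1}\le \|v\cdot e\|_{L^1}\le |\omega|^{1/2}\|v\cdot e\|_{L^2}$, which forces a case distinction according to whether $2\|d\|_{L^1}\le|\omega|$ and yields $\|v\cdot e^\perp\|_{L^2}^2\le 6\|d\|_{L^1}+\|d\|_{L^2}^2$. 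You instead expand $(v\cdot e)^2=(1+a)^2$ and use the zero mean of $a$ to cancel the linear term after integrating the pointwise constraint, dropping the nonnegative $a^2$; this avoids the case split, shortens the argument, and improves the intermediate bound to $2\|d\|_{L^1}+\|d\|_{L^2}^2$, hence $\sqrt2$ in place of $3$ in the final estimate. Two minor remarks: in your last display you in fact apply H\"older first and the subadditivity of the square root second (the order stated in your text is reversed); and it is worth noting once that $|v|\le 1+d$ with $d\in L^2$ and $\omega$ bounded places $v$ in $L^2$, so all quadratic quantities appearing in the integration are finite --- the paper's own proof tacitly uses the same fact.
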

\begin{proof}
  We assume $e=e_1$ without loss of generality. Since $v_1-1$ has average 0,
  we have (writing $f_\pm=\max\{\pm f,0\}$)
  \begin{alignat}1\label{eqv1m1}
    \|v_1-1\|_{L^1}&=\int_\omega \left[(v_1-1)_++(v_1-1)_-\right] dx\\
& = 2\int_\omega(v_1-1)_+\, dx\le 2 \int_\omega (|v|-1)_+ dx
    \le 2 \|d\|_{L^1}\,.  \nonumber
  \end{alignat}
Using Hölder's inequality, this leads directly to the first 
bound given in the statement. To address the second one, we write
$v_2^2=|v|^2-v_1^2$ 
which implies, recalling
 $|v|\le 1+d$,
\begin{equation}\label{eqv2dv}
  \|v_2\|_{L^2}^2 = \|v\|_{L^2}^2-\|v_1\|_{L^2}^2
\le \|(1+d)^2\|_{L^1} -\|v_1\|_{L^2}^2\,.
\end{equation}
In order to estimate the last term, we write $1=v_1+(1-v_1)$ and use
 a triangular inequality and (\ref{eqv1m1}),
\begin{equation*}
  |\omega|\le \|v_1\|_{L^1}+\|1-v_1\|_{L^1} \le \|v_1\|_{L^1} + 2 \|d\|_{L^1}\,.
\end{equation*}
Hölder's inequality then gives
\begin{equation*}
  |\omega|-2\|d\|_{L^1} \le \|v_1\|_{L^1}\le |\omega|^{1/2}\|v_1\|_{L^2}\,.
\end{equation*}
If $2\|d\|_{L^1}\le |\omega|$, squaring and inserting in (\ref{eqv2dv}), we obtain
\begin{alignat*}1
  \|v_2\|_{L^2}^2 \le& \|(1+d)^2\|_{L^1} -(|\omega|^{1/2}-2
  |\omega|^{-1/2}\|d\|_{L^1})^2 \\
\le& |\omega|+2\|d\|_{L^1}+\|d\|_{L^2}^2 -|\omega|+4\|d\|_{L^1}-
4|\omega|^{-1}\|d\|_{L^1}^2 \\
\le & 6\|d\|_{L^1}+\|d\|_{L^2}^2 \,.
\end{alignat*}
In the case  $2\|d\|_{L^1}\ge |\omega|$, we instead write, again from  (\ref{eqv2dv}),
\begin{equation*}
    \|v_2\|_{L^2}^2 \le \|(1+d)^2\|_{L^1} \le
    |\omega|+2\|d\|_{L^1}+\|d\|_{L^2}^2
    \le 4\|d\|_{L^1}+\|d\|_{L^2}^2\,.
\end{equation*}
In both cases, Hölder's inequality gives
\begin{equation*}
    \|v_2\|_{L^2}^2\le  6\|d\|_{L^1}+\|d\|_{L^2}^2 
    \le 6|\omega|^{1/2}\|d\|_{L^2}+\|d\|_{L^2}^2    \,.  
\end{equation*}
Another application of Hölder's inequality and of the estimate
$(x+y)^{1/2}\le x^{1/2}+y^{1/2}$ gives, rounding for brevity $\sqrt 6\le 3$, 
\begin{alignat*}1
    \|v_2\|_{L^1}&\le |\omega|^{1/2} \|v_2\|_{L^2}\\
&\le
    |\omega|^{1/2}( 6|\omega|^{1/2}\|d\|_{L^2}+\|d\|_{L^2}^2 )^{1/2}
    \le 3 |\omega|^{3/4}\|d\|_{L^2}^{1/2}+|\omega|^{1/2}\|d\|_{L^2}\,.
\end{alignat*}
\end{proof}

\subsection{Proof of the lower bound in Theorem \ref{Theorem:K1}}
\label{seccaseK1}
At this point, we specialize to the first case.
We write for brevity $E=E_1=E_1^\eps[u,\Omega]$ and $K=K_1$.

\begin{lemma}\label{Lemma:dux}
Assume $H,L>0$, $\alpha\in (0,1)$, $\eps>0$, $u\in \calM$. 
Let $S,S'$ be as in Lemma \ref{Lemma:Poincare}. Then
\begin{equation}\label{u1x1bA}
\int_{S\cap S'}|u_1(x)-x_1|\,dx \leq c 
\lambda^2 \frac{L^{1/2}}{H^{1/2}} E^{1/2}
\end{equation}
and
\begin{equation}\label{u2x2bA}
\int_{S\cap S'}|u_2(x)-x_2|\,dx \leq c 
\lambda^2  \frac{H^{1/2}}{L^{1/2}} (E+\alpha^4LH)^{1/2}\,.
\end{equation}
Further,
\begin{equation}\label{eqpartial1u2}
  \|\partial_1 u_2\|_{L^1(\Omega)} \le
  c (HL)^{3/4} E^{1/4}+ c
(HL)^{1/2} E^{1/2}\,.
\end{equation}
The constant $c$ is universal.
\end{lemma}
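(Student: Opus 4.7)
The plan is to apply Lemma \ref{lemmaaverage0} to the directional derivatives of $u$, exploiting the simple facts that for every $F\in K_1$ one has $|Fe_1|=1$ (since $A_1e_1=B_1e_1=e_1$ and rotations preserve length) while $|Fe_2|=\sqrt{1+\alpha^2}$. In what follows I write $d(x)=\dist(Du(x),K)$.

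For (\ref{u1x1bA}), I would apply Lemma \ref{lemmaaverage0} on $\omega=S$ with $v=\partial_1 u$ and $e=e_1$: $|Fe_1|=1$ gives $|v|\le 1+d$ a.e., while the boundary condition $u(x)=x$ on $\partial\Omega$ forces $u_1(L,x_2)-u_1(0,x_2)=L$ for every $x_2\in(0,H)$, so $\partial_1 u_1-1$ has zero mean on $S$. Combining with (\ref{epart1}), which gives $\|d\|_{L^2(S)}^2\le c\lambda E/H$, the first bound of the lemma yields
\begin{equation*}
\|\partial_1 u_1-1\|_{L^1(S)}\le 2|S|^{1/2}\|d\|_{L^2(S)}\le c\lambda\, L^{1/2}H^{-1/2}E^{1/2}.
\end{equation*}
Writing $u_1(x_1,x_2)-x_1=\int_0^{x_1}(\partial_1 u_1(t,x_2)-1)\,dt$ (using $u_1(0,x_2)=0$) and integrating over $S\cap S'$ then gains an extra factor $\lambda$ from the $x_1$-integration, producing (\ref{u1x1bA}). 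The bound (\ref{eqpartial1u2}) follows from exactly the same lemma with $\omega=\Omega$ instead, using $\|d\|_{L^2(\Omega)}^2\le E$ and $|\Omega|=LH$ in the second inequality of Lemma \ref{lemmaaverage0}.

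For (\ref{u2x2bA}), Lemma \ref{lemmaaverage0} is not directly applicable because $|\partial_2 u|\le\sqrt{1+\alpha^2}+d$ rather than $1+d$, while the Dirichlet data only imply $\int_{S'}(\partial_2 u_2-1)\,dx=0$ (not centered at $\sqrt{1+\alpha^2}$). I would instead estimate by hand: since $\sqrt{1+\alpha^2}-1\le \alpha^2/2$, one has $(\partial_2 u_2-1)_+\le|\partial_2 u|-1\le \alpha^2/2+d$, and the vanishing mean yields
\begin{equation*}
\|\partial_2 u_2-1\|_{L^1(S')}=2\|(\partial_2 u_2-1)_+\|_{L^1(S')}\le\alpha^2|S'|+2\|d\|_{L^1(S')}.
\end{equation*}
Combining (\ref{epart2}) with the H\"older bound $\|d\|_{L^1(S')}\le|S'|^{1/2}\|d\|_{L^2(S')}$ gives $\|\partial_2 u_2-1\|_{L^1(S')}\le\alpha^2 H\lambda+c\lambda(HE/L)^{1/2}$. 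Integrating in the $x_2$-direction over $S\cap S'$ (using $u_2(x_1,0)=0$) introduces another factor $\lambda$, and merging the two resulting summands via $a+b\le\sqrt{2(a^2+b^2)}$ produces the factor $(E+\alpha^4 LH)^{1/2}$ appearing in (\ref{u2x2bA}).

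The main obstacle is precisely the $e_2$-direction: the mismatch $|Fe_2|-1=O(\alpha^2)$ prevents a direct appeal to Lemma \ref{lemmaaverage0} and is responsible for the additive $\alpha^4 LH$ correction, which in turn is what allows Theorem \ref{Theorem:K1} to detect the regime VB of vertical branching.
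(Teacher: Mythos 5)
Your proposal is correct and follows essentially the same route as the paper: Lemma \ref{lemmaaverage0} applied with $v=\partial_1 u$, $e=e_1$ on $S$ (and on all of $\Omega$ for (\ref{eqpartial1u2})), with the zero means supplied by the Dirichlet data, followed by integration from the boundary to gain the extra factor $\lambda$. The only cosmetic difference is in (\ref{u2x2bA}): the paper applies Lemma \ref{lemmaaverage0} directly with the shifted defect $d=\dist(Du,K_1)+\alpha^2$, whereas you unfold that one-line ``zero mean plus positive-part'' argument by hand, which yields the same $\alpha^4 LH$ correction.
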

\begin{proof}
This follows from repeated application of Lemma \ref{lemmaaverage0}. We 
first use $v=\partial_1 u$,  $e=e_1$, $d=\dist(Du,K)$, $\omega=S$. By the boundary
conditions, $\int_S (\partial_1u_1-1)dx=0$, and $|v|\le 1+d$ because $|Fe_1|=1$ for
all $F\in K$ and $|(Du)e_1|\le \min_{F\in K}|Du-F|+|Fe_1|= 1+\dist(Du,K)$. We
obtain, using (\ref{epart1}),
\begin{equation*}
  \|\partial_1 u_1-1\|_{L^1(S)} \le
 c (\lambda L)^{1/2} 
\left(\frac{\lambda E}{H}\right)^{1/2}
  = c \lambda \frac{L^{1/2}E^{1/2}}{H^{1/2}}\,.
\end{equation*}
Integrating, 
\begin{equation*}
  \|u_1-x_1\|_{L^1(S\cap S')}\le \lambda \|\partial_1u_1-1\|_{L^1(S)}
\end{equation*}
proves (\ref{u1x1bA}).
Using $\omega=\Omega$ with the same $v$, $e$, and $d$, we get
\begin{equation*}
  \|\partial_1 u_2\|_{L^1(\Omega)} \le
  3 (H L)^{3/4}  E^{1/4}+ (H
  L)^{1/2} 
E^{1/2}\,,
\end{equation*}
which  proves (\ref{eqpartial1u2}).
We now turn to the other component, and set $v=\partial_2 u$, $e=e_2$,
$d=\dist(Du,K)+\alpha^2$, $\omega=S'$. 
Since
 for all $F\in K$, we have $|Fe_2|=\sqrt{1+\alpha^2}\leq
 1+\alpha^2/2$, we obtain 
$|v|=|(Du)e_2|\leq 1+ \frac12\alpha^2+ \dist(Du,K)$. 
In this case, recalling (\ref{epart2}),
\begin{equation*}
  \|d\|_{L^2(S')}^2 \le c \frac{\lambda E}L +2\lambda H\alpha^4
  \le c \frac{\lambda (E+\alpha^4LH)}{L}\,.
\end{equation*}
Proceeding as above,  Lemma \ref{lemmaaverage0} gives
\begin{equation*}
  \|\partial_2 u_2-1\|_{L^1(S')} \le
 c (\lambda H)^{1/2} 
\left(\frac{\lambda (E+\alpha^4LH)}{L}\right)^{1/2}\,,
\end{equation*}
 and therefore  (\ref{u2x2bA}).
\end{proof}
%%%
\begin{lemma}\label{lemmaLBK1}
Under the assumptions of Theorem \ref{Theorem:K1} one
has,  for any $u\in \calM$,
\begin{alignat*}1
E_1^\eps[u,\Omega]\ge c \min\Bigl\{ & \alpha^{4/3}\varepsilon^{2/3} L^{1/3}H +\alpha \eps L, \\
&\alpha^{4/3}\varepsilon^{2/3} LH^{1/3} +\alpha \eps H 
+ \alpha^4LH ,\alpha^2 LH\Bigr\}\,.
\end{alignat*}
The positive constant $c$ is  universal.
\end{lemma}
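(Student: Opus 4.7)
The plan is to apply the localization framework of Lemmas~\ref{Lemma:Epart}--\ref{Lemma:dux} at a scale $\lambda\in(0,\min\{L,H\}]$ to be optimized, and to split into the two microstructure orientations allowed by the pair of rank-one connections in $K_1$. After noting that it suffices to treat $E:=E_1^\eps[u,\Omega]<c_0\alpha^2 LH$ (otherwise the austenite term in the minimum is attained) and applying Lemma~\ref{lemmathin} to reduce to $\eps\le c\alpha\min\{L,H\}$, I would fix $\lambda$, invoke Lemma~\ref{Lemma:Poincare} to obtain $S,S'$, a square $Q=S\cap S'$ of side $\lambda$, a matrix $F\in K_1$ and a vector $a\in\R^2$. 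Combining (\ref{Poincare2}) with (\ref{u1x1bA}) and (\ref{u2x2bA}), and using the elementary fact $\min_d\int_Q|bx_1+cx_2+d|\,dx\ge c\lambda^3(|b|+|c|)$ on a square of side $\lambda$, yields the row-wise inequalities
\begin{align*}
\lambda^3(|F_{11}-1|+|F_{12}|)&\le c\lambda^2\tfrac{L^{1/2}}{H^{1/2}}E^{1/2}+c\tfrac{\lambda^4}{\eps LH}E+c\tfrac{\lambda^3}{(LH)^{1/2}}E^{1/2},\\
\lambda^3(|F_{21}|+|F_{22}-1|)&\le c\lambda^2\tfrac{H^{1/2}}{L^{1/2}}(E+\alpha^4 LH)^{1/2}+c\tfrac{\lambda^4}{\eps LH}E+c\tfrac{\lambda^3}{(LH)^{1/2}}E^{1/2}.
\end{align*}

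A short parametrization shows every $F\in K_1$ satisfies $\max\{|F_{12}|,|F_{21}|\}\ge c\alpha$: writing $F=R_\varphi A_1$ or $R_\varphi B_1$, one has $F_{21}=\sin\varphi$ and $F_{12}=\mp\alpha\cos\varphi-\sin\varphi$, so either $|\sin\varphi|\ge\alpha/2$ (yielding $|F_{21}|\ge\alpha/2$) or $|\cos\varphi|\ge\sqrt{3}/2$ and $|F_{12}|\ge\alpha|\cos\varphi|-|\sin\varphi|\ge c\alpha$ (the regime $|\cos\varphi|<\sqrt{3}/2$ is handled separately, as then $|F_{11}-1|\ge 1/2$ provides an immediate, much stronger bound). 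I would split into Case~A ($|F_{12}|\ge c\alpha$, horizontal layering) and Case~B ($|F_{21}|\ge c\alpha$, vertical layering). In Case~A, dividing the first inequality by $\lambda^3$ gives $\alpha\lesssim (L/H)^{1/2}E^{1/2}/\lambda+\lambda E/(\eps LH)+E^{1/2}/(LH)^{1/2}$ for every admissible $\lambda$; the third term dominating would force $E\ge c\alpha^2 LH$ against our assumption, so balancing the first two terms at $\lambda_*=(\eps L^{3/2}H^{1/2}/E^{1/2})^{1/2}$ (admissible in our regime) yields $E\ge c\alpha^{4/3}\eps^{2/3}L^{1/3}H$, while $\lambda=\min\{L,H\}$ yields $E\ge c\alpha\eps\max\{L,H\}\ge c\alpha\eps L$. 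Since both lower bounds hold simultaneously, $E\ge c(\alpha^{4/3}\eps^{2/3}L^{1/3}H+\alpha\eps L)$.

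Case~B is mirror-symmetric under $L\leftrightarrow H$ and $E\mapsto\tilde E=E+\alpha^4 LH$, yielding by the same analysis $\tilde E\ge c(\alpha^{4/3}\eps^{2/3}LH^{1/3}+\alpha\eps H)$. To extract the extra additive $\alpha^4 LH$ term, I would combine (\ref{epart4}) with (\ref{eqpartial1u2}) and the Case~B assumption $|F_{21}|\ge c\alpha$: since $\partial_1 u_2=(Du)_{21}$, the triangle inequality gives $c\alpha\lambda^2\le\|\partial_1 u_2\|_{L^1(Q)}+\|Du-F\|_{L^1(Q)}\le c\lambda^2\bigl[E^{1/4}/(LH)^{1/4}+E^{1/2}/(LH)^{1/2}\bigr]+c\lambda^3 E/(\eps LH)+c\lambda^2 E^{1/2}/(LH)^{1/2}$; dividing by $\lambda^2$ and choosing $\lambda$ small enough that the $\lambda E/(\eps LH)$ remainder is absorbed produces the independent estimate $E\ge c\alpha^4 LH$. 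Combining the $\tilde E$ bound with this $\alpha^4 LH$ bound via an elementary case analysis on whether $c(\alpha^{4/3}\eps^{2/3}LH^{1/3}+\alpha\eps H)$ or $\alpha^4 LH$ dominates then yields $E\ge c\min\{\alpha^{4/3}\eps^{2/3}LH^{1/3}+\alpha\eps H+\alpha^4 LH,\alpha^2 LH\}$ in Case~B. Taking the minimum over the two cases and the austenite threshold concludes the proof.

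The principal obstacle will be the bookkeeping in Case~B: tracking constants across the sub-regimes $\eps\gtrless\alpha^2 L$ and $H\gtrless L$ to combine the $\tilde E$ bound with the independent $\alpha^4 LH$ bound into the stated additive sum with a uniform constant, and ensuring that the $\lambda$ used in the $\partial_1 u_2$ argument lies in the admissible range $(0,\min\{L,H\}]$ while still placing us in Case~B at that scale.
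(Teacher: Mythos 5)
Your overall strategy coincides with the paper's: the same localization (Lemmas \ref{Lemma:Epart}, \ref{Lemma:Poincare}, \ref{Lemma:dux}), the same comparison of $F$ with the boundary data on the square $Q$, the same dichotomy ($|F_{12}|\ge c\alpha$ versus $|F_{21}|\ge c\alpha$), and optimization in $\lambda$ combined with Lemma \ref{lemmathin}. However, two steps would fail as written. In Case A, the claim that the balancing scale $\lambda_*=\eps^{1/2}L^{3/4}H^{1/4}E^{-1/4}$ is ``admissible in our regime'' is false in general: for $H\ll L$ and $E$ near its a priori floor $c\alpha\eps L$, one has $\lambda_*\sim (LH)^{1/2}\gg H$ even under $\eps\le\alpha H$ and $E\le c_0\alpha^2LH$. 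When the constraint $\lambda\le\min\{L,H\}$ is active you only get $E\ge c\alpha^2H^3/L$ (or the austenite bound), and the branching term must be recovered by the AM--GM step $\alpha^{4/3}\eps^{2/3}L^{1/3}H\le\frac13\alpha^2H^3/L+\frac23\alpha\eps L$ together with $E\ge c\alpha\eps L$ from Lemma \ref{lemmathin}; this is exactly what the paper does and is missing from your sketch. Relatedly, plugging $\lambda=\min\{L,H\}$ into your row inequality yields only $E\ge c\min\{\alpha^2H^3/L,\alpha\eps L,\alpha^2LH\}$ (for $H\le L$), not $E\ge c\alpha\eps\max\{L,H\}$; the additive $\alpha\eps L$ has to come from Lemma \ref{lemmathin} in the reduced regime, which you have available but do not invoke for this purpose.

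The more serious gap is the one you flag yourself in Case B. Your extraction of $\alpha^4LH$ requires shrinking $\lambda$ (to roughly $\eps/\alpha$) so that the Poincar\'e remainder $c\lambda E/(\eps LH)$ can be absorbed, but the hypothesis $|F_{21}|\ge c\alpha$ refers to the matrix $F$ produced by Lemma \ref{Lemma:Poincare} at the original, much larger scale; at the smaller scale the stripes, $Q$ and $F$ all change, and if the new $F$ satisfies $|F_{12}|\ge c\alpha$ instead, the Case-A row inequality at $\lambda\sim\eps/\alpha$ only gives $E\gtrsim \eps^2H/L$, which in the reduced regime $\eps\le\alpha L$ is weaker than every entry of the asserted minimum. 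So the argument does not close as proposed. The paper's fix stays at a single scale: inside Case B it splits on whether $\|\partial_1u_2\|_{L^1(S\cap S')}\le\alpha\lambda^2/4$. If yes, $|F_{21}|\ge\alpha/2$ together with (\ref{Poincare}) reproduces the Case-A inequality (\ref{eqcase1ineq12}) at the same $\lambda$, so the Case-A conclusion (an admissible entry of the minimum) applies; if no, (\ref{epart4}) and (\ref{eqpartial1u2}) give $E\ge c\alpha^4LH$ directly, with no $\|Du-F\|$ term to absorb, and this in addition yields $E+\alpha^4LH\le cE$, which is what allows the swapped row estimate to be phrased in $E$ rather than in $\tilde E$. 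You should adopt this single-scale sub-dichotomy (or an equivalent device) to repair Case B; with that and the AM--GM completion of Case A, your outline matches the paper's proof.
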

%%%
\begin{proof}
We fix $\lambda\in(0,\min\{L,H\}]$, the precise value will be chosen below,
and  choose the stripes $S$ and $S'$ as in  Lemma \ref{Lemma:Epart}.

We first show that there is $c_*>0$ such that for every $F\in K$ and 
$a\in \R^2$, one has 
\begin{equation}\label{K1cases}
\begin{split}
&\text{if $|F_{21}|<\alpha/2$ (case 1):} \quad c_*\alpha\lambda^3 \leq \norm{(x-Fx-a)\cdot e_1}_{L^1(S\cap S')}, \\
&\text{if $|F_{21}|\geq\alpha/2$ (case 2):} \quad c_*\alpha\lambda^3  \leq \norm{(x-Fx-a)\cdot e_2}_{L^1(S\cap S')}.
\end{split}
\end{equation}
Both inequalities follow from the fact that for all $\xi\in\R$ one has
(as in the proof of (\ref{eqFIdrand}) above)
\begin{equation*}%\label{eql1slope}
  \min_{\eta\in\R} \int_0^\lambda |\xi t + \eta| dt
  = \int_0^\lambda |\xi t - \frac12 \xi \lambda| dt = \frac14 \lambda^2 |\xi|\,.
\end{equation*}
%In both cases this inequality is used in the  $x_1$ direction, the integral in %$x_2$ merely adds a factor $\lambda$.
To prove the first inequality in (\ref{K1cases}), we  estimate
\begin{alignat*}1
 \norm{(x-Fx-a)\cdot e_1}_{L^1(S\cap S')}
& = \int_s^{s+\lambda} \int_{s'}^{s'+\lambda} |(x_1-F_{11}x_1-a_1)-F_{12}x_2| dx_2 dx_1\\
&\ge \int_s^{s+\lambda} \min_{\eta\in\R} \int_{s'}^{s'+\lambda} |\eta-F_{12}x_2| dx_2 dx_1
\ge \frac14 \lambda^3 |F_{12}|\,.
\end{alignat*}
Since $F\in K$ implies
$F_{21}=\sin\varphi$, $F_{12}=\pm\alpha\cos\varphi-\sin\varphi$ for some
$\varphi\in\R$,  in the first case, we have $|F_{12}|\ge \alpha
(\sqrt{3/4}-1/2)\ge c\alpha$, and the proof is concluded.
The second case is simpler,
\begin{alignat*}1
 \norm{(x-Fx-a)\cdot e_2}_{L^1(S\cap S')}
& =  \int_{s'}^{s'+\lambda} \int_s^{s+\lambda} |(x_2-F_{22}x_2-a_2)-F_{21}x_1| dx_1 dx_2\\
&\ge \int_{s'}^{s'+\lambda}  \min_{\eta\in\R} \int_{s}^{s+\lambda} |\eta-F_{21}x_1| dx_1 dx_2
\ge \frac14 \lambda^3 |F_{21}|\,.
\end{alignat*}
This concludes the proof of (\ref{K1cases}).

For $i\in\{1,2\}$, we write
\begin{equation*}
|(x-Fx-a)\cdot e_i| \leq |(u(x)-x)\cdot e_i| + |u(x)-Fx-a|\,.
\end{equation*}
In case $i$, we obtain 
\begin{equation}\label{K1cases2}
c_*\alpha \lambda^3  \leq \|u_i(x)-x_i\|_{L^1(S\cap S')} + \|u(x)-Fx-a\|_{L^1(S\cap S')}\,.
\end{equation}
At this point, we  distinguish the two cases.

{\bf Case 1 ($|F_{21}|<\alpha/2$).} 
We estimate the first term in (\ref{K1cases2}) by 
(\ref{u1x1bA}) and the second by (\ref{Poincare2}),
\begin{equation}\label{eqcase1ineq12}
c_*\alpha\lambda^3\leq 
c\lambda^2\frac{L^{1/2}}{H^{1/2}} E^{1/2}
+c\frac{\lambda^4}{\varepsilon LH } E
+ c \frac{\lambda^3}{L^{1/2}H^{1/2}} E^{1/2}
 \,.
\end{equation}
Since $\lambda\le L$, the last term can be absorbed into the first
one, and 
\begin{alignat}1\label{eqcase1ineq1}
c_*\alpha\lambda^3\leq &
c\lambda^2 \frac{L^{1/2}}{H^{1/2}} E^{1/2}
+c\frac{\lambda^4}{\varepsilon LH } E\,,
\end{alignat}
which can be rewritten as
 \begin{equation*}
E\ge c \min\Bigl\{ \frac{\alpha^2 H\lambda^2} {L} , \frac{\alpha\varepsilon LH}{\lambda}   \Bigr\}\,.
\end{equation*}
The right-hand side is maximized by choosing
 $\lambda=(\varepsilon/\alpha)^{1/3}L^{2/3}$.  Since we can 
only choose  $\lambda\in (0,\min\{L,H\}]$, we set
\begin{equation*}
  \lambda=\min\left\{(\varepsilon/\alpha)^{1/3}L^{2/3}, L, H\right\} 
\end{equation*}
and obtain
 \begin{equation*}
E\ge c \min\Bigl\{ \alpha^{4/3}\varepsilon^{2/3} L^{1/3}H   ,
\alpha^2 LH,
\frac{\alpha^2 H^3}{L}\Bigr\}\,.
\end{equation*}
We recall that by Lemma \ref{lemmathin}, we have
$E\ge c \min\{\alpha \eps(L+H), \alpha^2 LH\}$.  
Either $E\ge\alpha^2 LH$ or $E$ is larger than the first expression. Therefore,
we can combine the two estimates to produce
 \begin{equation*}
E\ge c \min\Bigl\{ \alpha^{4/3}\varepsilon^{2/3} L^{1/3}H +\alpha \eps(L+H)  ,
\frac{\alpha^2 H^3}{L}+\alpha \eps(L+H), \alpha^2 LH\Bigr\}\,.
\end{equation*}
We show that the second entry in the minimum can be dropped.  Indeed,
by the arithmetic-geometric mean inequality,
\begin{equation*}
  \alpha^{4/3}\varepsilon^{2/3} L^{1/3}H =
\left(\frac{\alpha^2 H^3}{L}\right)^{1/3}\left(\alpha \eps L\right)^{2/3}
\le \frac13 \, \frac{\alpha^2 H^3}{L} + \frac23\, \alpha \eps L\,.
\end{equation*}
Therefore, the second term  is larger than the first one and is never the minimum.
Analogously, if $\alpha\eps H\ge \alpha^{4/3}\varepsilon^{2/3} L^{1/3}H$, then
$\eps\ge \alpha L$, but in this case, the inimum equals $\alpha^2
LH$. Therefore, we can drop the addend $\alpha\eps H$ in the first term.
We conclude that in  case 1,
 \begin{equation}\label{eqlastboundcase1}
E\ge c \min\Bigl\{ \alpha^{4/3}\varepsilon^{2/3} L^{1/3}H +\alpha \eps L, \alpha^2 LH\Bigr\}\,.
\end{equation}

{\bf Case 2 ($|F_{21}|\ge\alpha/2$).} If 
$\|\partial_1 u_2\|_{L^1(Q)} \le\lambda^2\alpha/4$, then
  (\ref{Poincare}) gives
  \begin{equation*}
    \frac14\lambda^2\alpha\le 
c\frac{\lambda^3}{\varepsilon LH } E
+ c \frac{\lambda^2}{L^{1/2}H^{1/2}} E^{1/2}\,,
  \end{equation*}
which implies (\ref{eqcase1ineq12}),  hence this case has already been treated.
Otherwise, from (\ref{epart4}) and (\ref{eqpartial1u2}) we immediately obtain
\begin{equation*}
  \frac14\alpha\lambda^2 \le c\frac{\lambda^2}{LH} 
  \|\partial_1 u_2\|_{L^1(\Omega)} \le
  c  \lambda^2   \left(\frac{ E}{LH}\right)^{1/4}+ c
\lambda^2
\left(\frac{E}{LH}\right)^{1/2}\,.
\end{equation*}
If $E\ge LH$, there is nothing to prove, hence we can ignore the second
term. Otherwise,
\begin{equation}\label{eqcase2alpha4}
  E\ge c\alpha^4 LH\,,
\end{equation}
which in particular permits to estimate
\begin{equation}\label{eqcase2alpha4b}
  E+\alpha^4LH \le c E\,.
\end{equation}
Using  (\ref{u2x2bA}) and (\ref{Poincare2}) in  (\ref{K1cases2}) yields
\begin{alignat*}1
c_*\alpha\lambda^3\leq &
c\lambda^2
  \frac{H^{1/2}}{L^{1/2}} (E+\alpha^4 LH)^{1/2}
+c\frac{\lambda^4}{\varepsilon LH } E
+ c \frac{\lambda^3}{L^{1/2}H^{1/2}} E^{1/2}\,.
\end{alignat*}
The last term can be absorbed into the first one,  using
(\ref{eqcase2alpha4b})  we obtain
\begin{alignat*}1
c_*\alpha\lambda^3\leq &
c\lambda^2  \frac{H^{1/2}}{L^{1/2}} E^{1/2}
+c\frac{\lambda^4}{\varepsilon LH } E
\end{alignat*}
for all $\lambda\in(0,\min\{L,H\}]$.
This is the same as (\ref{eqcase1ineq1}) up to swapping $L$ and $H$, and gives
 \begin{equation*}
E\ge c \min\Bigl\{ \alpha^{4/3}\varepsilon^{2/3} H^{1/3}L  ,
\frac{\alpha^2 L^3}{H}, \alpha^2 LH\Bigr\}\,.
\end{equation*}
Recalling Lemma \ref{lemmathin},
 \begin{equation*}
E\ge c \min\Bigl\{ \alpha^{4/3}\varepsilon^{2/3} H^{1/3}L +\alpha \eps(L+H)  ,
\frac{\alpha^2 L^3}{H}+\alpha \eps(L+H), \alpha^2 LH\Bigr\}\,,
\end{equation*}
and we drop the same irrelevant  terms as in the previous case.
Recalling (\ref{eqcase2alpha4}),
 we conclude
 \begin{equation*}
E\ge c \min\Bigl\{ \alpha^{4/3}\varepsilon^{2/3} H^{1/3}L +\alpha \eps H 
+ \alpha^4LH , \alpha^2 LH\Bigr\}\,,
\end{equation*}
which together with  (\ref{eqlastboundcase1}) concludes the proof.
\end{proof}
%%%%%%%%%%%%%%%%%%%%%%%%%%%%%%%%%%%%%%%%%%%%%%%%%%
%%%%%%%%%%%%%%%%%%%%%%%%%%%%%%%%%%%%%%%%%%%%%%%%%%

\subsection{Proof of the lower bound in Theorem \ref{Theorem:K2}}
\label{seccaseK2}
We now come to the proof of the lower bound in Theorem \ref{Theorem:K2}. 
In this case the local structure leads to oscillations in $\partial_2u_2$, and
the branching can only be horizontal. 

There are two  ways of producing a lower bound on the energy. The first one is to estimate
$\partial_1u_2$ using Lemma \ref{lemmaaverage0}, 
as was done in \cite{ChanContiSFBBericht}. This gives the optimal $\eps^{4/5}$
scaling, but not the optimal scaling in the parameter  $\alpha$. Indeed, this
argument is based on a purely nonlinear effect, and would not produce any bound
in the linear setting. 

We show here that the bound from \cite{ChanContiSFBBericht} can be
improved. The key idea is to use {\em two} partial integrations, to pass from
the $\partial_2u_2$ term to the $\partial_1 u_1$ one, and to use
for the leading-order term the estimate for $\partial_1 u_1$ in  Lemma
\ref{lemmaaverage0}, 
which scales as $E$ instead of $E^{1/2}$.
Swapping the indices between
the partial derivative and the component of $u$ is done using the invariance
under rotations. In order to be able to do the repeated partial integration, it
is helpful to test the derivatives with a smooth test function, instead of
simply integrating over a suitable domain, as done in Section \ref{seccaseK1}.

The key result in this section, which proves the lower bound of Theorem
\ref{Theorem:K2},  is the following. 
\begin{lemma}\label{lemmaLBK2}
Under the assumptions of Theorem \ref{Theorem:K2}, one
has
  \begin{equation*}
    \inf \{ E_2^\eps[u,\Omega]:u\in\calM\} \ge c\min\Bigl\{\alpha^{6/5}\varepsilon^{4/5}L^{1/5}H +
    \alpha \eps
    L, \alpha^2 LH\Bigr\}\,.
  \end{equation*}
The positive constant $c$ is universal.
\end{lemma}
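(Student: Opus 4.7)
The plan is to adapt the scheme of Lemma~\ref{lemmaLBK1}, replacing the direct $L^1$ control of components of $u$ by a test-function pairing that exploits both the rotational invariance and the degeneracy of the rank-one connection of $K_2$. Write $E = E_2^\eps[u,\Omega]$. By comparing with $u = x$ and using Lemma~\ref{lemmathin}, we may assume $E \le c_0 \alpha^2 LH$ and $E \ge c\alpha\eps L$, so that it remains to prove $E \ge c\alpha^{6/5}\eps^{4/5}L^{1/5}H$. Apply Lemma~\ref{Lemma:Epart} at a scale $\lambda \in (0,\min\{L,H\}]$ to be chosen, obtaining good stripes $S$, $S'$ and setting $Q = S\cap S'$.

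As in Lemma~\ref{lemmaLBK1}, a one-variable averaging (using $|Fe_2|\in\{1-\alpha,1+\alpha\}$ for every $F\in K_2$) yields a geometric lower bound of the form
\[
c\alpha\lambda^3 \le \|u_2-x_2\|_{L^1(Q)} + \|u-Fx-a\|_{L^1(Q)}
\]
for the matrix $F\in K_2$ and vector $a\in\R^2$ produced by Lemma~\ref{Lemma:Poincare}; a case analysis handles rotations of $F$ with non-negligible $|F_{21}|$, exactly as in the treatment of Lemma~\ref{lemmaLBK1}. The Poincar\'e term is controlled by (\ref{Poincare2}). The crux is a sharp estimate of $\|u_2-x_2\|_{L^1(Q)}$.

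The key new ingredient is the \emph{strong} (first) inequality of Lemma~\ref{lemmaaverage0} applied to $v=\partial_1 u$, $e=e_1$, $d=\dist(Du,K_2)$: since $|Fe_1|=1$ for every $F\in K_2$, we obtain $\|\partial_1 u_1-1\|_{L^1(S)} \le c\lambda L^{1/2}H^{-1/2}E^{1/2}$. Using instead the naive route through $u_2-x_2=\int_0^{x_1}\partial_1 u_2\,dt$ and the \emph{weak} (second) inequality of Lemma~\ref{lemmaaverage0} for $\partial_1 u_2$ would give only $E \ge c\alpha^{8/5}\eps^{4/5}L^{1/5}H$, off from the target by a factor of $\alpha^{2/5}$. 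To transfer the strong bound from $\partial_1 u_1-1$ to $\|u_2-x_2\|_{L^1(Q)}$ we invoke the rotational identity
\[
(\partial_1 u_2)^2 = -2(\partial_1 u_1-1)-(\partial_1 u_1-1)^2+O(\dist(Du,K_2))\,,
\]
a direct consequence of $|\partial_1 u|^2=1+O(d)$, together with the structural expansion
\[
\partial_2 u_2 - 1 = \pm\alpha - \tfrac12(\partial_1 u_2)^2 + \text{(smaller)}\,,
\]
which holds because locally $\partial_2 u=(1\pm\alpha)Qe_2$ for some $Q\in\SO(2)$, i.e.\ $\partial_2 u_2=(1\pm\alpha)\cos\theta$ and $\partial_1 u_2=\sin\theta$.

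Concretely, one tests $\partial_2 u_2-1$ against a smooth tensor-product bump $\phi(x)=\psi(x_1)\chi(x_2)$ compactly supported in $Q$, integrates by parts in $x_2$ to arrive at $\int\phi(\partial_2 u_2-1)=-\int\partial_2\phi\,(u_2-x_2)$, substitutes the two expansions above, and performs a second integration by parts in $x_1$ on the $(\partial_1 u_2)^2$-term, which by the rotational identity is to leading order a linear function of $\partial_1 u_1-1$. By duality this yields $\|u_2-x_2\|_{L^1(Q)}\le c L\|\partial_1 u_1-1\|_{L^1(S)}+(\text{lower order})\le c\lambda L^{3/2}H^{-1/2}E^{1/2}+\cdots$. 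Inserted into the geometric inequality and optimized at $\lambda\sim(\eps/\alpha)^{1/5}L^{4/5}$, this produces $E \ge c\alpha^{6/5}\eps^{4/5}L^{1/5}H$. The main obstacle is the bookkeeping of the two integrations by parts, the boundary contributions of $\phi$, and the nonlinear $O((\partial_1 u_1-1)^2)$ and $O(d)$ correction terms: they must all be verified to be subleading, and in particular the weaker $\|\partial_1 u_2\|_{L^1}$-estimate (with its $E^{1/4}$-scaling) must not be allowed to re-enter at the leading order.
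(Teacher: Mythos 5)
Your high-level architecture is the right one and matches the paper's: localize with Lemma \ref{Lemma:Epart}, detect that the matrix $F$ of Lemma \ref{Lemma:Poincare} has $|F_{22}-1|\ge\alpha/2$ (or, in the complementary case, $|F_{12}|\ge\alpha/2$ --- note the split is on $F_{22}$/$F_{12}$, not $F_{21}$), run a test-function/integration-by-parts chain whose leading input is the strong average-zero bound $\|\partial_1u_1-1\|_{L^1(S)}\le c\lambda L^{1/2}H^{-1/2}E^{1/2}$ of Lemma \ref{lemmaaverage0}, and optimize at $\lambda\sim(\eps/\alpha)^{1/5}L^{4/5}$; your observation that routing everything through $\|\partial_1u_2\|_{L^1}$ only gives $\alpha^{8/5}\eps^{4/5}L^{1/5}H$ is also correct. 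However, the concrete transfer mechanism you propose does not close. Expanding $\partial_2u_2-1=\pm\alpha-\tfrac12(\partial_1u_2)^2+\dots$ reintroduces the unknown phase indicator $\sigma(x)=\pm1$: after pairing with your bump $\phi$, the term $\alpha\int\sigma\phi$ is of size up to $\alpha\lambda^2$, i.e.\ exactly the leading order you are trying to detect, and nothing in your proposal controls it --- controlling the local phase imbalance is the whole point, and in the paper it comes from the surface energy through the constant $F$: $\partial_2u_2$ is compared with $F_{22}$ on a good segment via (\ref{eqDuFI}), the diagonal entries are never expanded, and rotational invariance enters only through the exact \emph{linear} off-diagonal relation $G_{12}+(1+\sigma\alpha)G_{21}=0$ for the pointwise projection $G\in K_2$. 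That relation is what lets one trade $\partial_1u_2$ (produced from $u_2-x_2$ by the boundary datum at $x_1=0$) for $\partial_2u_1$, at the harmless cost $\alpha\|f'\|_\infty\|\partial_1u_2\|_{L^1}$ (the $E^{1/4}$ bound (\ref{eqK2d1u2}) enters only with the extra factor $\alpha$); a \emph{second} integration by parts in $x_2$ then converts $\partial_2u_1$ into $u_1-x_1$, and (\ref{eqK2d1u1}) finishes. Your quadratic identities are true but do not substitute for this step.

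The second gap is the ``by duality'' upgrade to $\|u_2-x_2\|_{L^1(Q)}\le cL\|\partial_1u_1-1\|_{L^1(S)}+(\text{lower order})$. A bound on the pairing with one fixed smooth bump (or with any family constrained to be compactly supported in $Q$ with $|\phi|,\lambda|\partial_2\phi|,\lambda^2|\partial_2^2\phi|\le c$, which is what your two integrations by parts require) does not control an $L^1$ norm; and an unconditional intermediate estimate of this strength is essentially equivalent to the final result, so it cannot be asserted without proof. The repair is structural and is exactly how the paper organizes the argument: since the quantity to be detected, $F_{22}-1$ (resp.\ $F_{12}$), is a \emph{constant} of modulus at least $\alpha/2$, a single fixed pairing suffices, and one should run the whole argument as one chain ending in an inequality of the form (\ref{eqk2alletermsad}), $\alpha\lambda\le c\lambda^2E/(\eps LH)+c\lambda^{-1}L^{3/2}H^{-1/2}E^{1/2}+c\alpha L^{3/4}H^{-1/4}E^{1/4}+\dots$, never passing through a smallness statement for $\|u_2-x_2\|_{L^1(Q)}$. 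Two smaller omissions: your case ``$|F_{12}|\ge\alpha/2$'' still needs its own (simpler) chain, with one integration by parts and (\ref{eqK2d1u1}); and to speak of $Du$ on a segment one should first reduce to smooth $u$ by mollification, as the paper does at the start of the proof.
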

\begin{proof}
Since the proof involves a treatment of the value of $Du$ on segments, to avoid a notationally complex treatment of traces we first use density to show that it suffices to treat smooth  functions.
Let $u\in\calM$, $\delta>0$.  We extend $u$ by $u(x)=x$ on $\R^2\setminus\Omega$.  Since $\Omega$ is convex, by scaling we can find $\tilde u\in\calM$ such that 
$u(x)=x$ in a neighbourhood of $\partial\Omega$ and
$E_2^\eps[\tilde u,\Omega]\le 
E_2^\eps[ u,\Omega]+\delta$.
% (one sets
%$\tilde u(x)=(1+\eta)^{-1}u( (1+\eta) x - \frac12\eta (L,H))$ and lets $\eta\to0$). 
For $\rho>0$, we 
let $u_\rho\in C^\infty(\R^2;\R^2)$ be a mollification of $\tilde u$ on a scale $\rho$. For sufficiently small $\rho$, one has $u_\rho\in\calM$.
At the same time, for $\rho\to0$, one has $|D^2u_\rho|(\Omega)\to |D^2\tilde u|(\Omega)$ and
$Du_\rho\to D\tilde u$ in $L^2(\Omega;\R^{2\times2})$. Therefore,
 there is $\rho_\delta>0$ such that
\begin{equation*}
 E_2^\eps[u_{\rho_\delta},\Omega] \le  E_2^\eps[\tilde u,\Omega]+\delta\le 
E_2^\eps[ u,\Omega]+2\delta\,.
\end{equation*}
We conclude that
\begin{equation*}
    \inf \{ E_2^\eps[u,\Omega]:u\in\calM\} =
    \inf \{ E_2^\eps[u,\Omega]:u\in\calM\cap C^\infty(\overline\Omega;\R^2)\} \,.
  \end{equation*}

Let now $u\in\calM\cap C^\infty(\overline\Omega;\R^2)$, this function 
will be fixed for the rest of the proof.
Let $\lambda\in(0,\min\{L,H\}]$, the precise value will be chosen below. 
We choose the stripes $S$ and $S'$ with Lemma \ref{Lemma:Epart} and set
$Q=S\cap S'$, $I=(s,s+\lambda)$.
Let $F$ be as in Lemma \ref{Lemma:Poincare}\ref{Lemma:Poincare1}.
By  (\ref{Poincare}) there is $x_1^*$ such that the segment 
 $I^*=\{x_1^*\}\times I\subset Q$ has the property
\begin{equation}\label{eqDuFI}
  \int_{I} |Du-F|(x_1^*,t)dt \le \frac{1}{\lambda} \int_Q |Du-F|dx \le
  c\frac{\lambda^2}{\eps LH} E + c\frac{\lambda E^{1/2}}{L^{1/2}H^{1/2}}\,.
\end{equation}

We exploit the boundary conditions, using again Lemma
\ref{lemmaaverage0}.
In particular, we set $v=\partial_1u$, $e=e_1$,
  $d=\dist(Du,K)$, $\omega=S$ and obtain, recalling (\ref{epart1}),
\begin{equation}\label{eqK2d1u1}
  \|\partial_1 u_1-1\|_{L^1(S)} \le
 c \lambda \frac{L^{1/2}E^{1/2}}{H^{1/2}}
\end{equation}
and
  \begin{equation*}
    \|\partial_1u_2\|_{L^1(S)}\le 
  c \lambda \frac{L^{3/4} E^{1/4}}{H^{1/4}}+ c \lambda 
\frac{L^{1/2} E^{1/2}}{H^{1/2}}\,.
  \end{equation*}
Since, in the case $E\ge HL$, there is nothing to prove, we can replace the last
estimate with the simpler one
  \begin{equation}\label{eqK2d1u2}
    \|\partial_1u_2\|_{L^1(S)}\le 
  c \lambda \frac{L^{3/4} E^{1/4}}{H^{1/4}}\,.
  \end{equation}
This will only be used in the estimate of the error term arising from the
linearization of $\SO(2)$, in the second line of (\ref{eqcase1k2a}). The
leading-order terms will be estimated with (\ref{eqK2d1u1}).

At this point, the proof starts to differ significantly from the one for
$K_1$.
We fix one test function $\varphi\in C^\infty_c([0,1];[0,1])$ with $\varphi=
1$ on $(1/4,3/4)$, and define $f\in C^\infty_c(I)$ by
 $f(\lambda t+s)=\varphi(t)$, so that
\begin{equation*}%\label{K2:Hilfsfunktion}
\norm{f}_{L^1(I)} \ge\frac\lambda2 , \quad \norm{f}_{L^{\infty}}=1, \quad
 \norm{f'}_{L^{\infty}}\le\frac{c}{\lambda},  \quad \norm{f''}_{L^\infty}\le\frac{c}{\lambda^2}\,.
\end{equation*}
We distinguish two cases, depending on the value of $F_{22}$. 

{\bf Case 1: $|F_{22}-1|\geq\alpha/2$}. 
With a triangular inequality and a partial integration we write
\begin{alignat}1 \nonumber
  \frac\alpha4\lambda\le \frac\alpha2\|f\|_{L^1(I)}
&\le \left| \int_I (F_{22}-1) f dx_2\right|\\  \nonumber
&\le \left| \int_I (F_{22}-\partial_2u_2) f dx_2\right|
+\left| \int_I (\partial_2u_2-1) f dx_2\right|\\ \nonumber
&\le  \int_I |Du-F| dx_2 
+\left| \int_I (u_2-x_2) f'(x_2) dx_2\right|\\
&\le  \int_I |Du-F| dx_2 
+\left| \int_{(0,x_1^*)\times I}\partial_1u_2(x) f'(x_2) dx\right|\,,
\label{eqcase1k2a}
\end{alignat}
where we used $\|f\|_{L^\infty}=1$ and in the last step the boundary data
 $u_2(0,x_2)=x_2$. In all integrals over $I$, the functions $u$ and $Du$ are
 evaluated at $(x_1^*,x_2)$.
At this point we trade $\partial_1 u_2$ with $\partial_2 u_1$ in the last term. To do this, we
use the structure of rotations.
First, we choose $G\in L^\infty(S;K)$ with 
$|Du-G|=\dist(Du,K)$ almost everywhere, then choose 
 $\sigma \in L^\infty(S;\{-1,1\})$ such that
$G\in\SO(2)\text{diag}(1,1+\sigma\alpha)$ almost everywhere. 
A direct computation shows that  $G_{12}+(1+\sigma\alpha)G_{21}=0$, and therefore
\begin{equation*}%\label{K2:HL:C1:2}
 (1+\sigma\alpha)\partial_1u_2+\partial_2u_1 =
 (1+\sigma\alpha)(\partial_1u_2-G_{21})+ (\partial_2u_1-G_{12}) \,.
\end{equation*}
We multiply this expression with $f'(x_2)$ and integrate it over $(0,x_1^*)\times
I$ to obtain
\begin{alignat}1 \nonumber
 \left|\int_{(0,x_1^*)\times I}\partial_1u_2(x)f'(x_2)\,dx \right|
\le&  
  \left|\int_{(0,x_1^*)\times I}\partial_2u_1(x)f'(x_2)\,dx \right|\\
&+ \alpha  \nonumber
\|f'\|_{L^\infty}\int_{(0,x_1^*)\times I} |\partial_1u_2(x)| dx \\
&+ 3\int_S |f'|(x_2)\, |Du-G|(x)\, dx\,.
\label{eqk2lonif1}
\end{alignat}
We treat the different terms separately. The term with $\partial_2 u_1$ is
integrated by parts and then estimated using (\ref{eqK2d1u1}),
\begin{alignat*}1
\left|\int_{(0,x_1^*)\times I}\partial_2u_1(x)f'(x_2)\,dx \right|&=
\left|\int_{(0,x_1^*)\times I}(u_1(x)-x_1)f''(x_2)\,dx \right|\\
&\le \|f''\|_{L^\infty} \|u_1-x_1\|_{L^1(S)} \\
&\le L \|f''\|_{L^\infty} \|\partial_1u_1-1\|_{L^1(S)} \\
&\le L \frac{c}{\lambda^2} 
 c \lambda \frac{L^{1/2}E^{1/2}}{H^{1/2}}
=  \frac{c}{\lambda} 
  \frac{L^{3/2}E^{1/2}}{H^{1/2}}\,.
\end{alignat*}
For the second term, we use  $ \|f'\|_{L^\infty}\le c/\lambda$
and (\ref{eqK2d1u2}) to obtain
\begin{alignat*}1
  \alpha \|f'\|_{L^\infty} \int_{(0,x_1^*)\times I} |\partial_1u_2|(s,x_2)\,dx
&\le
c\alpha \frac{L^{3/4} E^{1/4}}{H^{1/4}}\,.
\end{alignat*}
Finally, the last term  is bounded, using
(\ref{epart1}),
\begin{alignat*}1
  \int_S |f'| |Du-G| dx\le \|f'\|_{L^\infty} \int_S \dist(Du,K)dx \le c \frac{L^{1/2} E^{1/2}}{H^{1/2}}\,. 
\end{alignat*}
The estimate (\ref{eqcase1k2a}) becomes,
using (\ref{eqDuFI}), (\ref{eqk2lonif1}) and the three previous estimates,
\begin{equation}\label{eqk2alletermsad}
  \alpha\lambda \le
  c\frac{\lambda^2}{\eps LH} E + c\frac{\lambda E^{1/2}}{L^{1/2}H^{1/2}}
+ \frac{c}{\lambda}    \frac{L^{3/2}E^{1/2}}{H^{1/2}}
+ c\alpha \frac{L^{3/4} E^{1/4}}{H^{1/4}}
+ c \frac{L^{1/2} E^{1/2}}{H^{1/2}}  
\,.
\end{equation}
Since $\lambda\le L$,  the second term and the last one can be absorbed into the
third one, and we have
\begin{equation*}
  \alpha\lambda \le
  c\frac{\lambda^2}{\eps LH} E 
+ \frac{c}{\lambda}    \frac{L^{3/2}E^{1/2}}{H^{1/2}}
+ c\alpha \frac{L^{3/4} E^{1/4}}{H^{1/4}}
\end{equation*}
for all $\lambda\in (0,\min\{L,H\}]$. Therefore,
\begin{equation*}
  E\ge c \min\Bigl\{ \frac{\alpha\eps LH}{\lambda}, \frac{\alpha^2 \lambda^4 H}{L^3}, 
  \frac{\lambda^4 H}{L^3}\Bigr\}\,.
\end{equation*}
Since $\alpha\le 1$, the last term can be ignored. Finally, we choose $\lambda$
as
\begin{equation*}
  \lambda=\min\Bigl\{ \frac{\eps^{1/5} L^{4/5} }{\alpha^{1/5}}, L,H\Bigr\}
\end{equation*}
and obtain
\begin{equation*}
  E\ge c\min\Bigl\{  \alpha^{6/5} \eps^{4/5} L^{1/5} H, \alpha^2 LH, \frac{\alpha^2 H^5}{L^3}\Bigr\}\,.
\end{equation*}
By Lemma \ref{lemmathin}, we  have 
$E\ge c \min\{\alpha \eps(L+H), \alpha^2 LH\}$.  
Therefore,
\begin{equation*}
  E\ge c\min\Bigl\{  \alpha^{6/5}\eps^{4/5} L^{1/5} H+\alpha\eps(L+H), \alpha^2 LH,
  \frac{\alpha^2 H^5}{L^3}+\alpha\eps(L+H)\Bigr\} \,.
\end{equation*}
Since, by the arithmetic-geometric mean inequality,
\begin{equation*}
\eps^{4/5}
\alpha^{6/5} L^{1/5} H=
\left(\frac{\alpha^2 H^5}{L^3}\right)^{1/5}\left(\alpha\eps L\right)^{4/5}
\le\frac15\,\frac{\alpha^2 H^5}{L^3} +\frac45\,\alpha\eps L\,,
\end{equation*}
the last term is always larger than the first one and
can be dropped. Further, if 
$\eps\alpha H\ge\alpha^{6/5} \eps^{4/5} L^{1/5} H$, then 
$\eps\ge \alpha L$, and therefore the relevant energy bound is $\alpha^2 LH$. 
We conclude
\begin{equation*}
  E\ge c\min\Bigl\{\alpha^{6/5} \eps^{4/5}  L^{1/5} H+\alpha\eps L, \alpha^2 LH\Bigr\}\,,
\end{equation*}
and therefore the proof in this case.

{\bf Case 2: $|F_{22}-1|<\alpha/2$}. 
Since $|Fe_2|=1\pm \alpha$, in this case necessarily $|F_{12}|\ge \alpha/2$. 
Therefore, proceeding as above,
\begin{alignat*}1
  \frac14\alpha\lambda\le \frac12\alpha \|f\|_{L^1(I)}
&= \left| \int_I F_{12} f(x_2) dx_2\right|\\
&\le \left| \int_I (F_{12}-\partial_2u_1) f(x_2) dx_2\right|
+\left| \int_I \partial_2u_1 f(x_2) dx_2\right|\\
&\le  \int_I |Du-F| dx_2 
+\left| \int_I (u_1-x_1) f'(x_2) dx_2\right|\,.
\end{alignat*}
In turn, recalling (\ref{eqK2d1u1}),
\begin{equation*}
\left| \int_I (u_1-x_1) f'(x_2) dx_2\right|\le
\|f'\|_{L^\infty}  \int_{(0,x_1^*)\times I}|\partial_1 u_1-1|\, dx
\le c \frac{L^{1/2}E^{1/2}}{H^{1/2}}\,.
\end{equation*}
Using (\ref{eqDuFI}) we obtain also in this case  (\ref{eqk2alletermsad}),
which 
concludes the proof. 
\end{proof}

\section*{Acknowledgements}
We thank Barbara Zwicknagl for interesting discussions and comments on this work.
Most of the present results are part of the PhD thesis \cite{B9_ChanDiss}.
The first author would like to thank Ruth Joachimi for her encouraging support. 
This work was partially supported by the Deutsche Forschungsgemeinschaft
through SFB 611, Project B9.
%&\bibliographystyle{alpha}
%\bibliography{Bib-ChanConti} 
\def\polhk#1{\setbox0=\hbox{#1}{\ooalign{\hidewidth
  \lower1.5ex\hbox{`}\hidewidth\crcr\unhbox0}}}

\end{document}